\newtheorem{theorem}{Theorem}[section]   
\newtheorem{assumption}{Assumption}[section]
\newtheorem{corollary}{Corollary}[section]
\newtheorem{proposition}{Proposition}[section]
\newtheorem{lemma}{Lemma}[section]
\newtheorem{definition}{Definition}[section]
\newtheorem{remark}{Remark}[section]
\newtheorem{conjecture}{Conjucture}[section]
\newenvironment{hypothesis}{\hypothesisinner}{\endhypothesisinner}
\newcommand{\ZZ}{\mathbb{Z}} 
\newcommand{\RR}{\mathbb{R}} 
\newcommand{\Acal}{\mathcal{A}}
\newcommand{\Lcal}{\mathcal{L}}
\newcommand{\Mcal}{\mathcal{M}}
\def\Xint#1{\mathchoice
{\XXint\displaystyle\textstyle{#1}}%
{\XXint\textstyle\scriptstyle{#1}}%
{\XXint\scriptstyle\scriptscriptstyle{#1}}%
{\XXint\scriptscriptstyle\scriptscriptstyle{#1}}%
\!\int}
\def\XXint#1#2#3{{\setbox0=\hbox{$#1{#2#3}{\int}$ }
\vcenter{\hbox{$#2#3$ }}\kern-.6\wd0}}
\def\epsilon{\varepsilon}
\title{\textbf{\huge{Generalized principal eigenvalues of elliptic operators and spreading speeds of Fisher-KPP equations
in two-scale almost periodic media}}}
\author{Xing Liang \thanks{School of Mathematical Sciences, University of Science and Technology of China, Hefei, Anhui 230026, China,
\textit{Email address}: \texttt{xliang@ustc.edu.cn}} 
\quad Linfeng Xu \thanks{School of Mathematical Sciences, University of Science and Technology of China, Hefei, Anhui 230026, China,
\textit{Email address}: \texttt{xlf\_hzfy@mail.ustc.edu.cn}}
\quad Tao Zhou \thanks{Center for Pure Mathematics, School of Mathematical Science, Anhui University, Hefei, Anhui 230601, China,
\textit{Email address}: \texttt{tzhou910@ustc.edu.cn}}}
\date{} 
\begin{document}
\newcommand{\supercite}[1]{\textsuperscript{\cite{#1}}}
\maketitle

\setlength{\oddsidemargin}{ 1cm} 
\setlength{\evensidemargin}{\oddsidemargin}
\setlength{\textwidth}{13.50cm}
\vspace{-0.2cm}
\begin{center} 
\parbox{\textwidth}{ 
{\textbf{Abstract: }} This paper is concerned with the asymptotic behavior of the generalized principal eigenvalues
of elliptic operators and spreading speeds of Fisher-KPP equations in two-scale almost
periodic media where one scale is fixed and another one approaches zero or infinity.  
We transform the problem into the homogenization of certain effective Hamiltonian and then establish the asymptotic limits and the convergence rates.
Based on the analysis of the asymptotic behavior of effective Hamiltonians,  
we investigate how the heterogeneity of the advection and growth rates affect on the propagation in the case where the media has very rapid or slow spatial oscillation:
We show a normal scale perturbation of the growth rate with mean zero can accelerate the propagation in the media with rapid or slow oscillation; and an advection with slow oscillation and mean zero can decelerate the propagation in 1-D case.

{\textbf{Key words:}} \quad {Almost periodic media,  Fisher-KPP equation, Generalized principal eigenvalue, Effective Hamiltonian, Homogenization, Spreading speed}} 
\end{center}

\vspace{0.5cm}
%
%
%
%
%
%

\setlength{\oddsidemargin}{-.5cm} 
\setlength{\evensidemargin}{\oddsidemargin}
\setlength{\textwidth}{17.00cm}

\counterwithin*{equation}{section}
\renewcommand\theequation{\thesection.\arabic{equation}}

\section{Introduction}
In this paper, we mainly consider the spreading properties of the following reaction-diffusion equation
\begin{equation}\label{equation}
\begin{cases}
u_t - \sum\limits_{i,j=1}^{n}a_L^{ij}(x)D_{ij}u - \sum\limits_{i=1}^{n}b_L^i(x) D_iu = (c_L(x) + \tilde{c}(x))u(1-u), \\
u(0,x) = u_0(x)\in C_c(\RR^n,[0,1]), \ \not \equiv 0 .
\end{cases}
\end{equation}
We always consider $L>0$ as a scale parameter and 
$$a_L^{ij}(x) = a^{ij}(x/L), \quad b_L^i(x) = b^i(x/L), \quad c_L(x) = c(x/L), \quad i,j=1,\cdots,n.$$ 
Denote $A_L(x)=A(x/L)$ with $A(x) = (a^{ij}(x))_{n \times n}$ and $B_L(x)=B(x/L)$ with $B(x) = (b^i(x))_{i=1}^n$.
In the one-dimensional case,
we simplify the notation by dropping the superscript and denoting the coefficients as $a$ and $b$, respectively.

Throughout this paper, we assume that the following hypotheses hold:

\begin{hypothesis}\thlabel{uniformelliptic}
$A(x)$ is symmetric and 
\begin{equation*}
\alpha_m |\xi|^2 \leq \sum\limits_{i,j=1}^{n}a^{ij}(x)\xi_i\xi_j \leq \alpha_M |\xi|^2
\end{equation*}
for all $x,\xi \in \RR^n$ and  some $0 < \alpha_m \leq \alpha_M < \infty$.
\end{hypothesis} 

\begin{hypothesis}\thlabel{almostperiodicity}
$a^{ij},b^i,c$ and $\tilde{c}$ are in $C^{2,\gamma}(\RR^n)$ for some $\gamma \in (0,1)$ with bounded derivatives;  
Furthermore, $a^{ij},b^i,c$ and $\tilde{c}$ are almost periodic  in the sense of Bohr:
  
\begin{definition} \cite{bohr1925theorie}
A function $f: \RR^n \to \RR$ is almost periodic if it is the uniform limit of a sequence of trigonometric polynomials in $\RR^n$.
\end{definition}
\end{hypothesis}

There is another equivalent definition by means of the existence of a relatively dense set of $\epsilon$ almost-periods for all $\epsilon > 0$. That is,

\begin{definition}\cite{shubin1978almost}
A function $f: \RR^n \to \RR$ is almost periodic if it is continuous and for any $\epsilon > 0$,
$$\{x \in \RR^n :  \|f(\cdot + x) - f(\cdot)\|_{L^{\infty}(\RR^n)} < \epsilon\}$$
is relatively dense.
In other words, there exists $M > 0$ and $\{x_n\}_{n \in \ZZ} \subset \RR^n$ such that
$$\|f(\cdot + x_n) - f(\cdot)\|_{L^{\infty}(\RR^n)} < \epsilon$$
with
$$\mathop{\bigcup}\limits_{n \in \ZZ}B(x_n,M) = \RR^n,$$
where $B(x,r)$ denotes the ball centered at $x$ with radius $r$.
\end{definition}

The study of the spreading properties of \eqref{equation} started from the celebrated work of Fisher \cite{fisher1937wave} and Kolmogorov, Petrovsky and Piskunov \cite{kolmogorov1937investigation}. 
Subsequently, 
this model has been widely applied to study species invasion in ecology \cite{skellam1951random,murray2007mathematical,cantrell2004spatial,shigesada1997biological,kallen1985simple}.
A central question for such model concerns how the stable steady state $1$ invades the unstable state $0$.
In the homogeneous case, Aronson and Weinberger in \cite{aronson1978multidimensional} showed that if $B = 0$ and $c + \tilde{c} > 0$, then there exists $\omega^* > 0$ such that for any solution $u(t,x)$ of \eqref{equation}, 
$$\left\{\begin{array}{ll}
\lim\limits_{t \to +\infty}\sup\limits_{|x| \leq \omega t}|u(t,x) - 1| = 0  \quad &\hbox{ for all } \omega \in (0,\omega^*), \\
\lim\limits_{t \to +\infty}\sup\limits_{|x| \geq \omega t}|u(t,x)| = 0 \quad &\hbox{ for all } \omega > \omega^*.
\end{array}\right.$$
This implies that for any direction $e \in \mathbb{S}^{n-1}$, 
an observer moving with speed $\omega > \omega^*$ will eventually observe the unstable state,
whereas if moving with speed $\omega \in (0,\omega^*)$,
they will encounter the stable state.
Therefore, these results characterize the spatial-temporal dynamics of the invasion and we prefer to call $\omega^*$ the {\bf spreading speed in direction $e$}.

Equation \eqref{equation} is a natural extension of the classical Fisher-KPP model to heterogeneous environments.
A large body of literature has been devoted to understanding the effect of the spatial heterogeneity on the spreading properties of the solutions.
In a heterogeneous medium,
the spreading speed generally depends on the direction of propagation.
In periodic settings under certain conditions on the coefficients,
the existence of an asymptotic spreading speed $\omega^*(e) > 0$ for each direction $e \in \mathbb{S}^{n-1}$ has been established \cite{berestycki2008asymptotic,gartner1979propagation,weinberger2002spreading,freidlin2020wavefront}. 
These speeds are defined by the following property:
for any compactly supported initial datum 
$u_0 \not \equiv 0$ with $0 \leq u_0 \leq 1$,
the solution satisfies 
\begin{equation*}
\begin{cases}
\liminf\limits_{t \to +\infty}u(t,x + \omega t e) = 1 \quad 
&\hbox{ if } 0 \leq \omega < \omega^*(e), \\
\lim\limits_{t \to +\infty}u(t,x + \omega t e) = 0 \quad 
&\hbox{ if } \omega > \omega^*(e).
\end{cases}
\end{equation*}
locally in $x \in \RR^n$.
\cite{nadin2009traveling,weinberger2002spreading} investigate those solutions with front-like initial data.
For general heterogeneous media, we refer to  \cite{berestycki2008asymptotic,berestycki2012spreading}.

One of our aims is to investigate the asymptotic behavior of the spreading speeds as $L \to +\infty$ and $L \to 0$, respectively.
In the periodic case, the asymptotic behavior of the spreading speeds of \eqref{equation} as $L \to +\infty$ was established by Hamel \textit{et al} \cite{hamel2010spreading,hamel2011viscosity} when $\tilde{c} = 0$. 
By using the Hopf-Cole transformation of the principal eigenfunctions,
the problem is transformed into one about the limit behavior of effective Hamiltonian.
This is essentially a vanishing viscosity method.
In the almost periodic case,
however,
classical eigenfunctions do not exist in general.
We therefore introduce a class of functions
and use them to analyze the asymptotic behavior of the principal eigenvalues.

The asymptotic behavior when $L \to 0^+$ was investigated in much literature \cite{smaily2009homogenization,el2008pulsating,nadin2010effect}. 
For example, Smaily, Hamel and Roques \cite{smaily2009homogenization} established the homogenization phenomenon of the equation \eqref{equation} in symmetric divergence form with $\tilde{c} = 0$ . 
In one dimension, 
they proved that 
the spreading speed converges to $2\sqrt{<a>_H<c>}$ as $L \to 0^+$, 
where $<a>_H$ is the harmonic mean of $a$ and $<c>$ is the arithmetic mean of $c$,
respectively. 
They also investigated the asymptotic behavior of the spreading speeds in higher dimension for some special cases.
Subsequently,
Nadin \cite{nadin2010effect} employed a novel characterization of the periodic principal eigenvalue to derive an analogous result for arbitrary spatial dimensions.

The aforementioned results all rely on characterizing the spreading speeds in terms of the eigenvalues of the linearized operator near the unstable steady state $u \equiv 0$. 
More precisely, omitting the scale change, let $L=1$ and $\tilde{c} = 0$ and define a formal elliptic operator
\begin{equation}\label{LinOpe}
\Lcal \phi := \sum\limits_{i,j=1}^{n}a^{ij}D_{ij}\phi 
+ \sum\limits_{i=1}^{n}b^iD_i\phi 
+ c\phi \quad \hbox{ for all } \phi \in C^2(\RR^n).
\end{equation}
For any $p \in \RR^n$, define
\begin{equation}
\Lcal(p) \phi := e^{p \cdot x}\Lcal(e^{-p \cdot x}\phi) \quad \hbox{ for all } \phi \in C^2(\RR^n).
\end{equation}
Here, notice that when $p=0$, the zero vector, $\Lcal (0)$ has a identical form with $\Lcal$.
In the periodic case, where $a^{ij},b^i,c$ are assumed to be $\ZZ^n$-periodic, 
denote by $\lambda(p)$ the corresponding periodic principal eigenvalue of $\Lcal(p)$. 
That is
\begin{equation}\label{cellproblem}
\left\{\begin{array}{l}
\Lcal(p) \phi = \lambda(p)\phi, \\
\phi > 0, \ \phi \hbox{ is } \ZZ^n\hbox{-periodic}.
\end{array}\right.
\end{equation}
The existence and uniqueness of $\lambda(p)$ is promised by the Krein-Rutman theory.
Then under the condition $c > 0$, the spreading speed in direction $e$ is
\begin{equation}\label{speed}
\omega(e) = \inf_{p \cdot e > 0}\frac{\lambda(p)}{p \cdot e}, 
\end{equation}
see \cite{berestycki2008asymptotic,freidlin2020wavefront,gartner1979propagation,weinberger2002spreading}.

In almost periodic settings, 
as noted earlier, 
the operator $\Lcal(p)$ generally may not possess a positive eigenfunction within the class of almost periodic functions \cite{bjerklov2006positive,sorets1991positive}. 
To overcome this difficulty, Berestycki and Nadin \cite{berestycki2012spreading,berestycki2019asymptotic} introduce a new tool, {\bf the generalized principal eigenvalue of $\Lcal(p)$}, 
which is defined as following
\begin{equation}\label{lowerGPE}
\underline{\lambda_1}(p) := \sup \{\lambda : \exists \phi \in \Acal \hbox{ such that } \Lcal(p)\phi \geq \lambda \phi \hbox{ in } \RR^n\},
\end{equation}
\begin{equation}\label{upperGPE}
\overline{\lambda_1}(p) := \inf \{\lambda : \exists \phi \in \Acal \hbox{ such that } \Lcal(p)\phi \leq \lambda \phi \hbox{ in } \RR^n\},
\end{equation}
where
$$\Acal := \{\phi \in C^2(\RR^n) \cap W^{1,\infty}(\RR^n) :  \inf_{\RR^n} \phi > 0\}.$$
Using a result from Lions and Souganidis \cite{lions2005homogenization}, 
they proved the coincidence of these two quantities in the almost periodic case 
and thus we could define
\begin{equation}\label{GPE}
\lambda(p) := \underline{\lambda_1}(p) = \overline{\lambda_1}(p).
\end{equation}
Under the hypothesis
\begin{equation}\label{spreadingoccur}
\liminf\limits_{|x| \to +\infty}(4c(x)\min\limits_{e \in \mathbb{S}^{n-1}}(eA(x)e) - |B(x)|^2) > 0,
\end{equation}
they proved that the spreading speed of \eqref{equation} in direction $e$ is given by the same formula as in \eqref{speed}. 
The condition \eqref{spreadingoccur} was released to
\begin{equation}\label{positivereaction}
\inf_{x \in \RR}c(x) > 0.
\end{equation}
by Liang and Zhou \cite{liang2022propagation} in dimension $1$.

In conclusion, 
we are suggested to focus on the asymptotic behavior of the generalized principal eigenvalues as $L$ tends to infinity and zero respectively.
As will be shown,
The asymptotic behavior of the generalized principal eigenvalues in our problems leads to two distinct homogenization problem:
the $L \to \infty$ limit corresponds to the viscous Hamilton-Jacobi equations,
while the $L \to 0$ limit corresponds to the second-order case. 
Our analysis employs the doubling variable technique,
which provides a powerful framework for handling problems on unbounded domains.
Moreover, by deriving appropriate estimates, 
we are able to characterize the associated convergence rates.
While convergence rates have been studied for decades,
with some optimal results established in the periodic settings
\cite{armstrong2014error,qian2024optimal,tran2021hamilton,shen2015convergence,capuzzo2001rate},
the almost periodic case remains largely open.
To address this,
we introduce a quantitative measure of almost periodicity, 
following \cite{armstrong2014error,shen2015convergence},
which enables the derivation of convergence rates in this more general context.
As a corollary of our analysis,
we find that the generalized principal eigenvalues, $\lambda(p)$'s, are smooth in $p$,
which implies the redundancy of the conditions in \cite{nadin2017generalized}.

Beyond the results on the convergence as $L\to 0,+\infty$,  
this paper also aims to understand how the drift $B_L$ and the birth rate $\tilde{c}$ influence propagation at extreme scales.
Based on the analysis of the asymptotic behavior of effective Hamiltonians,  we show  a normal scale perturbation $\tilde{c}\not=0$ of the growth rate with mean zero,  strictly accelerates the propagation in the media with rapid or slow oscillation.
This directly stems from the homogenization process, where the rapidly and slowly varying variables decoupling, resulting in a situation where both variables exhibit heterogeneity relative to each other. 
Furthermore,
in the one-dimensional case with constant coefficients $a,c,\tilde{c}$,
we prove that a mean zero $b_L \not \equiv 0$ strictly decelerates propagation when $L$ is large enough.

This paper is organized as follows. 
In the next section, we recall the definition of the effective Hamiltonian and its connection to the generalized principal eigenvalues. 
We also review key properties of the effective Hamiltonian and state the main results of the paper. 
These include the asymptotic limits of the spreading speeds in arbitrary directions, the corresponding convergence rates, and several related findings. 
The proofs of these results are detailed in the subsequent sections.

\section{Preliminary and main results}

\subsection{The effective Hamiltonian}
In this subsection, 
we revisit the principal eigenvalue problem from the perspective of effective Hamiltonian theory,
by working exclusively with its Hopf-Cole transform.
More precisely,
define an elliptic operator
\begin{equation}\label{operator}
\begin{array}{l}\Lcal \phi:=
\sum\limits_{i,j=1}^{n}a_*^{ij}D_{ij}\phi 
+ \sum\limits_{i=1}^{n}b_*^i D_i\phi 
+ c_*\phi \quad  
\hbox{ in } \RR^n.\end{array}
\end{equation}

We say that $\Lcal$ is {\bf uniformly elliptic and almost periodic} provided that the coefficients of $\Lcal$ satisfies \thref{uniformelliptic} and \thref{almostperiodicity}.

We define the Hamiltonian associated with the operator \eqref{operator} as
\begin{equation}\label{Hamiltonian}
H : \RR^n \times \RR^n \times \Mcal^n \to \RR, 
(x,p,X) \mapsto \sum\limits_{i,j=1}^{n}a_*^{ij}(x)X_{ij} 
+ \sum\limits_{i,j=1}^{n}a_*^{ij}(x)p_ip_j 
+ \sum\limits_{i=1}^{n}b_*^i(x)p_i 
+ c_*(x),
\end{equation}  
where $\Mcal^n$ is the set of all $n \times n$ real matrix.
This formulation arises naturally from seeking a principal eigenpair $(\lambda,\phi)$ of \eqref{operator}.
Applying the Hopf-Cole transformation $\psi = \ln \phi$ leads to the equation
$H(x,D\psi,D^2\psi) = \lambda$.
Consequently, the problem of determining the principal eigenvalue is equivalent to solving this equation.

In the almost periodic case, we can still apply the Hamiltonian $H$ to consider the problem of the principal eigenvalue. 
However,
since a positive almost periodic eigenfunction may not exist for $\Lcal$,  
we instead seek a constant $\lambda$ such that for any $\delta > 0$, 
the approximation problem
\begin{equation}\label{deltacorrector}
\lambda-\delta \leq H(x,Dw,D^2w) \leq \lambda + \delta
\end{equation}
admits a bounded, Lipschitz continuous viscosity solution $w$. 
The constant $\lambda$ is so-called the effective Hamiltonian of the above problem:
\begin{definition}
Let $H$ be defined as \eqref{Hamiltonian}. 
If there exists $\lambda$ such that for any $\delta > 0$,
\eqref{deltacorrector} admits a bounded and Lipschitz continuous viscosity solution,
then we say the constant $\lambda$ is \textbf{the effective Hamiltonian} of the  problem
\begin{equation}\label{effectiveHamiltonian}
H(x,Dw,D^2w) = \lambda,
\end{equation}
and say $w$ is \textbf{a $\delta-$approximate corrector} of the problem \eqref{effectiveHamiltonian}.
\end{definition}
Lions and Souganidis \cite{lions2005homogenization} have proved the following proposition:
\begin{proposition}\cite{lions2005homogenization} 
Assume that $\Lcal$ is uniformly elliptic and almost periodic. 
Let $H$ be defined as \eqref{Hamiltonian}.  
Then the effective Hamiltonian $\lambda$ of $H(x,Dw,D^2w) = \lambda$ exists and is unique.
\end{proposition}

Following Lions and Souganidis \cite{lions2005homogenization},
for each $\epsilon > 0$ we consider the approximation problem
\begin{equation}\label{correc}
H(x,Du_\epsilon,D^2u_\epsilon) = \epsilon u_{\epsilon}.
\end{equation}
\begin{proposition}\thlabel{appro}\cite{lions2005homogenization,lions2005homogenization2}
For any $\epsilon > 0$, 
\eqref{correc} admits a unique solution $u_\epsilon$ that is bounded and Lipschitz continuous.

Moreover, one also has estimates
\begin{equation}\label{bound}
\|D^2u_{\epsilon}\|_{L^{\infty}} + \|Du_{\epsilon}\|_{L^{\infty}} + \|\epsilon u_{\epsilon}\|_{L^{\infty}} \leq C
\end{equation}
for some $C$ depending on $\|a_*^{ij}\|_{L^{\infty}},\|b_*^i\|_{L^{\infty}},\|c_*\|_{L^{\infty}}$  and $\alpha_m,\alpha_M$.

Finally, $\lim\limits_{\epsilon\to 0}\|\epsilon u_{\epsilon}-\lambda\|_{L^\infty}=0$, where $\lambda$ is the effective Hamiltonian of $H(x,Du,D^2u) =\lambda$.
\end{proposition}
This result will be used repeatedly in the sequel.
Back to the generalized principal eigenvalues of $\Lcal$,
we have

\begin{proposition}\thlabel{GPEandH}\cite{berestycki2019asymptotic}
Assume that $\Lcal$ is {uniformly elliptic and almost periodic }. Let $H$ be defined as \eqref{Hamiltonian}.
The generalized principal eigenvalue $\lambda$ of $\Lcal $ defined by \eqref{lowerGPE}-\eqref{GPE},
is well-defined and coincides with the effective Hamiltonian of \eqref{effectiveHamiltonian}.
\end{proposition}

Let us gather some properties of the generalized principal eigenvalues.

\begin{proposition}\cite{lions2005homogenization2}\thlabel{characterization of effective Ham}
Let $H$ be defined as \eqref{Hamiltonian}.
Assume $\lambda$ is the effective Hamiltonian of \eqref{effectiveHamiltonian}. Then for any $v$ that is bounded and Lipschitz continuous, we have
\begin{equation}
\inf_{x \in \RR^n}H(x,Dv,D^2v) \leq \lambda \leq \sup_{x \in \RR^n}H(x,Dv,D^2v)
\end{equation}
in viscosity sense.
\end{proposition}

Now for any $p\in \RR^n$,
define the operator:
\begin{equation*}
\begin{aligned}
\Lcal(p) \phi 
&:= e^{p\cdot x}\Lcal(e^{-p\cdot x}\phi) \\
&= \sum\limits_{i,j=1}^{n}a^{ij}_*D_{ij}\phi
+ \sum\limits_{i=1}^{n}(b_*^i - 2\sum_{j=1}^{n}a^{ij}_*p_j)D_i\phi
+ (\sum\limits_{i,j=1}^{n}a_*^{ij}p_ip_j - \sum\limits_{i=1}^{n}b^i_*p_i + c_*)\phi.
\end{aligned}
\end{equation*}
it is straightforward to verify that $\Lcal(p)$ remains uniformly elliptic and almost periodic.
Consequently,
by the preceding theory,
it admits a generalized principal eigenvalue,
which we denote by $\lambda(p)$.
The following proposition presents the relation between $\lambda(p)$ and $H$.

\begin{proposition}\thlabel{Lp}
For any $p\in \RR^n$,
$\lambda(p)$ is the effective Hamiltonian of $H(x,Dw-p,D^2w) = \lambda(p)$.
\end{proposition}

\begin{proof}
One has
\begin{equation*}
\begin{aligned}
H(x,q-p,X) 
&= \sum_{i,j=1}^{n}a_*^{ij}X_{ij} + \sum_{i,j=1}^{n}a_*^{ij}(q_i - p_i)(q_j - p_j) + \sum_{i=1}^{n}b^i_*(q_i - p_i) + c_* \\
&=  \sum_{i,j=1}^{n}a_*^{ij}X_{ij} 
+ \sum_{i,j=1}^{n}a_*^{ij}q_iq_j \\
&\quad + \sum_{i=1}^{n}(b^i_* - 2\sum_{j=1}^{n}a^{ij}_*p_j)q_i 
+ \sum_{i,j=1}^{n}a^{ij}_*p_ip_j - \sum_{i=1}^{n}b^i_*p_i + c_*, \\
\end{aligned}
\end{equation*}
which is the corresponding Hamiltonian of $\Lcal(p)$.
\end{proof}

\begin{proposition}\thlabel{convexity}
$p \mapsto \lambda(p)$ is convex and smooth.
\end{proposition}
The convexity is from \cite{berestycki2019asymptotic}. 
The smoothness is proved in \thref{smoothness of the effective Hamiltonian} in this paper.

To conclude this subsection, 
we introduce the effective Hamiltonian associated with a first-order problem.

\begin{assumption}\thlabel{Ishii}  
Suppose $\overline{H} \in C(\RR^n \times \RR^n)$ satisfies the following hypotheses:

\noindent(1) $\lim\limits_{R \to +\infty}\inf\limits_{x \in \RR^n}\{\overline{H}(x,p): |p| \geq R\} = \infty$. 

\noindent(2) $\overline{H}$ is locally Lipschitz continuous in $p$ uniformly in $x$.

\noindent(3) $\overline{H}$ is bounded in $\RR^n \times B(0,R)$ for all $R > 0$.

\noindent(4) $\overline{H}$ is convex in $p$.

\noindent(5) $\overline{H}$ is almost periodic in $x$ locally uniformly in $p$.
\end{assumption}
Then Ishii \cite{ishii2000almost} has proved 
\begin{proposition} \cite{ishii2000almost}
Assume $\overline{H}$ satisfies \thref{Ishii}.	
Then for any $p \in \RR^n$,
one could also find a unique $\lambda(p)$ such that for any $\delta > 0$,
the problem
\begin{equation}\label{effecitveHamiltonianIshii}
\lambda(p) - \delta \leq \overline{H}(x,Dw+p) \leq \lambda(p) + \delta
\end{equation} 
admits a solution $w$ that is bounded and uniformly continuous.

Furthermore, for any $v$ that is bounded and Lipschitz continuous, we have
\begin{equation}
\inf_{x \in \RR^n}\overline{H}(x,Dv+p) \leq \lambda(p) \leq \sup_{x \in \RR^n}\overline{H}(x,Dv+p)
\end{equation}
in viscosity sense. 

Finally, $\lambda(p)$ is convex in $p$. 

{We call $\lambda(p)$ the effective Hamiltonian of $\overline{H}(x,Dv+p) =\lambda(p)$.  }

\end{proposition}

\subsection{Main results}

In the rest part of this paper, we assume that 
$$\inf_{x \in \RR^n}c(x) + \inf_{x \in \RR^n}\tilde{c}(x) > 0,$$
and
\begin{equation}\label{spreading occurs}
\sup_{x \in \RR^n}|B(x)|^2 < 4 \alpha_m( \inf_{x \in \RR^n}c(x) + \inf_{x \in \RR^n}\tilde{c}(x))
\end{equation} 
as in \cite{berestycki2012spreading,berestycki2019asymptotic}, 
where condition \eqref{spreading occurs} promises the spreading will occur in every direction.
For any $e \in \mathbb{S}^{n-1}$,
the spreading speed along the direction $e$,
$\omega(e)$, defined by
\begin{equation*}
\begin{cases}
\liminf\limits_{t \to +\infty}u(t,x + \omega t e) = 1 \quad 
\hbox{ if } 0 \leq \omega < \omega^*(e), \\
\lim\limits_{t \to +\infty}u(t,x + \omega t e) = 0 \quad 
\hbox{ if } \omega > \omega^*(e).
\end{cases}
\end{equation*}
exists thanks to \cite{berestycki2019asymptotic}.
Their work also established that these spreading speeds are given by the generalized principal eigenvalues of the linearized operator:
For any $L > 0$,
define
\begin{equation*}
\Lcal(L) \phi 
:= \sum_{i,j=1}^{n}a^{ij}_LD_{ij}\phi 
+ \sum_{i=1}^{n}b^i_L D_i\phi 
+ (c_L + \tilde{c})\phi.
\end{equation*}
For any $p \in \RR^n$,
define
\begin{equation*}
\Lcal(L,p) \phi := e^{p\cdot x}\Lcal(L)(e^{-p\cdot x}\phi),
\end{equation*}
and denote $\lambda(L,p)$ the corresponding generalized principal eigenvalue.
Then we have
\begin{theorem}\cite{berestycki2019asymptotic}
For any $e \in \mathbb{S}^{n-1}$ and $L > 0$, 
the spreading speed of \eqref{equation} in direction $e$, 
write $\omega(e;L)$, 
always exists. 
Moreover,
$$\omega(e;L) = \inf\limits_{p \cdot e > 0}\frac{\lambda(L,p)}{p \cdot e}.$$
\end{theorem}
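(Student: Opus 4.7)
The plan is to follow the standard Freidlin-G\"artner scheme, adapted to the almost periodic setting, by proving two matching inequalities for $\omega(e;L)$. The key mechanism is the KPP linearization: near the unstable state $0$, equation \eqref{equation} is controlled by $\Lcal(L)$, so propagation is driven by the exponential growth rates of the linearization, which are exactly the generalized principal eigenvalues $\lambda(L,p)$. The fact that in the almost periodic regime $\underline{\lambda_1}(L,p) = \overline{\lambda_1}(L,p) = \lambda(L,p)$, supplied by Lions-Souganidis, is what allows the sub- and supersolution bounds to be compared without leaving a gap and so permits the variational formula to close.

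For the upper bound, fix $e$, $p$ with $p \cdot e > 0$, and $\epsilon > 0$. By the definition \eqref{upperGPE} pick $\phi \in \Acal$ with $\Lcal(L,p)\phi \leq (\lambda(L,p)+\epsilon)\phi$, and set $\psi(x) = e^{-p \cdot x}\phi(x)$, so that $\Lcal(L)\psi \leq (\lambda(L,p)+\epsilon)\psi$. Since $\inf_{\RR^n}\phi > 0$ and $u_0$ is compactly supported, choose $M$ with $u_0 \leq M\psi$. The KPP structure then makes $\bar u(t,x) = \min\{1, M e^{(\lambda(L,p)+\epsilon)t}\psi(x)\}$ a supersolution of \eqref{equation}, and the parabolic comparison principle gives a pointwise bound on $u$ that decays to $0$ locally uniformly in any frame moving at speed $\omega > (\lambda(L,p)+\epsilon)/(p \cdot e)$. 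Sending $\epsilon \to 0$ and infimizing over admissible $p$ produces $\omega(e;L) \leq \inf_{p \cdot e > 0}\lambda(L,p)/(p \cdot e)$.

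For the lower bound, fix $\omega$ strictly below this infimum. The aim is to show $\liminf_{t \to \infty}u(t, x+\omega t e) = 1$ locally uniformly in $x$, from which both the existence of $\omega(e;L)$ and the formula follow. Starting from $\underline{\lambda_1}(L,p) = \lambda(L,p)$, extract $\phi \in \Acal$ with $\Lcal(L,p)\phi \geq (\lambda(L,p)-\epsilon)\phi$, truncate with a smooth spatial cutoff, and use the exponential weight $e^{-p \cdot x}$ to dominate the boundary errors coming from the cutoff; this produces a compactly supported subsolution of the linearized equation on a large moving cylinder. A parabolic Harnack iteration combined with $\inf c, \inf \tilde c > 0$ then lifts this into a uniform lower bound in the moving frame, and the KPP dynamics drive the solution up to $1$ on compacta, giving the matching inequality after letting $\epsilon \to 0$ and sweeping over $p$.

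The main obstacle is precisely this subsolution construction. In periodic media the genuine principal eigenfunction is bounded above and below on the whole space, so localization is essentially algebraic; in almost periodic media the $\phi \in \Acal$ supplied by the generalized eigenvalue need not be almost periodic, and the eigenvalue equation $\Lcal(L,p)\phi = \lambda(L,p)\phi$ typically has no almost periodic solution. One must therefore work with strict approximants, control cutoff errors uniformly in $x$, and invoke $\underline{\lambda_1} = \overline{\lambda_1}$ at the very end so that sending $\epsilon \to 0$ and matching the upper bound actually closes. Condition \eqref{spreading occurs}, which in particular forces $\lambda(L,p) > 0$ throughout the relevant range of $p$, is used precisely to ensure that the infimum in the variational formula sits in a regime where the subsolution construction is stable.
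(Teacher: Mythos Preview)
The paper does not prove this theorem at all: it is stated with the citation \cite{berestycki2019asymptotic} and then used as a black box throughout, so there is no ``paper's own proof'' to compare against. Your proposal is therefore not competing with anything in the paper itself but rather sketching the argument of Berestycki--Nadin.

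As a sketch of that argument, your upper bound is correct and standard: the KPP supersolution $\min\{1, M e^{(\lambda(L,p)+\epsilon)t}e^{-p\cdot x}\phi(x)\}$ built from an element of $\Acal$ realizing $\overline{\lambda_1}(L,p)+\epsilon$ works exactly as you describe. The lower bound, however, is where your sketch becomes hand-wavy. The sentence ``truncate with a smooth spatial cutoff, and use the exponential weight $e^{-p\cdot x}$ to dominate the boundary errors coming from the cutoff'' hides the real work: the cutoff introduces terms of size $|D\chi|\cdot|D\phi|$ and $|D^2\chi|\cdot\phi$ that are \emph{not} automatically dominated by $e^{-p\cdot x}$ unless the cutoff is placed far out in the direction $e$, and even then one needs uniform control on $\phi$ and $D\phi$ across translates, which is exactly what almost periodicity fails to supply directly. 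Berestycki--Nadin's actual route in \cite{berestycki2019asymptotic} goes through a Hamilton--Jacobi homogenization/large-deviation argument (Evans--Souganidis style) rather than a direct compact subsolution construction, precisely because the latter is delicate in non-periodic media. Your identification of the obstacle is accurate, but the resolution you gesture at (``parabolic Harnack iteration combined with $\inf c,\inf\tilde c>0$'') is not how the cited reference closes the gap.
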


Here
we would like to introduce a two-scale Hamiltonian:
\begin{equation}\label{2-scaleH}
\tilde H(x,y,p,X) = \sum\limits_{i,j=1}^{n}a^{ij}(y)X_{ij} 
+ \sum\limits_{i,j=1}^{n}a^{ij}(y)p_ip_j 
+ \sum\limits_{i=1}^{n}b^i(y)p_i 
+ c(y) + \tilde{c}(x).
\end{equation}
Then by a direction computation and \thref{Lp},
we see $\lambda(L,p)$ is the effective Hamiltonian of 
\begin{equation}
\tilde H(x,\frac{x}{L},Du-p,D^2u) = \lambda.
\end{equation}

Throughout the remainder of this paper, 
the constant $C$ may vary from line to line.
We now present the results in the small-scale limit,
beginning with the asymptotic limits of the generalized principal eigenvalues.

\begin{theorem}\thlabel{maintheoremzero}
For any $p \in \RR^n$, the limit
$$\overline\lambda(p) = \lim\limits_{L \to 0^+}\lambda(L,p)$$
exists and it is the effective Hamiltonian of the following problem
\begin{equation}\label{zeroeigenvalues}
\begin{aligned}
\sum\limits_{i,j=1}^{n}\iota(a^{ij})D_{ij}u 
+ \sum\limits_{i,j=1}^{n}\iota(a^{ij})(D_iu-p_i)(D_ju-p_j)
+ \sum\limits_{i=1}^{n}\iota(b^i)(D_iu-p_i) 
+ \iota(c) + \tilde{c} = \overline\lambda(p),
\end{aligned}
\end{equation}
where $\iota$ is a functional depending only on $A$ maps the set of almost periodic functions to $\RR$.
Furthermore, 
the convergence $\lambda(L,p) \to \overline \lambda(p)$ as $L \to 0^+$ is locally uniformly with respect to $p \in \RR^n$.
\end{theorem}

\begin{remark}
The functional $\iota$ will be explicitly explained by \thref{iota}.
\end{remark}

\begin{remark}
As established earlier, from the perspective of principal eigenvalue problems, the generalized principal eigenvalue $\lambda(L,p)$ converges formally to that of the following problem:
\begin{equation*}
\begin{cases}
\overline{\Lcal}(p)\phi 
:= e^{p \cdot x} \overline{\Lcal}(e^{-p \cdot x}\phi)
=\overline \lambda(p) \phi, \\
\phi > 0, \quad 
\phi \hbox{ is bounded},
\end{cases}
\end{equation*}
where
\begin{equation*}
\overline{\Lcal} \phi :=
\sum\limits_{i,j=1}^{n}\iota(a^{ij})D_{ij}\phi 
+ \sum\limits_{i=1}^{n}\iota(b^i)D_i\phi
+ \iota(c) + \tilde{c}. 
\end{equation*}
\end{remark}

For the spreading speeds,
we have

\begin{theorem}\thlabel{zero limit}
$\omega(e;L)$ converges as $L \to 0$ for any $e \in \mathbb{S}^{n-1}$.
Moreover,
the limit is characterized by
\begin{equation*}
\lim\limits_{L \to 0} \omega(e;L) = \inf_{p \cdot e > 0} \frac{\overline\lambda(p)}{p \cdot e}.
\end{equation*}
\end{theorem}

As a corollary,
we have
\begin{corollary}
The limits of the spreading speeds of \eqref{equation} as $L \to 0$ are coincident with the speeds of the following {\bf homogenized Fisher-KPP equation} of \eqref{equation}
\begin{equation*}
\begin{cases}
u_t - \sum\limits_{i,j=1}^{n}\iota(a^{ij})D_{ij}u 
- \sum\limits_{i=1}^{n}\iota(b^i)D_iu = (\iota(c) + \tilde{c}(x))u(1-u), \\
u(0,x) = u_0(x),
\end{cases}
\end{equation*}
where $u_0 \in C_c(\RR^n, [0,1])$ and $u_0 \not \equiv 0$. 
\end{corollary}

In the limit as $L \to 0$ and for $\tilde{c} = 0$,
we have:
\begin{corollary}\thlabel{zeroexpression}
If $\tilde{c} = 0$, then
$$\overline\lambda(p) = \sum\limits_{i,j=1}^{n}\iota(a^{ij})p_ip_j 
- \sum\limits_{i=1}^{n}\iota(b^i)p_i 
+ \iota(c).$$
\end{corollary}
In one dimension,
we obtain an explicit characterization of both $\iota$ and the homogenized Fisher-KPP equation:
\begin{proposition}\thlabel{iota expression}
When $n=1$,
we have
\begin{equation}\label{iotaexpression}
\iota(f) =<a>_H<a^{-1}f> 
\end{equation}
where $<f>_H:=<f^{-1}>^{-1}$ and 
\begin{equation*}
<f> := \Xint{-}f(x)dx = \lim\limits_{R \to +\infty} \frac{1}{|B(0,R)|}\int_{B(0,R)}f(x)dx,
\end{equation*}
the latter is well-defined for any almost periodic functions by \cite{bochner1927beitrage}.

Therefore, the  {\bf homogenized Fisher-KPP equation} of  \eqref{equation}
is 
\begin{equation}
u_t=<a>_Hu_{xx}+<a>_H<a^{-1}b> u_x+( <a>_H<a^{-1}c> + \tilde{c} )u(1-u). 
\end{equation}
\end{proposition}

\begin{remark}
A key distinction arises between our non-divergence form model and the divergence form case studied in \cite{smaily2009homogenization}.
Specifically, \cite{smaily2009homogenization} considers the equation $u_t = (a(x/L)u_x)_x + c(x/L)u(1-u)$ with smooth, positive, 1-periodic coefficients,
which homogenizes to
\begin{equation*}
u_t = <a>_H u_{xx} + <c> u(1-u).
\end{equation*}
The difference stems from the presence of the large advection term $\frac{1}{L}a'(x/L)u_x$ in the divergence form model as $L \to 0$.
This term modifies the corrector equation and,
consequently,
the homogenized limit. 
\end{remark}

To quantify the convergence rates of the spreading speeds, 
we introduce the following measure of almost periodicity, 
inspired by the work of Armstrong, Cardaliaguet, Souganidis, and Shen \cite{armstrong2014error,shen2015convergence}:
\begin{equation}\label{quantityofa.p.}
\rho(R;f) := \sup_{y \in \RR^n}\inf_{z \in \RR^n, |z| \leq R}\|f(\cdot + y) - f(\cdot + z)\|_{L^{\infty}(\RR^n)}
\end{equation}
for any $R > 0$ and vector-valued function $f$. 
One can show that a bounded continuous function $f$ in $\RR^n$ is almost periodic if and only if $\rho(R;f) \to 0$ as $R \to +\infty$. 
Define
\begin{equation*}
\Theta_\sigma(r;f) = \inf_{0 \leq R < 1/r}(\rho(R;f) + (rR)^\sigma).
\end{equation*}
Then the continuity and monotonicity of $\rho(R;f)$ imply
$$\Theta_\sigma(\cdot;f): \RR^+ \to \RR^+ \hbox{ is continuous, increasing and } \Theta_\sigma(0^+;f) = 0.$$
In other words, $\Theta_\sigma(\cdot;f)$ is a modulus.

Using the characterization of almost periodicity above, 
we can obtain the following estimates.

\begin{theorem}\thlabel{zerorate}
For any $\sigma \in (0,1)$,
there exists $C_\sigma > 0$ such that
$$\|\omega(e;L) - \omega(e;0)\| \leq C_\sigma\Theta_\sigma(L;(A,B,c)).$$
Moreover,
if $A,B$ and $c$ are periodic with the same period, then we further have
$$\|\omega(e;L) - \omega(e;0)\| \leq C_\sigma L^\sigma.$$
\end{theorem}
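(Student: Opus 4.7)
The plan is to split the problem into two steps: first reduce the convergence of the spreading speeds to a uniform estimate on the generalized principal eigenvalues, and then establish that eigenvalue estimate via quantitative almost-periodic homogenization (which will be worked out in Section~5).

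For the reduction, Theorem~2.1 gives $\omega(e;L)=\inf_{p\cdot e>0}\lambda(L,p)/(p\cdot e)$. Taking $\phi\equiv 1$ as a test function in \eqref{lowerGPE} and using \eqref{uniformelliptic} yields
\[
\lambda(L,p)\ \geq\ \alpha_m|p|^2-\|B\|_{L^\infty}|p|-\|c\|_{L^\infty}-\|\tilde c\|_{L^\infty},
\]
uniformly in $L$, while for any fixed $p_0$ with $p_0\cdot e>0$ one has $\lambda(L,p_0)\leq C(1+|p_0|^2)$ uniformly in $L$. Together these confine the minimizers of $p\mapsto\lambda(L,p)/(p\cdot e)$ (for both $L>0$ small and for $L=0$) to a compact set $K\subset\{p\cdot e\geq\delta\}$ independent of $L$. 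Hence it suffices to prove the uniform eigenvalue estimate
\[
\sup_{p\in K}\bigl|\lambda(L,p)-\lambda(p)\bigr|\ \leq\ C_\sigma\,\Theta_\sigma\bigl(L;(A,B,c)\bigr),
\]
after which the claim on $\omega(e;L)-\omega(e;0)$ follows by a routine $\inf$--comparison and division by $p\cdot e\geq\delta$.

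The eigenvalue estimate itself is obtained by a perturbed test function argument combined with quantitative almost-periodic corrector bounds. Starting from a near-optimal $\Acal$-test function $\phi$ for $\lambda(p)$ in the sense of \eqref{upperGPE}, one builds a two-scale function of the form $\phi_L(x)=\phi(x)\exp\!\bigl(L\chi(x/L;p)\bigr)$, where $\chi$ is an approximate corrector for the rescaled cell problem attached to the homogenized operator obtained from $\Lcal(L,p)$ by conjugation with $e^{-p\cdot x}$. A direct computation produces
\[
\Lcal(L,p)\phi_L\ \leq\ \bigl(\lambda(p)+E(L;p)\bigr)\phi_L,
\]
where Schauder and Harnack estimates reduce the error $E(L;p)$ to an $L^\infty$-oscillation of $\chi$ on a ball of radius $1/L$. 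Quantifying almost-periodicity through $\rho$ permits us to approximate the coefficients on this ball by an $\rho(R;(A,B,c))$-translate while introducing an $(LR)^\sigma$ Hölder error; optimizing over $R\in[0,1/L)$ produces precisely the modulus $\Theta_\sigma(L;(A,B,c))$. The reverse inequality is symmetric, using \eqref{lowerGPE}. The periodic improvement is automatic: once $R$ exceeds the common period, $\rho(R;(A,B,c))=0$, so $\Theta_\sigma(L;(A,B,c))\leq CL^\sigma$.

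The main obstacle is constructing the approximate corrector $\chi$ with $L^\infty$-bounds controlled by $\rho$: in genuine almost-periodic media the cell problem has no bounded exact corrector, so the usual periodic compactness argument fails. The doubling variable technique employed in Section~5 on the whole space with the paired variables $(x/L,y/L)$ is what replaces this compactness: it trades almost-periodic translation error against elliptic Hölder regularity, yielding the quantitative corrector bound and hence the modulus $\Theta_\sigma$. Once this is available, the remainder of the argument is the standard perturbed-test-function machinery adapted to the generalized principal eigenvalue.
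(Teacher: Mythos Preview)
Your reduction step is correct and is the paper's as well: coercivity of $\lambda(L,\cdot)$ (from testing with $\phi\equiv 1$) confines the minimizers to a compact $K\subset\{p\cdot e\ge\delta\}$ uniformly in $L$, so it suffices to bound $\sup_{p\in K}|\lambda(L,p)-\lambda(p)|$; the periodic improvement $\Theta_\sigma\le CL^\sigma$ is also exactly as you say.

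The perturbed-test-function step, however, has two defects that would make the computation fail as written. First, the scaling $\phi_L=\phi\,\exp(L\chi(x/L))$ is wrong for the regime $L\to 0$: in the Hopf--Cole variable $\psi=\ln\phi$ this is $\psi+L\chi(x/L)$, whose second derivative carries a factor $L^{-1}D^2\chi$ that diverges. The correct ansatz is $\psi+L^2\chi(x,x/L)$, so that $a^{ij}(x/L)D_{ij}$ applied to the corrector is $O(1)$ and one lands on the \emph{linear} cell problem $a^{ij}(y)D_{y_iy_j}\chi+F(x,y)=\iota(F)$; the paper does exactly this (after passing to the Hamilton--Jacobi form) via $v_L=v_0+L^2v_2(x,x/L)$. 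Second, your corrector depends only on the fast variable and on $p$, but the source $F$ in the cell problem above involves $D^2v_0(x)$, $Dv_0(x)-p$ and $\tilde c(x)$; hence the corrector must be a genuine two-variable object $v_2(x,y)$, and without the slow-variable dependence the $O(1)$ terms cannot be matched. Once these two points are corrected, the rate does emerge from the mechanism you sketch, though not quite as ``oscillation of $\chi$ on a ball of radius $1/L$'': one bounds the corrector itself by observing that $v_2(x,\cdot)-v_2(x,\cdot+z)$ solves a linear elliptic equation with right-hand side controlled by $\|(A,B,c)(\cdot)-(A,B,c)(\cdot+z)\|_\infty$, and then trades almost-periodic translation cost $\rho(R)$ against the elliptic H\"older cost $(LR)^\sigma$ to get $L^2\|v_2\|+L\|D_yv_2\|+\cdots\le C_\sigma\Theta_\sigma(L;(A,B,c))$; substituting $v_L$ back and invoking comparison yields the eigenvalue bound.
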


We now turn to the case $L \to \infty$.

\begin{theorem}\thlabel{maintheoreminfinity}
For any $p \in \RR^n$, the limit
$$\Lambda(p) = \lim\limits_{L \to +\infty}\lambda(L,p)$$
exists and coincides with the effective Hamiltonian of the problem
\begin{equation}
\overline{H}(x,Du-p) = \Lambda(p).
\end{equation}
Here $\overline{H}(x,p)$ itself is the effective Hamiltonian of the following  problem
\begin{equation}\label{transHamilton}
\tilde H(y,x,D\psi + p, D^2\psi) = \overline{H}(x,p),
\end{equation}
with differentiation in $y$,
where $\tilde H$ is defined in \eqref{2-scaleH}. 
Moreover, 
the convergence $\lambda(L,p) \to \Lambda(p)$ as $L \to +\infty$ is locally uniformly with respect to $p \in \RR^n$.
\end{theorem}

\begin{theorem}\thlabel{infity limit}
$\omega(e;L)$ converges as $L \to +\infty$ for any $e \in \mathbb{S}^{n-1}$,
and the limit is characterized by
$$\lim\limits_{L \to +\infty}\omega(e;L) = \inf_{p \cdot e> 0}\frac{\Lambda(p)}{p \cdot e}.$$
\end{theorem}

Regarding the convergence rate,
we need to impose some restriction on $\tilde{c}$.

\begin{theorem}\thlabel{infinityrate}
Suppose there exist constants $C, \tau > 0$ such that
\begin{equation}\label{a.p.char}
\rho(R;\tilde{c}) \leq CR^{-\tau}
\end{equation}
for all $R \geq 1$.
Then there exists a constant $C > 0$ for which
$$\|\omega(e;L) - \omega(e;+\infty)\| \leq C L^{\frac{-\tau}{2\tau+1}}.$$
Furthermore,
if $\tilde{c}$ is periodic,
the rate improves to
$$\|\omega(e;L) - \omega(e;+\infty)\| \leq CL^{-\frac{1}{2}}.$$
\end{theorem}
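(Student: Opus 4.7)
The plan is to reduce the convergence of the spreading speeds to a quantitative estimate on the generalized principal eigenvalues $\lambda(L,p) \to \Lambda(p)$ and then derive the rate by combining a periodic approximation of $\tilde{c}$ with a quantitative homogenization estimate for the resulting periodic problem. From the variational formula $\omega(e;L) = \inf_{p \cdot e > 0} \lambda(L,p)/(p \cdot e)$ and its $L = +\infty$ analogue, the hypotheses $\inf c, \inf \tilde{c} > 0$ together with the quadratic growth $\lambda(L,p) \geq \alpha_m |p|^2 - O(|p|) - O(1)$ (uniform in $L$) confine both infima to a compact set of $p$'s bounded away from the hyperplane $p \cdot e = 0$; on this compact range $|\omega(e;L) - \omega(e;+\infty)|$ is controlled by $\sup_p |\lambda(L,p) - \Lambda(p)|$, so it suffices to prove $|\lambda(L,p) - \Lambda(p)| \leq C L^{-\tau/(2\tau+1)}$ uniformly there.

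For any $R \geq 1$, the next step is to construct an auxiliary function $\tilde{c}_R$ that is periodic with period comparable to $R$ and satisfies
$$\|\tilde{c} - \tilde{c}_R\|_{L^{\infty}(\RR^n)} \leq C \rho(R;\tilde{c}) \leq C R^{-\tau},$$
obtained by averaging $\tilde{c}$ against translates drawn from its relatively dense set of $R^{-\tau}$-almost periods, in the spirit of the periodizations of \cite{armstrong2014error,shen2015convergence}. Replacing $\tilde{c}$ by $\tilde{c}_R$ inside $\Lcal(L,p)$ perturbs the generalized principal eigenvalue by at most $\|\tilde{c} - \tilde{c}_R\|_\infty \leq CR^{-\tau}$, as one checks by a direct sub- and supersolution comparison with test functions from $\Acal$ in the definitions \eqref{lowerGPE}--\eqref{upperGPE}.

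For the operator with slow coefficients $a_L,b_L,c_L$ on scale $L$ and fast coefficient $\tilde{c}_R$ of period $R$, the plan is to transplant the periodic homogenization rate already set up in \thref{maintheoreminfinity}. Conceptually, after the Hopf-Cole transform $\phi = e^v$ the eigenvalue problem becomes a viscous Hamilton-Jacobi equation, and rescaling to the macroscopic variable $y = x/L$ turns it into a two-scale problem with small parameter $\epsilon = R/L$; the doubling variable technique, as implemented in \cite{qian2024optimal}, then yields an error of order $\sqrt{R/L}$. Combining the two contributions gives
$$|\lambda(L,p) - \Lambda(p)| \leq C\bigl(R^{-\tau} + \sqrt{R/L}\bigr),$$
and the balancing choice $R = L^{1/(2\tau+1)}$ produces $CL^{-\tau/(2\tau+1)}$. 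When $\tilde{c}$ is itself periodic one simply takes $R$ equal to its period, killing the first term and leaving $L^{-1/2}$.

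The main obstacle is the third step: transferring the $\sqrt{\epsilon}$ rate for periodic viscous HJ homogenization, which is typically proved on a bounded cell with boundary data, to our whole-space eigenvalue setting where only almost-eigenfunctions in $\Acal$ (not genuine periodic eigenfunctions) are available. The plan for this is to double the variable on $\RR^{2n}$, add the standard quadratic penalty $|x-y|^2/\epsilon$ together with a small localizing term $\delta|y|^2$, and exploit the coercivity supplied by $a^{ij}(y) p_i p_j - b^i(y) p_i + c(y) + \inf \tilde{c}_R$ on the diagonal, combined with the Lipschitz dependence of the effective Hamiltonian on the slow variable, to close the comparison and send $\delta \to 0$.
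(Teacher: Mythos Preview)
Your reduction in the first paragraph (confining the infimum to a compact $p$-range bounded away from $p\cdot e=0$ and then passing to $\sup_p|\lambda(L,p)-\Lambda(p)|$) is correct and is exactly how the paper proceeds. The gap is in your second step.

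The claim that one can produce a periodic $\tilde c_R$, of period comparable to $R$, with $\|\tilde c-\tilde c_R\|_{L^\infty(\RR^n)}\le C\rho(R;\tilde c)$ is not justified and in general fails. If $T$ is an $\epsilon$-almost period of $\tilde c$ then $kT$ is only a $k\epsilon$-almost period, so the naive periodic extension $\tilde c(\,\cdot\bmod T)$ drifts linearly in $k$; averaging over translates by the almost periods does not cure this in $L^\infty$ either, because the errors accumulate before the average converges. Concretely, for $\tilde c(x)=\cos x+\cos(\sqrt 2\,x)$ one has $\rho(R;\tilde c)\le C/R$, yet no $R$-periodic function approximates it in $L^\infty(\RR)$ to order $1/R$: any candidate period $T$ forces $\|\tilde c(\cdot+T)-\tilde c\|_\infty\le 2\|\tilde c-\tilde c_R\|_\infty$, and iterating shows the approximation error on $[kT,(k+1)T)$ cannot stay $O(1/R)$ uniformly in $k$. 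The references you cite do \emph{not} build $L^\infty$ periodizations of the coefficient; they work directly with the almost periodic structure. Since the eigenvalue perturbation estimate you invoke, $|\lambda(L,p;\tilde c)-\lambda(L,p;\tilde c_R)|\le\|\tilde c-\tilde c_R\|_\infty$, genuinely requires the sup norm, this step cannot be repaired by passing to a weaker topology.

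The paper avoids periodization entirely. For each $\delta>0$ it takes the approximate corrector $\psi_\delta(y;x,q)$ of the inner problem $H(x,y,D_y\psi_\delta+q,D_y^2\psi_\delta)=\delta\psi_\delta+\overline H(x,q)$ and proves, directly from $\rho(R;\tilde c)\le CR^{-\tau}$, the oscillation bound $\|\delta\psi_\delta\|\le C\delta^{\tau/(\tau+1)}$ together with a Lipschitz-in-$q$ estimate of the same order (\thref{oscillationestimate}, \thref{correctorestimate}). It then runs a three-variable doubling argument on
\[
\Phi(x,y,z)=\epsilon v^L(x)-\epsilon v(y)-\tfrac{1}{L}\psi_\delta\bigl(Lx;x,2L^\beta(z-y)\bigr)-L^\beta|x-y|^2-L^\beta|x-z|^2-\eta\sqrt{1+|x|^2},
\]
with $\beta=\tau/(2\tau+1)$ and $\delta=L^{-(\tau+1)/(2\tau+1)}$, so that the three error sources (size of $\delta\psi_\delta$, the $L^\beta$-penalty scale, and the $1/L$ viscous term) balance at $L^{-\tau/(2\tau+1)}$. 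The periodic case then falls out by taking $\rho(R;\tilde c)=0$ for large $R$, which gives $\|\delta\psi_\delta\|\le C\delta$ and hence the $L^{-1/2}$ rate. This is the idea you would need in place of your steps 2--3.
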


We present several additional results.
First,
compared to \cite{hamel2011viscosity},
\thref{maintheoreminfinity} can be viewed as an extension of their work.
In fact,
in the case $\tilde{c} = 0$,
it is straightforward to verify that
\begin{equation*}
\overline{H}(x,p) 
= \sum\limits_{i,j=1}^{n}a^{ij}(x)p_ip_j 
+ \sum\limits_{i=1}^{n}b^i(x) p_i
+ c(x),
\end{equation*}
which recovers the expression given in \cite{hamel2011viscosity}. 
Moreover,
we have the following almost periodic version:

\begin{proposition}\thlabel{infinity in 1-dim}
If  $n = 1$ and $\tilde{c} = 0$, then 
$$\Lambda(p) = \begin{cases}
(j_+)^{-1}(p), \quad &p > j_+(M), \\
M, \quad &j_-(M) \leq p \leq j_+(M), \\
(j_-)^{-1}(p), \quad &p < j_-(M),
\end{cases}$$
where
\begin{equation}\label{infirelation}
j_{\pm}: [M,+\infty) \to \RR, \quad \lambda \mapsto \Xint-\frac{b(x)}{2a(x)}dx \pm \Xint-\sqrt{\frac{\lambda - c(x)}{a(x)} + \frac{b^2(x)}{4a^2(x)}}dx,
\end{equation}
and
$$M := \sup\limits_{x \in \RR}\{c(x) - \frac{b^2}{4a}(x)\}.$$
\end{proposition}

In particular,
we have
\begin{proposition}\thlabel{smaller eigenvalues}
Assume $n = 1$, $\tilde{c} = 0$ and $a, c$ are positive constants,
$b \not \equiv 0$ is almost periodic with $<b> = 0$.
Then we have
\begin{equation*}
\Lambda(p) \leq ap^2 + c.
\end{equation*}
The equality $\Lambda(p) = ap^2 + c$ holds if and only if $p = 0$.
\end{proposition}

As an application,
we have
\begin{corollary}\thlabel{slow-down speeds}
Assume  $n = 1$, $\tilde{c} = 0$ and $a, c$ are positive constants,
$b \not \equiv 0$ is almost periodic with $<b> = 0$.
Then
$\omega(\pm 1; L) < 2\sqrt{ac}$ when $L$ is sufficiently large.
\end{corollary}
\begin{remark}
In periodic settings,
let us recall the related conclusions in \cite{berestycki2005speed} and \cite{nadin2011some}. Assume $A=I$, the identical matrix, and constant $c,\tilde{c} > 0$.
it is shown in \cite{berestycki2005speed} that an incompressible, mean zero drift $B\not\equiv 0$ accelerate the propagation.
In contrast,
\cite{nadin2011some} demonstrated that a gradient drift
$B = \nabla Q$, for some periodic function $Q$,
decelerates propagation. 
Moreover,
The proof in \cite{nadin2011some} implies that \thref{slow-down speeds} holds for any $L > 0$ when $b$ is periodic.

Here, we leave a conjecture here:
\begin{conjecture}Suppose $B = \nabla Q$ for some almost periodic function $Q$ and $B\not \equiv 0$.
Then $B$ will strictly slow-down the propagation when $A=I$, the identical matrix, $c$ is a positive constant for any $L > 0$.
\end{conjecture}
\end{remark}

We conclude this section with the following result.
\begin{theorem}\thlabel{accelerate}
Let $ \tilde{c} = 0$,
and denote the spreading speeds of \eqref{equation} along $e \in \mathbb{S}^{n-1}$ by $\omega(e;L)$.
Then for any $\tilde{c} \not \equiv 0$ with $<\tilde{c}> = 0$,
the corresponding spreading speeds of \eqref{equation} along $e \in \mathbb{S}^{n-1}$ are strictly greater than $\omega(e;L)$ for sufficiently large or small $L$.
\end{theorem}

\begin{remark}
We note that Berestycki, Hamel and Roques \cite{berestycki2005analysis} have proved that when $A$ is the identical matrix,
$B = 0$, ${c}$ is a nonnegative constant and $\tilde{c} = R v(x)$,
where $v$ is periodic, $\Xint{-}v \geq 0 \, v\not\equiv 0$, $R\geq 0$ is constant.
Then the spreading speed is an increasing function of $R $.
In particular,
if $\Xint{-}v = 0$ but $v \not\equiv 0$,
then their work implies that introducing a heterogeneous reaction term always accelerate the propagation.

Here we conclude that the above Theorem \thref{accelerate} does not hold for general $L$ by providing a simple counterexample in one dimension:
Let $a = 1$,
$b = 0$,
$c = \sin{x} + 2$
and $\tilde{c} = -\sin{x}$.
Then for $L = 1$,
the spreading speeds of \eqref{equation} is strictly smaller than the speed when $\tilde{c} \equiv 0$. 
\end{remark}

\section{Limit behavior when $L \to 0$ : Proof of \thref{maintheoremzero,zero limit,zerorate}}

In this section, 
we mainly consider the limit behavior of the generalized principal eigenvalue as $L \to 0$. 
In the rest of this paper, 
we will sometimes drop the subscript of $\|\cdot\|_{L^{\infty}(\RR^n)}$ when no confusion arises.

Based on \thref{appro}, for any $L > 0$ and $p \in \RR^n$,
we approximate the generalized principal eigenvalue $\lambda(L,p)$ of $\Lcal(L,p)$
by considering the following problem  
\begin{equation}\label{approximation}
\tilde H(x,\frac{x}{L},Du_L-p,D^2u_L) = \epsilon u_L,
\end{equation}
where $\tilde H$ is defined in \eqref{2-scaleH}.

For this purpose, we introduce the doubling variable technique.  Let $v_0=v_0(x)$ and $v_2=v_2(x,y), $$$v_L(x) = v_0(x) + L^2 v_2(x,\frac{x}{L}),$$
use $v_L$ as a test function of \eqref{approximation} and calculate by composite derivative directly: 
\begin{equation*}
\begin{aligned}
\epsilon v_L - \tilde H(x,\frac{x}{L},Dv_L-p,D^2v_L)
&= \Big(-\sum\limits_{i,j=1}^{n}a^{ij}(y)D_{y_iy_j}v_2 
- \sum\limits_{i,j=1}^{n}a^{ij}(y)D_{ij}v_0 \\
&\quad -\sum\limits_{i,j=1}^{n}a^{ij}(y)(D_iv_0-p_i)(D_jv_0-p_j)
- \sum\limits_{i=1}^{n}b^i(y)(D_iv_0-p_i)  \\
&\quad -c(y) - \tilde{c}(x)+ \epsilon v_0  \Big)\Big|_{y=\frac{x}{L}}+ \rm o(1)\\
&=: I|_{y=\frac{x}{L}}+{\rm o(1)}, (L\to 0).
\end{aligned}
\end{equation*}
Here $\rm o(1)$ is about $L\to 0$, and depends on $v_0,v_2$ and $x$.

For $v_L$ to be a good approximation of the unique bounded solution $u_L$ of \eqref{approximation},
a natural approach is to require the following general equation with two variables $(x,y)$  satisfied: $I\equiv 0$, that is,
\begin{equation}\label{dvequation}
\begin{aligned}
&\sum\limits_{i,j=1}^{n}a^{ij}(y)D_{y_iy_j}v_2 
+ \sum\limits_{i,j=1}^{n} a^{ij}(y)D_{ij}v_0 
+ \sum\limits_{i,j=1}^{n}a^{ij}(y)(D_iv_0-p_i)(D_jv_0 - p_j) \\
&+ \sum\limits_{i=1}^{n}b^i(y)(D_iv_0-p_i) 
+ c(y) = \epsilon v_0 - \tilde{c}(x),
\end{aligned}
\end{equation}

Since the right-hand side of \eqref{dvequation} depends only on $x$,
it is natural to choose $v_2$ so that the left-hand side is also independent of $y$.
We therefore turn to study
\begin{equation}\label{homogenizationoperator}
\sum\limits_{i,j=1}^{n}a^{ij}(y)D_{ij}v(y) + F(y) = \iota(F),
\end{equation}
where $F$ is almost periodic and $\iota(F) \in \RR$.
We will address equation \eqref{homogenizationoperator} in a manner similar to \eqref{effectiveHamiltonian}.

\begin{theorem}\thlabel{iota}
Assume $F$ is almost periodic.
Then there exists a unique constant $\iota(F) \in \RR$ such that
for any $\delta > 0$,
the problem
\begin{equation*}
\iota(F) - \delta < \sum\limits_{i,j=1}^{n}a^{ij}(y)D_{ij}v(y) + F(y) < \iota(F) + \delta
\end{equation*}
admits a bounded and Lipschitz continuous viscosity solution $v$.
\end{theorem}

\begin{proof}
For any $\epsilon > 0$, 
by classical elliptic theory, 
there exists $v_{\epsilon} \in W^{2,p}_{\mathrm{loc}}(\RR^n)$ for any $p \in (1,\infty)$ satisfying
$$\sum\limits_{i,j=1}^{n}a^{ij}(y)D_{ij}v_\epsilon + F(y) = \epsilon v_{\epsilon},$$
with estimates
$$\|\sqrt{\epsilon}Dv_\epsilon\| + \|\epsilon v_{\epsilon}\| \leq C\|F\|.$$
and
$$\Xint{-}_{B(x,R^n)}|D^2v_\epsilon(x)|^pdx \leq C\|F\|$$
for any $x \in \RR^n$ and $R > 0$.

Now for any $z\in \RR^n$, let $\varphi(y) = v_{\epsilon}(y) - v_\epsilon(y+z)$. Then $\varphi$ solves
$$\sum\limits_{i,j=1}^{n}a^{ij}(y)D_{ij}\varphi -\epsilon \varphi = \hbox{tr}((A(y+z) - A(y))D^2v_\epsilon(y+z)) + F(y+z) - F(y).$$
By classical elliptic regularity theory, we have
$$\epsilon\|v_\epsilon(\cdot) - v_\epsilon(\cdot+z)\| \leq C(\|A(\cdot) - A(\cdot+z)\| + \|F(\cdot) - F(\cdot + z)\|).$$
Note that $A$ and $F$ are almost periodic, 
thus for any $R$, there exists $\tilde{z} \in \RR^n$ with $|\tilde{z}| \leq R$ such that
$$\|A(\cdot+z) - A(\cdot+\tilde{z})\| + \|F(\cdot+z) - F(\cdot+\tilde{z})\| \leq \rho(R;(A,F))$$
for all $z \in \RR^n$.
Using this estimate then we obtain
\begin{equation*}
\begin{aligned}
\epsilon\|v_\epsilon(\cdot) - v_\epsilon(\cdot+z)\| 
&\leq \epsilon\|v_\epsilon(\cdot) - v_\epsilon(\cdot+\tilde{z})\| + \epsilon\|v_\epsilon(\cdot+\tilde{z}) - v_\epsilon(\cdot+z)\| \\
&\leq \epsilon\|Dv_\epsilon\|\|\tilde{z}\| + C\rho(R;(A,F)) \\
&\leq C\sqrt{\epsilon}R + C\rho(R;(A,F)).
\end{aligned}
\end{equation*}
Then we have
$$\epsilon\|v_{\epsilon} - v_\epsilon(\cdot+z)\| \leq C\Theta_1(\sqrt{\epsilon};(A,F)) \to 0 \quad \hbox{ as } \epsilon \to 0^+,$$
which implies that $\epsilon v_\epsilon$ converges to some constant.

The uniqueness of $\iota(F)$ follows from the same argument as in \cite{lions2005homogenization2}.
\end{proof}

By the argument in \cite{lions2005homogenization2},
we also have the following:

\begin{proposition}\thlabel{characterization of iota}
For any bounded and Lipschitz continuous $v$,
we have
\begin{equation*}
\inf_{y \in \RR^n} \sum\limits_{i,j=1}^{n}a^{ij}(y)D_{ij}v(y) + F(y) 
\leq \iota(F) 
\leq \sup_{y \in \RR^n} \sum\limits_{i,j=1}^{n}a^{ij}(y)D_{ij}v(y) + F(y)
\end{equation*}
in viscosity sense.
\end{proposition}

One could easily verify that $\iota$ defines a linear map from $AP(\RR^n)$ to $\RR$.
Here $AP(\RR^n)$ denotes the space of all almost periodic functions on $\RR^n$ equipped with the supremum norm. 
This space was shown by Bohr \cite{bohr1925theorie} to be complete.
Moreover, we see that
$\iota(F) \leq \|F\|.$ This means that $\iota$ is a bounded linear functional in $AP(\RR^n)$ with $L^\infty$ topology.

The definition of the operator $\iota$ implies that $v_0$ must satisfy the equation
\begin{equation}\label{limitat0}
 \sum\limits_{i,j=1}^{n}\iota(a^{ij})D_{ij}v_0 
+ \sum\limits_{i,j=1}^{n}\iota(a^{ij})(D_iv_0-p_i)(D_jv_0 - p_j) 
+ \sum\limits_{j=1}^{n} \iota(b^i)(D_iv_0-p_i) 
+ \iota(c) 
+ \tilde{c}(x) 
= \epsilon v_0.
\end{equation}

For any direction $\xi \in \mathbb{S}^{n-1}$,
\thref{uniformelliptic} gives
$$\inf_{x \in \RR^n}  \sum\limits_{i,j=1}^{n}a^{ij}(x) \xi_i \xi_j \geq \alpha_m.$$
Taking $F = \sum\limits_{i,j=1}^{n}a^{ij} \xi_i\xi_j$ and applying \thref{characterization of iota}, 
we obtain
$$\sum\limits_{i,j=1}^{n}\iota(a^{ij}) \xi_i\xi_j = \sum\limits_{i,j=1}^{n}\iota(a^{ij} \xi_i\xi_j)  \geq \alpha_m,$$
which implies that $(\iota(a^{ij}))_{n\times n}$ is positive definite.
By Perron's method, \eqref{limitat0} admits a unique bounded solution $v_0$.

For any $p \in \RR^n$,
$L > 0$ and $\epsilon > 0$,
let $u_L$ be the unique bounded solution of \eqref{approximation}.
The strategy of proving \thref{maintheoremzero} is to show that $u_L$ converges to $v_0$ as $L \to 0^+$, 
and therefore $\lambda(L,p)$ converges to certain effective Hamiltonian.

\begin{proof}
[\textbf{Proof of \thref{maintheoremzero}}]
Fix $\epsilon > 0$ and $p \in \RR^n$. 
For any $L > 0$, 
let $u_{L}$ be the unique bounded solution of \eqref{approximation}.
The estimates \eqref{bound} implies that
 $\{u_L\}_{L > 0}$ is bounded in $(C^2(\RR^n),\|\cdot\|_{W^{2,\infty}(\RR^n)})$. 
By Arzela-Ascoli theorem, there exists a subsequence of $\{u_L\}_{L > 0}$ that converges in to a limit $v \in C^1(\RR^n)$ as $L \to 0^+$. 

For any $x_0 \in \RR^n$, denote $q = Dv(x_0)$.
Assume $\phi \in C^2(\RR^n)$ touches $v$ at $x_0$ from below.
For any $\delta > 0$, let $\psi = \psi(y)$ a $\delta$-corrector of the following  problem
\begin{equation*}
\|\sum\limits_{i,j=1}^{n}a^{ij}(y)D_{ij}\psi 
+ \sum\limits_{i,j=1}^{n}a^{ij}(y)D_{ij}\phi(x_0) 
+ \sum\limits_{i,j=1}^{n}a^{ij}(y)(q_i-p_i)(q_j-p_j) 
+ \sum\limits_{i=1}^{n}b^i(y)(q_i-p_i) + c(y) - N\| \leq \delta,
\end{equation*}
where 
\begin{equation*}
N = \iota(\sum\limits_{i,j=1}^{n}a^{ij}D_{ij}\phi(x_0) 
+ \sum\limits_{i,j=1}^{n}a^{ij}(q_i-p_i)(q_j-p_j) 
+ \sum\limits_{i=1}^{n}b^i(q_i-p_i) + c) \in \RR.
\end{equation*}
Let $\psi_L(x) = u_L(x) - \phi(x) - L^2\psi(\frac{x}{L})$. 
Note that $\psi_L \to v-\phi$ as $L \to 0$ locally uniformly and $v-\phi$ attains its minimum at $x = x_0$, 
then there exists $\{x_L\}_{L > 0}$ with $x_L \to x_0$ as $L \to 0$ such that $\psi_L$ has a local minimum at $x = x_L$.
Therefore we have
\begin{equation*}
Du_L(x_L) = \phi(x_L) + L\psi(x_L/L), \quad D^2u_L(x_L) \geq D^2\phi(x_L) + D^2\psi(x_L/L).
\end{equation*}
Substituting the above inequality into the equation for $u_L$ at $x = x_L$ yields
\begin{equation*}
\begin{aligned}
\epsilon u_L 
&\geq \tilde{c} + c_L 
+ \sum\limits_{i=1}^{n}b_L^i(D_i\phi + LD_i\psi(\cdot/L) - p_i) 
+ \sum\limits_{i,j=1}^{n}(a_L^{ij}D_{ij}\phi + a_L^{ij}D_{ij}\psi(\cdot/L)) \\
&\quad + \sum\limits_{i,j=1}^{n}a_L^{ij}(D_i\phi + LD_i\psi(\cdot/L) - p_i)(D_j\phi + LD_j\psi(\cdot/L) - p_j). 
\end{aligned}
\end{equation*}
Using the inequality $\psi$ satisfies, we therefore get that
\begin{equation*}
\begin{aligned}
\epsilon u_L
&\geq \tilde{c} + N - \delta 
+ \sum\limits_{i=1}^{n}b_L^i(D_i\phi - q_i) 
+ \sum\limits_{i,j=1}^{n}a_L^{ij}(D_i\phi - q_i)(D_j\phi - q_j) \\
&+ \sum\limits_{i,j=1}^{n}a_L^{ij}(D_{ij}\phi - D_{ij}\phi(x_0))+ \rm o(1),
\end{aligned}
\end{equation*}
where the above inequalities take values at $x = x_L$.
Letting $L \to 0$ we have
$$\epsilon v(x_0) \geq \tilde{c}(x_0) + N - \delta.$$
Letting $\delta \to 0$, we conclude that $v$ is a viscosity supersolution of \eqref{limitat0}.
A parallel argument shows that $v$ is also a subsolution,
and hence a viscosity solution of \eqref{limitat0}.

It remains to show that $\epsilon u_L$ converges to $\lambda(L,p)$ as $\epsilon \to 0^+$ uniformly in $L$.
In fact, for each $\gamma > 0$, 
by the almost periodicity of $\tilde{c}$,
there exists a relatively dense set 
$\{\tilde{y}_n\}_{n \in \ZZ} \subset \RR^n$
such that
$$
\|\tilde{c}(\cdot) - \tilde{c}(\cdot + \tilde{y}_n)\| \leq \gamma/3
$$
for all $n \in \ZZ$.
We assume that $M_1 > 0$ satisfies $\mathop{\bigcup}\limits_{n \in \ZZ}B(\tilde{y}_n,M_1) = \RR^n$.
On the other hand, 
for $a^{ij},b^i$ and $c$, 
there exists $\{y_n\}_{n \in \ZZ} \subset \RR^n$ 
satisfying $\mathop{\bigcup}\limits_{n \in \ZZ}B(y_n,M_2) = \RR^n$ 
for some $M_2 > 0$ such that
$$\|A(\cdot)-A(\cdot+y_n)\| 
+ \|B(\cdot)-B(\cdot+y_n)\| 
+ \|c(\cdot)-c(\cdot+y_n)\| 
\leq \gamma/3$$
for all $n \in \ZZ$.
Since $\tilde{c}$ is uniformly continuous, 
there exists $\eta_0 > 0$ such that for all $\eta \in (0,\eta_0)$, 
$$\|\tilde{c}(\cdot + \eta) - \tilde{c}(\cdot)\| \leq \gamma/3.$$
By the relative density of the sets $\{y_n\}_{n \in \ZZ}$,
there exists $L_0 > 0$ such that for all $L < L_0$ and $n \in \ZZ$, 
we can find $z_{L,n} = Ly_m$ for some $m \in \ZZ$ 
such that $\|z_{L,n} - \tilde{y}_n\| < \eta_0$.
Therefore we obtain that
$$\|\tilde{c}(\cdot + z_{L,n}) - \tilde{c}(\cdot + \tilde{y}_n)\| 
\leq \gamma/3.$$

In conclusion, for any $L < L_0$, there exists a relatively dense set $\{z_{L,n}\}_{n \in \ZZ}$ and a constant $N > 0$, independent of $L$, such that
\begin{equation*}
\begin{aligned}
&\|A-A(\cdot+z_{L,n})\| 
+ \|B(\cdot)-B(\cdot+z_{L,n})\| \\
&+ \|c_L(\cdot)-c_L(\cdot+z_{L,n})\| 
+ \|\tilde{c}(\cdot) - \tilde{c}(\cdot + z_{L,n})\| 
\leq \gamma
\end{aligned}
\end{equation*}
and $\bigcup\limits_{n \in \ZZ}B(z_{L,n},N) = \RR^n$.

Define the translated coefficients by $a_{L,n}^{ij}(x) = a_L^{ij}(x+z_{L,n})$,
with $b_{L,n}^i, c_{L,n}$ defined similarly. 
Denote $\tilde{c}_{L,n}(x) = \tilde{c}(x+z_{L,n})$, then one has 
\begin{equation*}
\begin{aligned}
&\|\epsilon u_L 
- \sum\limits_{i,j=1}^{n}a_{L,n}^{ij}D_{ij}u_L 
- \sum\limits_{i,j=1}^{n}a_{L,n}^{ij}(D_iu_L-p_i)(D_ju_L-p_j)
- \sum\limits_{i=1}^{n}b_{L,n}^i(D_iu_L-p_i) 
- c_{L,n} 
- \tilde{c}_{L,n}\| \\
&= \|\sum\limits_{i,j=1}^{n}(a_L^{ij} - a_{L,n}^{ij})D_{ij}u_L
+ \sum\limits_{i,j=1}^{n}(a_L^{ij}-a_{L,n}^{ij})(D_iu_L-p_i)(D_ju_L-p_j) \\
&\quad + \sum\limits_{i=1}^{n}(b_L^i-b_{L,n}^i)(D_iu_L-p_i) 
+ (c_L-c_{L,n}) 
+ (\tilde{c} 
- \tilde{c}_{L,n})\| \\
&\leq C_p (\sum\limits_{i,j=1}^{n}\|a_L^{ij}-a_{L,n}^{ij}\| 
+ \sum\limits_{i=1}^{n}\|b_L^i-b_{L,n}^i\| 
+ \|c_L-c_{L,n}\| 
+ \|\tilde{c}- \tilde{c}_{L,n}\|) \\
&\leq C_p \gamma.
\end{aligned}
\end{equation*}
Here, the constant $C_p$ is independent of $L, \epsilon$ and $\gamma$. 
Comparison principle \cite{crandall1992user} yields that
$$\|\epsilon u_L(\cdot) - \epsilon u_L(\cdot+z_{L,n})\| \leq C_p\gamma.$$
For any $x,y \in \RR^n$, we can choose $n \in \ZZ$ such that $|x-z_{L,n}| \leq N$.
It follows that
\begin{equation*}
\begin{aligned}
|\epsilon u_L(x) - \epsilon u_L(y)| 
&\leq |\epsilon u_L(x) - \epsilon u_L(z_{L,n})| 
+ |\epsilon u_L(z_{L,n}) - \epsilon u_L(y)| \\
& \leq \epsilon \|Du_L\||x-z_{L,n}| 
+ C_p\gamma 
\leq 2 C_p \gamma
\end{aligned}
\end{equation*}
provided $\epsilon$ is sufficiently small.
By \thref{characterization of effective Ham}, we therefore obtain that 
\begin{equation*}
\|\epsilon u_L - \lambda(L,p)\| \leq 2C_p\gamma,
\end{equation*}
which implies that $\lambda(L,p)$ converges to $\overline\lambda(p)$, the effective Hamiltonian of \eqref{zeroeigenvalues} as $L \to 0$. 
The locally uniform convergence with respect to $p \in \RR^n$ follows from the uniform boundedness of the constants $C$ and $C_p$ in the preceding estimates for $p$ in bounded sets.
\end{proof}

Now we could prove \thref{zero limit}.

\begin{proof}
[\textbf{Proof of \thref{zero limit}}] For any $L > 0$, 
by taking $v \equiv 1$ as a test function and using \eqref{spreading occurs}, 
we can find $\alpha_i, \beta_i > 0 \ (i=1,2)$ independent of $L$ such that 
\begin{equation}\label{eigenvalueestimate}
\alpha_1 |p|^2 + \beta_1 \leq \lambda(L,p) \leq \alpha_2 |p|^2 + \beta_2.
\end{equation}
Moreover, by \thref{convexity}, 
$\lambda(L,p)$ is convex in $p$.
In conclusion, 
for any $e \in \mathbb{S}^{n-1}$ and $L > 0$, 
there exists $p^L \in \RR^n$ with $p^L \cdot e > 0$ such that
$$\omega(e;L) = \frac{\lambda(L,p^L)}{p^L \cdot e}.$$
It suffices to prove that the set $\{p^L \cdot e\}_{L > 0}$ is bounded away from infinity and zero.
The upper bound in \eqref{eigenvalueestimate} implies
$$\omega(e;L) \leq \lambda(L,e) \leq \alpha_2 + \beta_2 \leq C,$$
which shows that $\omega(e;L)$ is uniformly bounded from above in $L$.
For the lower bound in \eqref{eigenvalueestimate}, we have
$$\alpha_1(p^L \cdot e)^2 
+ \beta_1 
\leq \alpha_1|p^L|^2 
+ \beta_1 
\leq \omega(e;L)p^L \cdot e 
\leq C (p^L \cdot e).$$
The desired bounds on $p^L \cdot e$ the follow from an elementary analysis of this quadratic inequality.
\end{proof}

\begin{remark}\thlabel{boundedness of p}
We furthermore have the chain of inequalities
$$\alpha_1 |p^L|^2 + \beta_1 \leq \lambda(L,p^L) = 
\omega(e;L) (p^L \cdot e) \leq (\alpha_2 + \beta_2)|p^L|.$$
This implies that $\{p^L\}_{L > 0}$ is uniformly bounded.
\end{remark}

To study the convergence rate of the generalized principal eigenvalues, 
we continue the analysis from the beginning of this section.
Let $v_0$ be the unique bounded solution of \eqref{limitat0}, and $v_2 = v_2(x,y)$ the unique bounded solution of 
$$\sum\limits_{i,j=1}^{n}a^{ij}(y)D_{y_iy_j}v_2 + F(x,y) = L^{2}v_2,$$
where
\begin{equation*}
F(x,y) = \tilde{H}(x,y,Dv_0(x) - p, D^2v_0(x)) - \epsilon v_0(x).
\end{equation*}
Define $v_L(x) = v_0(x) + L^2v_2(x,\frac{x}{L})$ and substitute it into \eqref{approximation} to obtain
\begin{equation*}
\begin{aligned}
&\quad\epsilon v_L - \tilde{H}(x,\frac{x}{L},Dv_L - p, D^2v_L) \\
&= (\epsilon - 1)L^2 v_2
+ \sum\limits_{i,j=1}^{n}( 2La_L^{ij}D_{x_iy_j}v_2 + L^2a_L^{ij}D_{x_ix_j}v_2)
-2 \sum_{i,j=1}^{n}a_L^{ij}(D_iv_0 - p_i)(LD_{y_i}v_2 + L^2D_{x_i}v_2) \\
&\quad-\sum\limits_{i,j=1}^{n}a_L^{ij}(LD_{y_i}v_2+L^2D_{x_i}v_2)(LD_{y_j}v_2 + L^2D_{x_j}v_2) 
-\sum\limits_{i=1}^{n}b_L^i(LD_{y_i}v_2+L^2D_{x_i}v_2).
\end{aligned}
\end{equation*}
Then we have
\begin{equation*}
\|\epsilon v_L - \tilde{H}(x,\frac{x}{L},Dv_L-p,D^2v_L)\| \leq C(L^2\|v_2\| + L\|D_yv_2\| + L\|D_{xy}v_2\|+ L^2\|D_xv_2\| + L^2\|D_x^2v_2\|).
\end{equation*}

To proceed,
we require estimates for $\|v_2\|$ and its derivatives.
\begin{lemma}
For any $\sigma \in (0,1)$,
there exists $C$ such that
$$L^2\|v_2\| + L^2\|D_xv_2\| + L^2\|D_x^2v_2\| + L\|D_yv_2\| + L\|D_{xy}v_2\| \leq C\Theta_\sigma(L;(A,B,c)).$$
\end{lemma}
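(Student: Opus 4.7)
The key observation is that $\iota(F(x,\cdot))=0$ identically in $x$, which follows directly by comparing the definition of the forcing $F$ with equation \eqref{limitat0} and invoking the linearity of $\iota$. This is exactly what drives $L^2 v_2 \to 0$ as $L\to 0^+$; the lemma merely quantifies the rate, simultaneously for the sup-norm and for the listed first- and second-order derivatives.

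For the $L^\infty$ bound on $L^2 v_2$, the plan is to replay the shift-and-comparison argument used in the existence proof of $\iota(F)$ with $\epsilon=L^2$. For each $x$ and each $z\in\RR^n$, pick $\tilde z$ with $|\tilde z|\le R$ such that $\|A(\cdot+z)-A(\cdot+\tilde z)\|+\|F(x,\cdot+z)-F(x,\cdot+\tilde z)\|\le\rho(R;(A,F(x,\cdot)))$. The Bernstein estimate $L\|D_y v_2\|\le C$ controls $v_2$ across the short shift $\tilde z$, while applying comparison to the shifted equation handles the long shift $z-\tilde z$, giving
\[
L^2\|v_2(x,\cdot)-v_2(x,\cdot+z)\|_{L^\infty_y}\le C\bigl(LR+\rho(R;(A,F(x,\cdot)))\bigr).
\]
Since $\iota(F(x,\cdot))=0$, Proposition~\thref{characterization of iota} forces $L^2 v_2(x,\cdot)$ to take both non-positive and non-negative values in $y$, so its $L^\infty_y$-norm is bounded by its oscillation, hence by the right-hand side above. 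Using $\rho(R;F(x,\cdot))\le C\rho(R;(A,B,c))$ (valid uniformly in $x$ thanks to the uniform $C^2$-bound on $v_0$ from standard elliptic regularity for \eqref{limitat0}) and optimizing over $R$ yields $L^2\|v_2\|_{L^\infty}\le C\Theta_1(L;(A,B,c))\le C\Theta_\sigma(L;(A,B,c))$.

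The $x$-derivative bounds follow by differentiating the $v_2$-equation once or twice in $x$: each $D_x^k v_2$ satisfies an elliptic problem in $y$ with forcing $D_x^k F(x,\cdot)$. Differentiating \eqref{limitat0} in $x$ gives $\iota(D_x^k F(x,\cdot))=0$, and the quantitative almost-periodicity $\rho(R;D_x^k F(x,\cdot))\le C\rho(R;(A,B,c))$ still holds because $v_0\in C^3$ with bounded derivatives (elliptic regularity for \eqref{limitat0}) and $\tilde{c}$ is smooth and almost periodic. The argument of the previous paragraph then applies verbatim, producing $L^2\|D_xv_2\|+L^2\|D_x^2 v_2\|\le C\Theta_\sigma(L;(A,B,c))$.

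The delicate step, and the expected main obstacle, is the $y$-gradient estimate $L\|D_yv_2\|$ (and analogously $L\|D_{xy}v_2\|$). Bernstein's method supplies only the uniform bound $L\|D_yv_2\|\le C$; a naive combination of this with the already-proved $L^\infty$ decay through interior elliptic gradient estimates would give merely $L\|D_yv_2\|\le C\sqrt{\Theta_\sigma}$, strictly weaker than the claim. To recover the full $\Theta_\sigma$ rate, the plan is to run a refined Bernstein-type argument on the auxiliary function $\phi:=|D_yv_2|^2+\kappa(L^2v_2)^2$ for suitably small $\kappa>0$: the established smallness of $L^2 v_2$ enters as a forcing term in the resulting elliptic inequality for $\phi$, and the maximum principle converts that smallness into pointwise control of $|D_yv_2|$. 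The same scheme, applied to the $x$-differentiated equation, yields $L\|D_{xy}v_2\|\le C\Theta_\sigma$. The freedom to choose any $\sigma\in(0,1)$ (rather than $\sigma=1$) reflects precisely the loss incurred in the final interpolation between the sup-norm rate and the Lipschitz regularity.
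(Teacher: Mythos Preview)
Your treatment of $L^2\|v_2\|$ and of the $x$-derivatives is essentially the paper's: shift-and-compare in $y$, split the shift into a short part ($|\tilde z|\le R$) and an almost-period, and use $\iota(F(x,\cdot))=0$ to pass from oscillation to sup-norm. That part is fine.

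The gap is in your plan for $L\|D_yv_2\|$. The proposed Bernstein argument on $\phi=|D_yv_2|^2+\kappa(L^2v_2)^2$ does not deliver the rate: writing out $a^{ij}D_{y_iy_j}\phi$ and evaluating at a maximum, the cross term $D_kv_2\, D_ka^{ij}\,D_{ij}v_2$ forces (after Young) a term $C|D_yv_2|^2$ on the right, and the only absorbing coefficient on the left is $(L^2+\kappa\alpha_m)$. For small $L$ this requires $\kappa$ large, not small; but then $\kappa(L^2v_2)^2$ dominates $\phi_{\max}$ and you recover nothing better than $(L\|D_yv_2\|)^2\lesssim\Theta_1$, i.e.\ the $\sqrt{\Theta}$ you already flagged as insufficient. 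The smallness of $L^2v_2$ cannot feed back into the gradient here because the genuine forcing in the differentiated equation is $D_yF$, which is $O(1)$.

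The paper avoids Bernstein entirely for this term. It observes that the elliptic estimate on the shifted equation already yields simultaneously
\[
L^2\|v_2(x,\cdot)-v_2(x,\cdot+z)\|+L\|D_yv_2(x,\cdot)-D_yv_2(x,\cdot+z)\|\le C\bigl(\|A(\cdot)-A(\cdot+z)\|+\cdots\bigr),
\]
so the long-shift part of $L\,D_yv_2$ is controlled by $\rho(R;(A,B,c))$ exactly as for $L^2v_2$. For the short shift $\tilde z$ one cannot use Lipschitz regularity of $D_yv_2$ (no uniform second-derivative bound is available), so instead one rescales $w(z)=L^2v_2(x,z/L)$, which solves $a^{ij}(z/L)D_{z_iz_j}w-w=-F$; interior $W^{2,p}$ estimates for any $p<\infty$ give $[D_zw]_{C^{0,\sigma}}\le C$ for every $\sigma\in(0,1)$, i.e.\ $L\|D_yv_2(x,\cdot)-D_yv_2(x,\cdot+\tilde z)\|\le C(L|\tilde z|)^\sigma\le C(LR)^\sigma$. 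Combining the two pieces and optimizing over $R$ gives $\Theta_\sigma$; the passage from oscillation to sup-norm then uses that $v_2$ is bounded, so along each line $D_yv_2$ must take arbitrarily small values. This is where the exponent $\sigma<1$ genuinely enters --- it is the H\"older exponent from $W^{2,p}\hookrightarrow C^{1,\sigma}$, not an interpolation loss.
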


\begin{proof}
For any $z \in \RR$, let $\varphi(x,y) = v_2(x,y) - v_2(x,y+z)$.
Then $\varphi$ solves
$$\sum\limits_{i,j=1}^{n}a^{ij}(y)D_{y_iy_j}\varphi - L^2\varphi = F(x,y+z)-F(x,y) + \hbox{tr}((A(y+z) - A(y))D_y^2v_2(x,y+z)).$$
Note that 
$$\|D^2v_0\| + \|Dv_0\| + \|\epsilon v_0\| \leq C$$
for some $C \in \RR$ independent of $\epsilon$ and $L$ by \eqref{bound}, 
by classical elliptic regularity theory, we obtain that 
\begin{equation*}
\begin{aligned}
&L^2\|v_2(x,\cdot) - v_2(x,\cdot+z)\| 
+ L \|D_yv_2(\cdot) - D_yv_2(\cdot+z)\| \\
&\leq C(\|A(\cdot) - A(\cdot+z)\| 
+ \|B(\cdot) + B(\cdot+z)\| 
+ \|c(\cdot) + c(\cdot+z)\|).  
\end{aligned}
\end{equation*}
For any $R > 0$, there exists $\tilde{z} \in B(0,R)$ such that
$$\|A(\cdot+\tilde{z}) - A(\cdot+z)\| 
+ \|B(\cdot+\tilde{z}) - B(\cdot+z)\| 
+ \|c(\cdot+\tilde{z}) - c(\cdot+z)\| 
\leq \rho(R;(A,B,c)).$$
Therefore we have
\begin{equation*}
\begin{aligned}
L^2\|v_2(x,\cdot) - v_2(x,\cdot + z)\|  
&\leq L^2\|v_2(x,\cdot) - v_2(x,\cdot+\tilde{z})\| 
+ L^2\|v_2(x,\cdot+\tilde{z}) - v_2(x,\cdot+z)\|\\
&\leq CLR + C\rho(R;(A,B,c)). 
\end{aligned}
\end{equation*}
This yields that
$$L^2\|v_2(x,\cdot) - v_2(x,\cdot+z)\| \leq C\Theta_1(L;(A,B,c))$$
for any $z \in \RR^n$.
Thus for any $x,y,z \in \RR^n$, we conclude that
$$L^2|v_2(x,y) - v_2(x,z)| \leq C\Theta_1(L;(A,B,c)). $$
Note that by \thref{characterization of iota}, 
for any $x \in \RR$, 
there always exists $z \in \RR^n$ such that $|v_2(x,z)| < \Theta_1(L;(A,B,c))$, 
and therefore the conclusion follows for $L^2\|v_2\|$.

Similarly,
for $L\|D_y v_2\|$,
we have
\begin{equation*}
\begin{aligned}
L\|D_yv_2(x,\cdot) - D_yv_2(x,\cdot + z)\|  
&\leq L\|D_yv_2(x,\cdot) - D_yv_2(x,\cdot+\tilde{z})\|  \\
&\quad + L\|D_yv_2(x,\cdot+\tilde{z}) - D_yv_2(x,\cdot+z)\|\\
&\leq C_\sigma(LR)^\sigma + C\rho(R;(A,B,c)),
\end{aligned}
\end{equation*}
where we have used standard elliptic regularity theory.
Thus we have
$$L|D_yv_2(x,y) - D_yv_2(x,z)| \leq C_\sigma\Theta_\sigma(L;(A,B,c))$$
for any $x,y,z \in \RR^n$.
Note that $v_2$ is bounded,
which implies that for any $x \in \RR^n$, 
there exists $z \in \RR^n$ such that $|D_yv_2(x,z)| \leq \Theta_\sigma(L;(A,B,c))$,
therefore we finally obtain that
$$L\|D_yv_2\| \leq C_\sigma \Theta_\sigma(L;(A,B,c)).$$

The remaining terms can be estimated by similar arguments, which we omit here.
\end{proof}

Using the preceding estimates and the comparison principle,
we obtain
$$\epsilon\|u_L - v_L\| \leq C_\sigma\Theta_\sigma(L;(A,B,c)),$$
This lead to the main convergence rate results:

\begin{theorem}\thlabel{ratezero}
For any $p \in \RR^n$, we have
$$|\lambda(L,p) - \overline\lambda(p)| \leq C_\sigma\Theta_\sigma(L;(A,B,c)),$$
where $C_\sigma$ depends only on $\sigma, p$ and the coefficients of \eqref{equation}.
\end{theorem}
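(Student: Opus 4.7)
The plan is to assemble the pieces built up immediately above the theorem. Fix $p \in \RR^n$ and $\epsilon > 0$, and consider the two-scale ansatz $v_L(x) := v_0(x) + L^2 v_2(x, x/L)$, where $v_0$ solves the homogenised problem \eqref{limitat0} and $v_2(x,\cdot)$ solves the $y$-corrector equation introduced earlier in the section. The strategy is to show that $v_L$ is a $C_\sigma \Theta_\sigma(L;(A,B,c))$-approximate solution of the approximating equation \eqref{approximation}, then to invoke the comparison principle to transfer this closeness to $u_L$, and finally to pass to the limit $\epsilon \to 0^+$ using the uniform convergences $\epsilon u_L \to \lambda(L,p)$ and $\epsilon v_0 \to \lambda(p)$.

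First, I would substitute $v_L$ into \eqref{approximation}. By design of $v_0$ and $v_2$, the leading-order contributions cancel, leaving a residual $R_L$ that, as in the display just above the lemma, satisfies $\|R_L\| \leq C(L^2\|v_2\|+L^2\|D_xv_2\|+L^2\|D^2_xv_2\|+L\|D_yv_2\|+L\|D_{xy}v_2\|)$. Invoking the lemma bounds each of these five quantities by $C_\sigma\Theta_\sigma(L;(A,B,c))$, so $\|R_L\|\leq C_\sigma\Theta_\sigma(L;(A,B,c))$.

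Next, with $v_L$ serving as an almost-solution and $u_L$ as the exact solution, I would apply the comparison principle from \cite{crandall1992user} to $u_L$ against the perturbations $v_L \pm \epsilon^{-1} C_\sigma \Theta_\sigma(L;(A,B,c))$ to conclude $\epsilon\|u_L - v_L\|_{L^{\infty}(\RR^n)} \leq C_\sigma\Theta_\sigma(L;(A,B,c))$. Then I would pass $\epsilon \to 0^+$: from the proof of \thref{maintheoremzero}, $\epsilon u_L(x) \to \lambda(L,p)$ uniformly in $x$, and the same argument applied to \eqref{limitat0} gives $\epsilon v_0(x) \to \lambda(p)$. Since $\|L^2 v_2(\cdot,\cdot/L)\| \leq C_\sigma\Theta_\sigma(L;(A,B,c))$, the difference $\epsilon\|v_L-v_0\|$ becomes negligible as $\epsilon \to 0^+$, so evaluating at any $x_0 \in \RR^n$ and sending $\epsilon\to 0^+$ yields the desired bound $|\lambda(L,p)-\lambda(p)| \leq C_\sigma\Theta_\sigma(L;(A,B,c))$.

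The main obstacle I anticipate is tracking the $\epsilon$-independence of the constant $C_\sigma$. The lemma's estimates for the derivatives of $v_2$ depend on the a priori bounds $\|v_0\|$, $\|Dv_0\|$, $\|D^2 v_0\|$, and $\|\epsilon v_0\|$, which must themselves be uniform in $\epsilon$; this is guaranteed by the Bernstein method combined with \eqref{spreading occurs} and the uniform $C^{2,\gamma}$-regularity of the coefficients. A secondary subtlety is justifying the comparison principle for \eqref{approximation} despite its quadratic growth in the gradient, which is handled through the Lipschitz bounds already established for both $u_L$ and $v_L$.
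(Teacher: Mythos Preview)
Your proposal is correct and follows essentially the same approach as the paper: the paper establishes the bound $\epsilon\|u_L-v_L\|\leq C_\sigma\Theta_\sigma(L;(A,B,c))$ via the two-scale ansatz, the lemma's residual estimate, and the comparison principle, and then states that this ``implies'' \thref{ratezero}. Your write-up actually supplies the $\epsilon\to0^+$ passage that the paper leaves implicit, so it is slightly more detailed than the paper's own argument; note also that the displayed \emph{proof} environment following \thref{ratezero} in the paper is really the derivation of \thref{zerorate} from \thref{ratezero}, not the proof of \thref{ratezero} itself.
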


\begin{proof}
We have
\begin{equation*}
\begin{aligned}
|\lambda(L,p) -\overline \lambda(p)|
&\leq \|\lambda(L,p) - \epsilon u_L\|
+ \epsilon\|u_L - v_L\|
+ \epsilon \|v_L - v\|
+ \|\epsilon v - \overline\lambda(p)\| \\
&\leq C_\sigma \Theta_\sigma(L;(A,B,c)) 
+ \|\lambda(L,p) - \epsilon u_L\|
+ \|\epsilon v -\overline \lambda(p)\|.
\end{aligned}
\end{equation*}
The result follows by letting $\epsilon \to 0$.
\end{proof}

\begin{proof}
[\textbf{Proof of \thref{zerorate}}]
We have shown that for any $L > 0$ and $e \in \mathbb{S}^{n-1}$ there exists $p^L$ satisfying
$$\omega(e;L) = \frac{\lambda(L,p^L)}{p^L \cdot e}$$
with uniform bounds
$$0 < m < p^L \cdot e < M < \infty, \quad |p^L| \leq M$$
for all $L > 0$ and some $m, M \in \RR$.
For any $p \in \RR^n$ with $m < p \cdot e < M$, 
the preceding results imply
$$\omega(e;0) (p \cdot e) \leq\overline \lambda(p) \leq \lambda(L,p) + C_\sigma\Theta_\sigma(L;(A,B,c))$$
for some $s \in (0,1]$.
Since $m < p \cdot e < M$, 
it follows that
$$\omega(e;0) \leq \frac{\lambda(L,p)}{p \cdot e} + C_\sigma\Theta_\sigma(L;(A,B,c)).$$
Taking $p = p^L$ gives
$$\omega(e;0) \leq \omega(e;L) + C_\sigma\Theta_\sigma(L;(A,B,c)).$$
Conversely, from the inequality
$$\omega(e;L) (p \cdot e) \leq \lambda(L,p) \leq \overline\lambda(p) + C_\sigma\Theta_\sigma(L;(A,B,c)),$$
we obtain the reverse inequality.

In the periodic case,
one readily shows that $\Theta_\sigma(L;(A,B,c)) \leq CL^\sigma$.
\end{proof}

\section{Limit behavior when $L \to \infty$: Proof of \thref{maintheoreminfinity,infity limit,infinityrate}}

In this section, we prove the limit behavior of the generalized principal eigenvalue $\lambda(L,p)$ as $L \to \infty$.

We begin by approximating the generalized principal eigenvalues of $\Lcal(L,p)$ via a scaled Hamiltonian.

\begin{proposition}
The generalized principal eigenvalue $\lambda(L,p)$ of $\Lcal(L,p)$ coincides with the effective Hamiltonian of 
\begin{equation}\label{Hamiltoninfinity}
\tilde{H}(Lx,x,Dv-p,\frac{1}{L}D^2v) = \lambda.
\end{equation}
\end{proposition}

\begin{proof}
For any $\epsilon > 0$,
let $u_\epsilon$ ba a bounded and Lipschitz continuous viscosity solution of
$$\tilde{H}(x,\frac{x}{L},Du_\epsilon-p,D^2u_\epsilon) = \epsilon u_\epsilon.$$
Define the scaled function $v_\epsilon(x) = u_\epsilon(Lx)/L$, then $v_\epsilon$ satisfies
$$\tilde{H}(Lx,x,Dv_\epsilon-p,\frac{1}{L}D^2v_\epsilon) = \epsilon L v_\epsilon.$$
Passing to the limit as $\epsilon \to 0$,
we conclude that $\lambda(L,p)$ is the effective Hamiltonian of \eqref{Hamiltoninfinity}.
\end{proof}

This provides a characterization of $\lambda(L,p)$ via \eqref{Hamiltoninfinity}.

\begin{proof}
[\textbf{Proof of \thref{maintheoreminfinity}}]
Fixed $\epsilon > 0$, 
for any $p \in \RR^n$ and $L > 0$, 
let $v_p^L$ be the unique solution of
\begin{equation}\label{corrector}
\tilde{H}(Lx,x,Du-p,\frac{1}{L}D^2u) = \epsilon u.
\end{equation} 
By \eqref{bound}, 
there exists $M$ depending only on the coefficients of the equation and $p$ such that
\begin{equation}\label{bounds}
\|\epsilon v_p^L\| + \|Dv_p^L\| + \|\frac{1}{L}D^2v_p^L\| \leq M.
\end{equation} 
Therefore, 
$\{v_p^L\}_{L > 1}$ is uniformly bounded and equicontinuous. 
By the Arzela-Ascoli theorem, 
there exists a subsequence of
$\{v_p^{L}\}_{L > 0}$ converges to some $v_p \in C(\RR^n)$ locally uniformly as $L \to \infty$.

We will show that $v_p$ is indeed an approximate corrector that characterizes the limit of the generalized principal eigenvalues.
Specifically, 
for any $x_0 \in \RR^n$, 
assume $\phi \in C^2(\RR^n)$ touches $v_p$ from below at $x = x_0$. 
To identify the equation that $v_p$ solves, 
consider the solution of the following problem
$$\tilde{H}(y,x_0,D_y\psi+q,D_y^2\psi) = \overline{H}(x_0,q),$$
where $\overline{H}(x_0,q)$ is the effective Hamiltonian and $\psi = \psi(y)$ is sublinear at infinity. 
However, as we have described before, 
there is no such solution in general and thus we let $\psi = \psi(y)$ be a $\delta$-approximate corrector of the equation, 
that is, 
\begin{equation}\label{trans}
\|\tilde{H}(\cdot,x_0,D_y\psi+q,D_y^2\psi) - \overline{H}(x_0,q)\| \leq \delta.
\end{equation}
Moreover, $\|\psi\|_{W^{2,\infty}(\RR^n)} \leq C$ for some $C > 0$.
Now let $q = D\phi(x_0) - p$ and 
$$\phi^L(x) = \phi(x) + \frac{1}{L}\psi(Lx).$$
Since $v_p^L, \phi^L$ converges to $v,\phi$ locally uniformly respectively, 
there exists $\{x_L\}_{L > 0}$ such that $v_p^L - \phi^L$ attains its minimum at $x_L$ and $x_L \to x_0$ as $L \to \infty$ \cite{tran2021hamilton}.
Thus
$$Dv_p^L(x_L) = D\phi(x_L) + D\psi(Lx_L) \quad \hbox{ and } \quad 
D^2v_p^L(x_L) \geq D^2\phi(x_L) + LD^2\psi(Lx_L).$$
Substituting these into \eqref{corrector} at $x = x_L$ gives
\begin{equation*}
\begin{aligned}
\epsilon v_p^L(x_L) 
&= \tilde{H}(Lx_L, x_L, Dv_p^L-p,\frac{1}{L}D^2v_p^L) \\
& \geq  \tilde{H}(Lx_L, x_L, D\phi(x_L) + D\psi(Lx_L) - p, \frac{1}{L}D^2\phi(x_L) + D^2\psi(Lx_L)).
\end{aligned}
\end{equation*}
Using \eqref{trans} we estimate
\begin{equation*}
\begin{aligned}
&|\tilde{H}(Lx_L,x_L,D\phi(x_L)+D\psi(Lx_L)-p, \frac{1}{L}D^2\phi(x_L) 
+ D^2\psi(Lx_L)) 
- \overline{H}(x_0,q)| \\
&\leq \frac{1}{L}|\hbox{tr}(A(x_L)D^2\phi(x_L))| 
+ C|A(x_L) - A(x_0)| 
+ |D\phi(x_L) - D\phi(x_0)| \\
&\quad + |D\phi(x_L) - D\phi(x_0)|^2 
+ |B(x_L)- B(x_0)| 
+ |c(x_L) - c(x_0)| 
+ \delta\\
&\leq \rm o(1) + \delta,
\end{aligned}
\end{equation*}
where $o(1) \to 0$ as $L \to +\infty$.
Therefore we obtain 
$$\epsilon v_p^{L}(x_L) \geq \overline{H}(x_0,D\phi-p) - o(1) - \delta.$$
Taking the limit as $L \to \infty$, followed by $\delta \to 0$, 
shows that $v_p$ is a viscosity supersolution of
\begin{equation}\label{HJ}
\epsilon v - \overline{H}(x,Dv-p) = 0.
\end{equation}
A parallel argument shows that $v_p$ is also a viscosity subsolution,
and hence a viscosity solution of \eqref{HJ}.

The remainder of the proof follows an argument similar to that of \thref{maintheoremzero},
provided we establish that
\begin{equation*}
\lim\limits_{\epsilon \to 0} \mathop{\mathrm{osc}}\limits_{\RR^n} \epsilon v_p = 0,
\end{equation*}
To prove this, 
one may adapt the method of Isshi\cite{ishii2000almost},
under the assumption that $\overline{H}$ satisfies the hypothesis of \thref{Ishii}.
We verify these conditions later in \thref{assupmtion check}.
\end{proof}

Let us gather some propositions $\overline{H}$ satisfies.

\begin{proposition}\thlabel{smoothness of the effective Hamiltonian}
For any $p \in \RR^n$,
$\overline{H}(\cdot,p) \in C^{2+\gamma}(\RR^n)$.

For any $x \in \RR^n$,
$\overline{H}(x,\cdot)$ is smooth.
\end{proposition}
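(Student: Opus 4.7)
Plan. The cleanest route is via Hopf--Cole reduction. Setting $\phi(y) = e^{\psi(y) + q \cdot y}$ converts the cell problem $H(x, y, D_y \psi + q, D_y^2 \psi) = \overline{H}(x, q)$ into the linear eigenvalue equation
$$
a^{ij}(x) D_{y_i y_j} \phi + b^i(x) D_{y_i} \phi + (c(x) + \tilde{c}(y)) \phi = \overline{H}(x, q) \phi
$$
with $\phi > 0$ and $\phi / e^{q \cdot y}$ bounded above and below with positive infimum. Writing $\phi = e^{q \cdot y} \tilde\phi$ with $\tilde\phi \in \Acal$, I see that $\overline{H}(x, q)$ is precisely the Berestycki--Nadin generalized principal eigenvalue (in $y$) of
$$
\tilde{\Lcal}(x, q) \tilde\phi := a^{ij}(x) D_{y_i y_j} \tilde\phi + (2 a^{ij}(x) q_j + b^i(x)) D_{y_i} \tilde\phi + (a^{ij}(x) q_i q_j + b^i(x) q_i + c(x) + \tilde{c}(y)) \tilde\phi.
$$
The crucial structural observation is that the only $y$-dependence of $\tilde{\Lcal}(x, q)$ sits in $\tilde{c}(y)$, independent of $(x, q)$; the parameters enter solely through coefficients that are constant in $y$, polynomial in $q$, and in $C^{2,\gamma}$ in $x$ by hypothesis. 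This reduces the proposition to smoothness of the generalized principal eigenvalue as a function of these finite-dimensional parameters.

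I will then differentiate $\overline{H}$ through the approximation scheme used to define the effective Hamiltonian. For each $\delta > 0$ and each $(x_0, q_0)$, let $\psi^\delta_{x_0, q_0}$ be a bounded Lipschitz $\delta$-approximate corrector of the Hamilton--Jacobi form of the eigenvalue problem at $(x_0, q_0)$. Formally differentiating the approximate corrector equation in $q$ and setting $w = \partial_q \psi^\delta$ yields a \emph{linear} uniformly elliptic PDE in $y$ with bounded, almost periodic coefficients and a bounded source, both inherited from the polynomial $q$-structure and the $C^{2,\gamma}$ regularity of the data. Applying Bernstein-type gradient bounds and interior Schauder estimates to this linearized equation gives uniform $C^{2,\gamma}_{\mathrm{loc}}$ bounds on $w$ in $y$. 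Passing to $\delta \to 0$ and averaging against the approximate eigenfunctions identifies $\partial_q \overline{H}(x_0, q_0)$. Since the coefficients of $\tilde{\Lcal}$ are polynomial in $q$, iterating produces derivatives of all orders, yielding smoothness of $\overline{H}(x, \cdot)$. The $C^{2+\gamma}(\RR^n)$ regularity in $x$ follows by the same scheme with $x$ replacing $q$: differentiating twice in $x$ produces coefficients of $C^\gamma$ regularity (from the $C^{2,\gamma}$ hypothesis on $A, B, c$), and Schauder theory propagates this to $\overline{H}(\cdot, q)$.

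The main obstacle is guaranteeing that the linearized equation for the parameter-derivative admits a bounded Lipschitz solution in the almost periodic setting, where $\tilde{\Lcal}(x,q)$ has no genuine principal eigenfunction. This demands a Fredholm-type compatibility condition at each differentiation step: the source of the linearized equation must, in an averaged sense, be balanced by the derivative of the effective constant. I verify this condition through the same Lions--Souganidis framework that defines $\overline{H}$, exploiting that only $\tilde{c}$ carries the almost periodic oscillation so that each linearized problem is itself an almost periodic homogenization problem of the same type, whose effective constant is precisely the derivative of $\overline{H}$ being sought. The shift-invariance in $y$ together with uniform ellipticity supplies the compactness required to pass to the limit $\delta \to 0$ in the linearized problem and to identify the limit with the derivative.
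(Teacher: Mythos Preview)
Your strategy is essentially the paper's: differentiate the approximation problem in the parameters $(x,q)$, obtain a linear uniformly elliptic equation in $y$ with almost periodic data (only $\tilde c(y)$ carries the oscillation), and identify the effective constant of this linearized problem as the derivative of $\overline H$ via the Lions--Souganidis machinery. The structural observation that $x$ and $q$ enter only through the constant-in-$y$ coefficients is exactly what drives the argument in both cases, and the iteration to higher derivatives proceeds the same way.

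Two remarks. First, the Hopf--Cole detour is harmless but adds nothing: differentiating the quasilinear Hamilton--Jacobi equation in a parameter already yields a \emph{linear} equation for the derivative, so passing to the linear eigenvalue form beforehand is not needed. The paper works directly with the nonlinear corrector equation. Second, and more substantively, you should replace the $\delta$-approximate correctors $\psi^\delta_{x_0,q_0}$ by the solutions $\phi_\epsilon(y;x,q)$ of the regularized equation $H(x,y,D\phi_\epsilon+q,D^2\phi_\epsilon)=\epsilon\phi_\epsilon$. A $\delta$-approximate corrector only satisfies a two-sided \emph{inequality} and is not unique, so ``$\partial_q\psi^\delta$'' is not well defined and the formal differentiation step does not make sense as written. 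The $\epsilon$-regularized problem, by contrast, has a unique bounded solution depending on $(x,q)$ with uniform $C^2$ bounds; the paper first takes finite differences in $x$ (or $p$) to get Lipschitz control, then differentiates the equality to obtain the linear equation for $D_{x_k}\phi_\epsilon$, and finally lets $\epsilon\to 0$. Once you make this substitution your outline goes through and coincides with the paper's proof.
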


\begin{proof}
For any $x,p \in \RR^n$ and $\epsilon > 0$,
let $\phi_\epsilon(y) = \phi_\epsilon(y;x,p)$ be the unique bounded and uniformly continuous solution of
\begin{equation}\label{regularity in x}
\tilde{H}(y,x,D\phi_\epsilon + p, D^2\phi_\epsilon) = \epsilon \phi_\epsilon.
\end{equation}
For a fixed $z \in \RR^n$,
define $\tilde{\phi}_\epsilon(y) = \phi_\epsilon(y;x+z,p)$,
and set $\varphi = \phi_\epsilon - \tilde{\phi}_\epsilon$.
Then $\varphi$ satisfies
\begin{equation*}
\begin{aligned}
&\sum\limits_{i,j=1}^{n}a^{ij}(x)D_{ij}\varphi 
+ \sum\limits_{i,j=1}^{n}a^{ij}(x)D_i\varphi D_j\varphi  \\ 
&+ 2a^{ij}(x)(D_i\tilde{\phi}_\epsilon + p_i)D_j\varphi 
+ \sum\limits_{i,j=1}^{n}b^i(x)D_i\varphi - \epsilon \varphi = F(x,y),
\end{aligned}
\end{equation*}
where 
\begin{equation*}
\begin{aligned}
F(x,y) = &\sum\limits_{i,j=1}^{n}(a^{ij}(x+z) - a^{ij}(x))(D_{ij}\tilde{\phi}_\epsilon + (D_i\tilde{\phi}_\epsilon + p_i)(D_j\tilde{\phi}_\epsilon + p_j)) \\
&+ \sum\limits_{i=1}^{n}(b^i(x+z) - b^i(x))(D_i\tilde{\phi}_\epsilon + p_i) + c(x+z) - c(x).
\end{aligned}
\end{equation*}
From the regularity estimate of $\tilde{\phi}_\epsilon$,
it follows that 
\begin{equation*}
\|F\| \leq C(\|A(x+z) - A(x)\| + \|B(x+z) - B(x)\| + \|c(x+z) - c(x)\|).
\end{equation*}
By regularity theory and uniformly continuity of $A,B,c$,
we have
\begin{equation}\label{Lipschitz in x}
\|D^2(\phi_\epsilon - \tilde{\phi}_\epsilon)\| + \|D(\phi_\epsilon - \tilde{\phi}_\epsilon)\| + \epsilon \|(\phi_\epsilon - \tilde{\phi}_\epsilon)\| \leq \|F\| \leq C|z|.
\end{equation}
In particular,
\begin{equation*}
\|\epsilon \phi_\epsilon - \epsilon \tilde{\phi}_\epsilon\| \leq C|z|.
\end{equation*}
Passing to the limit as $\epsilon \to 0$ yields
\begin{equation*}
|\overline{H}(x,p) - \overline{H}(x+z,p)| \leq C|z|.
\end{equation*}
Since the constant $C$ is bounded locally uniformly in $p$,
we conclude that $\overline{H}$ is uniformly Lipschitz continuous in $x$, locally uniformly in $p$.

Differentiating \eqref{regularity in x} with respect to $x_k$,
we find that $\varphi(y) = D_{x_k}\phi_\epsilon(y;x,p)$ satisfies
\begin{equation*}
\sum\limits_{i,j=1}^{n}a^{ij}(x)D_{ij}\varphi 
+ \sum\limits_{i,j=1}^{n}2a^{ij}(x)(D_i\phi_\epsilon + p_i)D_j\varphi 
+ \sum\limits_{i=1}^{n}b^i(x)D_i\varphi - \epsilon \varphi = K(x,y),
\end{equation*}
where 
\begin{equation*}
\begin{aligned}
K(x,y) 
&= -\sum\limits_{i,j=1}^{n}D_ka^{ij}(x)D_{ij}\phi_\epsilon 
- \sum\limits_{i,j=1}^{n}D_ka^{ij}(x)(D_i\phi_\epsilon + p_i)(D_j\phi_\epsilon + p_j) \\
&\quad - \sum\limits_{i=1}^{n}D_kb^i(x)D_i\phi_\epsilon - D_kc(x).
\end{aligned}
\end{equation*}
The source term $K$ is bounded.
Applying \eqref{Lipschitz in x},
we obtain
\begin{equation*}
\|D^2\varphi\| + \|D\varphi\| + \|\epsilon \varphi\| \leq C.
\end{equation*}
By the uniform almost periodicity of $K(x,y)$ in $y$ (established in\eqref{uniform a.p. of corrector} below) 
and the arguments in \cite{lions2005homogenization},
it follows that
\begin{equation*}
\lim\limits_{\epsilon \to 0}\mathop{\mathrm{osc}}\limits_{\RR^n} \ \epsilon\varphi = 0.
\end{equation*}
Consequently, 
the limit $\lim_{\epsilon \to 0}\epsilon D_{x_k}\phi_\epsilon(y;x,p)$ exists and equals $D_{x_k}\overline{H}(x,p)$,
establishing the differentiability of $\overline{H}$ in $x$.

Higher regularity in $x$ and $p$ can be established by similar arguments involving
higher order difference of $\phi_\epsilon$.
\end{proof}

\begin{remark}
By fixing $L$, we also proved \thref{convexity} here. From the proof of this proposition, we can also obtained that the derivatives of $\overline{H}$ are bounded uniformly in $x$, locally in $p$,
provided the coefficients are smooth with bounded derivatives.
\end{remark}

\begin{proposition}\thlabel{assupmtion check}
$\overline{H}$ satisfies the conditions stated in \thref{Ishii}.
\end{proposition}

\begin{proof}
By \thref{characterization of effective Ham},
if we take $\psi \equiv 1$ as a test function,
then we see that there exists $0< \alpha_1 < \alpha_2 < +\infty$ and $C > 0$ such that
$$\alpha_1 |p|^2 - C \leq \overline{H}(x,p) \leq \alpha_2 |p|^2 + C,$$
therefore $\overline{H}$ satisfies (1) and (3).
(2) is implied by \thref{smoothness of the effective Hamiltonian},
(4) follows from the convexity of $\tilde{H}$ in $p$,
and (5) is a consequence of \eqref{Lipschitz in x}.
\end{proof}

We now turn to the convergence rate of the generalized principal eigenvalues. 
For this purpose,
we introduce the family of functions $\psi_\delta = \psi_\delta(y;x,p)$,
defined as solutions to
\begin{equation}\label{approxcorrector}
\tilde{H}(y,x,D_y\psi_\delta+p,D_y^2\psi_\delta) = \delta \psi_\delta + \overline{H}(x,p).
\end{equation}

First, we have

\begin{lemma}\thlabel{oscillationestimate}
There exists $C(p) > 0$ such that
$$\mathop{\mathrm{osc}}\limits_{\RR^n}\ \delta\psi_\delta(\cdot;x,p) \leq C(p)\Theta_1(\delta;\tilde{c})$$
for all $\delta \in (0,1)$ and $x \in \RR^n$.
\end{lemma}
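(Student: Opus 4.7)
The plan is to exploit the structure of equation \eqref{approxcorrector}: only the reaction term $\tilde{c}(y)$ depends on $y$ (the coefficients $a^{ij}, b^i, c$ depend only on the frozen parameter $x$), so any $y$-translate difference of $\psi_\delta$ will satisfy a \emph{linear} uniformly elliptic equation whose right-hand side is controlled entirely by a translate-difference of $\tilde{c}$. I would then combine this with the quantitative almost periodicity of $\tilde{c}$ encoded by $\rho(\cdot;\tilde{c})$ to extract the modulus $\Theta_1$.

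First, I would record the a priori estimate
$$\|\delta\psi_\delta\|_\infty + \|D_y\psi_\delta\|_\infty + \|D_y^2 \psi_\delta\|_\infty \leq C(p),$$
uniform in $\delta\in(0,1)$ and $x\in\RR^n$. This follows from the same classical Bernstein method and elliptic regularity argument already invoked for \eqref{bound}, combined with the quadratic coercivity of $H$ in the gradient variable. In particular, $\delta\psi_\delta(\cdot;x,p)$ is $C(p)\delta$-Lipschitz in $y$. Next, for arbitrary translation vectors $w_1, w_2 \in \RR^n$, I set $\varphi(y) := \psi_\delta(y+w_1;x,p) - \psi_\delta(y+w_2;x,p)$. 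Subtracting the equations satisfied by the two translated correctors and linearizing the quadratic term $a^{ij}(x)(D_i\psi+p_i)(D_j\psi+p_j)$ between them, I obtain
$$a^{ij}(x)D_{ij}\varphi + G^k(y)D_k\varphi - \delta\varphi = -\bigl(\tilde{c}(\cdot+w_1) - \tilde{c}(\cdot+w_2)\bigr),$$
where $\|G^k\|_\infty \leq C(p)$ by Step 1. The $-\delta$ zero-order term together with boundedness of $\varphi$ then yields, via the $L^\infty$ maximum principle on $\RR^n$, the central inequality
$$\delta\, \|\psi_\delta(\cdot+w_1) - \psi_\delta(\cdot+w_2)\|_\infty \leq \|\tilde{c}(\cdot+w_1) - \tilde{c}(\cdot+w_2)\|_\infty.$$

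To conclude, I would take arbitrary $y_1, y_2 \in \RR^n$, set $w = y_1-y_2$, and for each $R>0$ use the definition of $\rho$ to pick $z\in B(0,R)$ with $\|\tilde{c}(\cdot+w)-\tilde{c}(\cdot+z)\|_\infty \leq \rho(R;\tilde{c})$. Writing
$$\delta|\psi_\delta(y_1)-\psi_\delta(y_2)| \leq \delta|\psi_\delta(y_2+w)-\psi_\delta(y_2+z)| + \delta|\psi_\delta(y_2+z)-\psi_\delta(y_2)|,$$
the first summand is bounded by $\rho(R;\tilde{c})$ via Step 2 and the second by $C(p)\delta R$ via the Lipschitz estimate of Step 1. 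Optimizing over $R\in(0,1/\delta)$ (the range $R\geq 1/\delta$ being absorbed into the trivial bound $\|\delta\psi_\delta\|_\infty \leq C(p)$) then gives $\mathop{\text{osc}}_{\RR^n}\delta\psi_\delta \leq C(p)\Theta_1(\delta;\tilde{c})$.

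The principal obstacle I anticipate is the rigorous application of the $L^\infty$ maximum principle to $\varphi$ on the whole space in Step 2. Since $\varphi$ need not attain its supremum, the clean route is to introduce an auxiliary barrier $\pm\eta\chi(y)$ growing mildly at infinity, use the $-\delta$ zero-order term to control it, and send $\eta\to 0^+$; alternatively, one could invoke viscosity comparison for the linear equation directly. A secondary bookkeeping matter is verifying that the Bernstein constant $C(p)$ in Step 1 is truly independent of $\delta$ and of the frozen phase $x$, which follows from the uniformity of the ellipticity \eqref{uniformelliptic} and of the quadratic coercivity of $H$ in $q$ with respect to $(x,y)$, combined with the already-established uniform bound $\|\delta\psi_\delta\|_\infty \leq C(p)$ coming from comparison with constant super/subsolutions.
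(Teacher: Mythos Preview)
Your proposal is correct and follows essentially the same route as the paper: a priori Bernstein bounds, subtracting two $y$-translates of $\psi_\delta$, applying comparison/maximum principle to bound $\delta\|\psi_\delta(\cdot+w_1)-\psi_\delta(\cdot+w_2)\|_\infty$ by $\|\tilde{c}(\cdot+w_1)-\tilde{c}(\cdot+w_2)\|_\infty$, and then splitting via an almost-period $z\in B(0,R)$ to produce $\rho(R;\tilde{c})+C(p)\delta R$. The only cosmetic difference is that the paper keeps the quadratic term $a^{ij}(x)D_i\varphi D_j\varphi$ in the equation for $\varphi$ and invokes the viscosity comparison principle directly, whereas you linearize it into a first-order coefficient $G^k$; both lead to the same $L^\infty$ estimate, and your remark about justifying the whole-space maximum principle via a barrier is a point the paper simply absorbs into its citation of the comparison principle.
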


\begin{proof}
Without loss of generality, it suffices to prove that 
\begin{equation*}
|\delta\psi_\delta(y;x,p) - \delta\psi_\delta(0;x,p)| \leq C(p)\Theta_1(\delta;\tilde{c})
\end{equation*}
for all $y \in \RR^n$.
From Bernstein's method \cite{barles2021local}, there exists $C(p) > 0$ such that for all $\delta \in (0,1)$ such that
\begin{equation}\label{bound3}
\|D_y^2\psi_\delta(\cdot;x,p)\| + \|D_y\psi_\delta(\cdot;x,p)\| + \|\delta \psi_\delta(\cdot;x,p)\| \leq C(p).
\end{equation}
Denote $\psi_\delta(y) = \psi_\delta(y;x,p)$ for simplicity, and let $\varphi(y) = \psi_\delta(y+z_1) - \psi_\delta(y+z_2)$ for any $z_1, z_2 \in \RR^n$.
Then a direct calculus computes that $\varphi$ satisfies the following equation:
\begin{equation*}
\begin{aligned}
&\sum\limits_{i,j=1}^{n}a^{ij}(x)D_{ij}\varphi(y) 
+ a^{ij}(x)D_i\varphi(y)D_j\varphi(y) 
+ 2a^{ij}(x)(D_i\psi_\delta(y+z_2) + p_i)D_j\varphi(y)\\
&+ \sum\limits_{i=1}^{n}b^i(x)D_i\varphi(y) - \delta\varphi(y) 
= \tilde{c}(y+z_2) - \tilde{c}(y+z_1).
\end{aligned}
\end{equation*} 
By comparison principle, one has
$$\|\delta\psi_\delta(\cdot+z_1) - \delta\psi_\delta(\cdot + z_2)\| \leq \|\tilde{c}(\cdot + z_1) - \tilde{c}(\cdot + z_2)\|$$
for all $z_1, z_2 \in \RR^n$.
Now for any $y \in \RR^n$, there exists $|z| < R$ such that
$$|\delta\psi_\delta(y) - \delta\psi_\delta(z)| \leq \|\tilde{c}(\cdot + y) - \tilde{c}(\cdot + z)\| \leq \rho(R;\tilde{c}).$$
This implies that
$$|\delta\psi_\delta(y) - \delta\psi_\delta(0)| \leq |\delta\psi_\delta(y) - \delta\psi_\delta(z)| + |\delta\psi_\delta(z) - \delta\psi_\delta(0)| \leq \rho(R;\tilde{c}) + \delta C(p)R.$$
Thus the conclusion follows from the definition of $\Theta_1(\delta;\tilde{c})$.
\end{proof}

Combine with \thref{characterization of effective Ham}, we have
\begin{equation}\label{correctorconverrate}
\|\delta\psi_\delta(\cdot;x,p)\| \leq C(p)\Theta_1(\delta;\tilde{c}).
\end{equation}

\begin{remark}
The preceding arguments imply that $\{\delta\psi_\delta\}_{\delta > 0}$ is uniformly almost periodic in $y$ uniformly in $x$ and locally uniformly in $p$. 
In fact, Bernstein's method \cite{barles2021local} yields the estimates
\begin{equation}\label{uniform a.p. of corrector}
\begin{aligned}
&\|D_y^2\psi_\delta(\cdot + z_1;x,p) - D_y^2\psi_\delta(\cdot + z_2;x,p)\| 
+ \|D_y\psi_\delta(\cdot + z_1;x,p) - D_y\psi_\delta(\cdot + z_2;x,p)\| \\
&+ \|\delta\psi_\delta(\cdot + z_1;x,p) - \delta\psi_\delta(\cdot + z_2;x,p)\| 
\leq C(p)\|\tilde{c}(\cdot + z_1) - \tilde{c}(\cdot + z_2)\|.
\end{aligned}
\end{equation} 
\end{remark}

\begin{lemma}\thlabel{correctorestimate}
$\psi_\delta(y;x,p)$ is twice differentiable in $y$ and $x$ with estimates
$$\|D_y\psi_\delta\| + \|D_y^2\psi_\delta\| + \|D_{xy}\psi_\delta\| \leq C(p)$$
and
$$\|\delta D_x\psi_\delta\| + \|\delta D_x^2\psi_\delta\| \leq C(p)$$
holding for some $C(p) > 0$ and all $\delta \in (0,1)$.
Furthermore, we have
\begin{equation}\label{Lipschitzlikeestimate}
\|\delta\psi_\delta(\cdot;x,p) - \delta\psi_\delta(\cdot;x,q)\| \leq C(p,q)\Theta_1(\delta;\tilde{c})|p-q|
\end{equation}
for all $x \in \RR$.
\end{lemma}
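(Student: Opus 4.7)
The plan is to treat the four claims in sequence---the $y$-derivative bounds, the weighted $x$-derivative bounds, the mixed $D_{xy}$ bound, and the Lipschitz-type estimate in $p$---noting in advance that I expect step three to be the main obstacle. First, $\|D_y\psi_\delta\|+\|D_y^2\psi_\delta\|\le C(p)$ follows from Bernstein's method applied to the semilinear equation \eqref{approxcorrector}: the quadratic gradient nonlinearity is controlled by uniform ellipticity irrespective of $\delta\in(0,1)$, and this is essentially \eqref{bound3}. Bootstrapping via interior Schauder (using $a^{ij},b^i,c,\tilde{c}\in C^{2,\gamma}$) also gives $\|D_y^3\psi_\delta\|\le C(p)$, which will be used below.

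For the $x$-derivative bounds, I would differentiate \eqref{approxcorrector} in $x_k$ (justified by a finite-difference argument in the spirit of the proof of \thref{smoothness of the effective Hamiltonian}). The function $\varphi_k:=D_{x_k}\psi_\delta$ then satisfies the linear coercive elliptic PDE
$$a^{ij}(x)D_{y_iy_j}\varphi_k+\bigl[2a^{ij}(x)(D_{y_i}\psi_\delta+p_i)+b^j(x)\bigr]D_{y_j}\varphi_k-\delta\varphi_k = g_k(y;x,p),$$
where $g_k$ collects the $x$-derivatives of $a^{ij},b^i,c$ (weighted by $D_y\psi_\delta$ and $D_y^2\psi_\delta$) together with $D_{x_k}\overline{H}(x,p)$; each of these is bounded by $C(p)$ thanks to the first step and \thref{smoothness of the effective Hamiltonian}. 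The maximum principle then yields $\|\delta D_{x_k}\psi_\delta\|_\infty\le C(p)$, and repeating with a second $x$-derivative gives $\|\delta D_x^2\psi_\delta\|_\infty\le C(p)$.

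The main obstacle is $\|D_{xy}\psi_\delta\|_\infty\le C(p)$, since $\|D_x\psi_\delta\|_\infty=O(1/\delta)$ prevents any naive Schauder estimate from producing a $\delta$-free gradient bound. My idea is to rerun the translation-oscillation argument of \thref{oscillationestimate} on $\varphi_k$ rather than on $\psi_\delta$: for each $z\in\RR^n$, the difference $\varphi_k(\cdot+z)-\varphi_k(\cdot)$ satisfies a linear coercive PDE whose RHS is a combination of translation differences of $g_k$ and of the drift coefficient $2a^{ij}(D_{y_i}\psi_\delta+p_i)$, both pointwise Lipschitz in $z$ with $\delta$-free constants (invoking the bounds $\|D_y\psi_\delta\|,\|D_y^2\psi_\delta\|,\|D_y^3\psi_\delta\|\le C(p)$). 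The maximum principle then gives $\|\delta\varphi_k(\cdot+z)-\delta\varphi_k(\cdot)\|_\infty\le C(p)|z|$, i.e.\ $\delta\varphi_k$ is $C(p)$-Lipschitz in $y$ uniformly in $\delta$, which is exactly $\|D_{xy}\psi_\delta\|_\infty\le C(p)$.

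For \eqref{Lipschitzlikeestimate}, let $\varphi:=\psi_\delta(\cdot;x,p)-\psi_\delta(\cdot;x,q)$. Subtracting the two equations and symmetrizing the quadratic term, $\varphi$ solves a linear coercive PDE of the form $a^{ij}D_{y_iy_j}\varphi+[a^{ij}(u_i+v_i)+b^j]D_{y_j}\varphi-\delta\varphi=G(y)$, where $u_i=D_{y_i}\psi_\delta(\cdot;x,p)+p_i$, $v_i=D_{y_i}\psi_\delta(\cdot;x,q)+q_i$, and $\|G\|_\infty\le C(p,q)|p-q|$; the crude maximum principle already gives $\|\delta\varphi\|_\infty\le C(p,q)|p-q|$. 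To sharpen this by the factor $\Theta_1(\delta;\tilde{c})$ I would mimic \thref{oscillationestimate} for $\varphi$: the $y$-translate $\varphi(\cdot+z_1)-\varphi(\cdot+z_2)$ solves a linear coercive equation whose RHS is driven by the translation differences of $u+v$, and these inherit the $\tilde{c}$-translation oscillation through the $y$-gradients of $\psi_\delta(\cdot;x,p)$ and $\psi_\delta(\cdot;x,q)$. Choosing $z_2\in B(0,R)$ so that $\|\tilde{c}(\cdot+z_1)-\tilde{c}(\cdot+z_2)\|\le\rho(R;\tilde{c})$ and optimizing in $R$ gives $\mathrm{osc}(\delta\varphi)\le C(p,q)\Theta_1(\delta;\tilde{c})|p-q|$; combining with \eqref{correctorconverrate} and \thref{characterization of effective Ham} applied separately to $\psi_\delta(\cdot;x,p)$ and $\psi_\delta(\cdot;x,q)$ (which forces $\delta\varphi$ to take values of both signs up to errors of the same order) upgrades the oscillation bound to the claimed $L^\infty$ estimate. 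A secondary difficulty here is that the direct subtraction cancels the $\tilde{c}$-source from the RHS of the equation for $\varphi$, so the almost-periodicity must re-enter indirectly via the drift coefficient $u+v$, which is precisely where the $D_y^2\psi_\delta$ and $D_y^3\psi_\delta$ bounds from the first step earn their keep.
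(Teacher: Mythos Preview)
Your treatment of the $y$-bounds and the $\delta$-weighted $x$-bounds is fine and matches the paper. The genuine gap is in step three: from ``$\delta\varphi_k$ is $C(p)$-Lipschitz in $y$'' you conclude $\|D_{xy}\psi_\delta\|_\infty\le C(p)$, but Lipschitz continuity of $\delta\varphi_k$ only gives $\|\delta D_y\varphi_k\|_\infty\le C(p)$, i.e.\ $\|\delta D_{xy}\psi_\delta\|_\infty\le C(p)$---you are off by a factor of $\delta$. The translation-oscillation argument you propose is a maximum-principle argument on a \emph{linear} equation for $\varphi_k$, and such equations do not yield $\delta$-free gradient bounds; the coercive quadratic term $a^{ij}D_i\varphi D_j\varphi$ that makes Bernstein work for $\psi_\delta$ itself has been differentiated away. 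The paper instead works with the $x$-difference $\varphi=\psi_\delta(\cdot;x+z,p)-\psi_\delta(\cdot;x,p)$ and writes for it a \emph{semilinear} equation retaining the quadratic term $a^{ij}D_i\varphi D_j\varphi$ (see the display preceding \eqref{Lipschitz in x}); it is this quadratic structure that drives the $\delta$-independent gradient bound $\|D_y\varphi\|\le C|z|$, from which $\|D_{xy}\psi_\delta\|\le C$ follows.

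The same issue recurs in your argument for \eqref{Lipschitzlikeestimate}. When you form $\chi=\varphi(\cdot+z_1)-\varphi(\cdot+z_2)$ from the linearized equation, the drift-difference term on the right is $[a^{ij}((u_1+v_1)-(u_2+v_2))_i]D_j\varphi$, so the maximum principle gives $\|\delta\chi\|\le C\|\tilde c(\cdot+z_1)-\tilde c(\cdot+z_2)\|\bigl(|p-q|+\|D\varphi\|\bigr)$. Without the bound $\|D\varphi\|\le C|p-q|$ this only yields $\mathrm{osc}(\delta\varphi)\le C\Theta_1(\delta;\tilde c)$, losing the crucial $|p-q|$ factor. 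The paper closes this by again keeping the quadratic term $a^{ij}D_i\varphi D_j\varphi$ in the equation for $\varphi$ (see \eqref{ODEestimate}) and invoking Bernstein together with \eqref{uniform a.p. of corrector} to first obtain $\|D\varphi\|\le C|p-q|$; only then does the translation step produce the $|p-q|$-linear oscillation bound. Your symmetrized linearization discards exactly the structure needed for that gradient estimate.
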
 

\begin{proof}
The regularity and derivatives estimates of $\psi_\delta$ follow from estimates \eqref{bound3} and the proof of \thref{smoothness of the effective Hamiltonian}.

Now we prove \eqref{Lipschitzlikeestimate}.
For any $p,q \in \RR^n$ and $\delta \in (0,1)$, 
the comparison principle implies there exists $C = C(p,q) > 0$ such that
$$\|\delta \psi_\delta(\cdot;x,p) - \delta \psi_\delta(\cdot;x,q)\| \leq C(p,q)|p-q|$$
for all $x \in \RR^n$. 
Let $\varphi(y) = \psi_\delta(y;x,p) - \psi_\delta(y;x,q) + (\overline{H}(x,p)- \overline{H}(x,q))/\delta$ and consider the following equation
$$\delta\varphi = \tilde{H}(y,x,D\psi_\delta(\cdot;\cdot,p)+p,D^2\psi_\delta(\cdot;\cdot,p)) - \tilde{H}(y,x,D\psi_\delta(\cdot;\cdot,q)+q,D^2\psi_\delta(\cdot;\cdot,q)).$$
Writing this equation explicitly yields
\begin{equation}\label{ODEestimate}
\begin{aligned}
&\quad -\sum\limits_{i,j=1}^{n}a^{ij}(x)D_{ij}\varphi 
- \sum\limits_{i,j=1}^{n}a^{ij}(x)D_i\varphi D_j\varphi 
- \sum\limits_{i=1}^{n}N^i(x,y)D_i\varphi + \delta \varphi \\
&= \sum\limits_{i,j=1}^{n}a^{ij}(x)(p_i-q_i)(p_j-q_j) 
+ \sum\limits_{i=1}^{n}b^i(x)(p_i-q_i),
\end{aligned}
\end{equation}
where 
$$N^i(x,y) 
= 2\sum\limits_{i,j=1}^{n}a^{ij}(x)(D_j\psi_\delta(y;x,q)+p_j) 
+ \sum\limits_{i=1}^{n}b^i(x).$$
By comparison principle, we have
$$\|\delta \varphi\| \leq C(p,q)|p-q|.$$
Letting $\delta \to 0$, we obtain
$$|\overline{H}(x,p) - \overline{H}(x,q)| \leq C(p,q)|p-q|.$$
Then by Bernstein's method \cite{barles2021local} and \eqref{uniform a.p. of corrector} we have
$$\|D\varphi\| + \|\delta\varphi\| \leq C(p,q)|p-q|,$$
and
$$\|\delta\varphi(\cdot + z_1) - \delta\varphi(\cdot + z_2)\| \leq C(p,q)\|\tilde{c}(\cdot + z_1) - \tilde{c}(\cdot + z_2)\||p-q|.$$
Define $\phi(y) = \psi_\delta(y;x,p) - \psi_\delta(y;x,q)$, then for any $y,z \in \RR^n$, one can choose $w \in B(z,R)$ such that
\begin{equation*}
\begin{aligned}
|\delta \phi(y) - \delta \phi(z)| 
& \leq &&|\delta\phi(y) - \delta\phi(w)| + |\delta\phi(w) - \delta\phi(z)| \\
& = &&|\delta \varphi(y) - \delta \varphi(w)| + |\delta\varphi(w) - \delta\varphi(z)| \\
& \leq && C(p,q)|p-q|(\rho(R;\tilde{c}) + \delta R).
\end{aligned}
\end{equation*}
From \thref{characterization of effective Ham},
we may choose $z$ such that $|\delta \phi(z)|$ is arbitrarily small.
Then the estimate \eqref{Lipschitzlikeestimate} follows.
\end{proof}

If $\tilde{c}$ satisfies \eqref{a.p.char},
then choosing $R = (\frac{\delta}{C})^{-\frac{1}{\tau+1}}$ yields
$$\Theta_1(\delta;\tilde{c}) \leq \rho(R;\tilde{c}) + \delta R \leq C\delta^{\frac{\tau}{\tau+1}}.$$
Consequently,
estimates \eqref{correctorconverrate} and \eqref{Lipschitzlikeestimate} simplify to
\begin{equation}
\|\delta\psi_\delta(\cdot;x,p)\| \leq C(p)\delta^{\frac{\tau}{\tau+1}}
\end{equation}
and
\begin{equation}\label{Lipschitzestimate}
\delta^{\frac{1}{\tau+1}}\|\psi_\delta(\cdot;x,p) - \psi_\delta(\cdot;x,q)\| \leq C(p,q)|p-q|
\end{equation}
respectively. 
Following the approach of \cite{qian2024optimal} we have:

\begin{theorem}\thlabel{convergence rate corrector}
Assume \eqref{a.p.char} holds for some $C > 0$ and $\tau > 0$, then 
$$\|\delta v_p^L - \delta v_p\| \leq C(p)L^{\frac{-\tau}{2\tau+1}}$$
for some $C(p) \in \RR$.
\end{theorem}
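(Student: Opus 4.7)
The strategy is the perturbed test function method combined with regularization of the limit profile, culminating in an optimization over auxiliary scales, in the spirit of \cite{qian2024optimal}. Write $\delta > 0$ for the approximation parameter, so that $v_p^L$ solves $H(x, Lx, Dv_p^L - p, \tfrac{1}{L}D^2 v_p^L) = \delta v_p^L$ and $v_p$ satisfies $\overline{H}(x, Dv_p - p) = \delta v_p$ in the viscosity sense. The plan is to build from $v_p$ two functions $\Phi_L^\pm$ that, up to controlled residuals, are a supersolution and a subsolution, respectively, of the equation for $v_p^L$, and then invoke the comparison principle. The approximate corrector $\psi_\mu$ from \eqref{approxcorrector} at an auxiliary scale $\mu \in (0,1)$ will play the role of the fast-variable oscillation, and the scales $\mu$ and $\eta$ below will be tuned at the end.

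First I would regularize $v_p$. Since $v_p$ is only Lipschitz, I would use sup- and inf-convolution at a scale $\eta > 0$ to produce $v_p^{\eta,\pm}$ satisfying $\|v_p^{\eta,\pm} - v_p\|_\infty \leq C\eta$, a uniform Lipschitz bound, and the distributional bound $\|D^2 v_p^{\eta,\pm}\|_\infty \leq C/\eta$. These are sub- and supersolutions of the effective equation up to an $O(\eta)$ perturbation by standard viscosity arguments. Then I would define the barriers
\begin{equation*}
\Phi_L^\pm(x) = v_p^{\eta,\pm}(x) + \frac{1}{L}\,\psi_\mu\!\bigl(Lx;\, x,\, Dv_p^{\eta,\pm}(x) - p\bigr),
\end{equation*}
plug them into $H(x, Lx, D\Phi - p, \tfrac{1}{L}D^2\Phi) - \delta\Phi$, and use the cell equation \eqref{approxcorrector} to cancel the principal part. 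The residual then organizes into: (i) the corrector term $\mu\psi_\mu$ of size $\mu^{\tau/(\tau+1)}$ by \eqref{correctorconverrate}; (ii) regularization error $O(\eta)$ coming from the effective equation and from $\|\delta(v_p^{\eta,\pm} - v_p)\|$; (iii) a chain-rule contribution of size $O\bigl((L\eta)^{-1}\mu^{-1/(\tau+1)}\bigr)$ arising from $\tfrac{1}{L}\,\partial_q\psi_\mu \cdot D^2 v_p^{\eta,\pm}$, controlled via \eqref{Lipschitzestimate} and the semiconcavity bound on $v_p^{\eta,\pm}$; (iv) genuinely smaller terms such as $\tfrac{1}{L}$ cross-derivative pieces from \thref{correctorestimate}.

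Applying the viscosity comparison principle to the quasilinear equation for $v_p^L$ then yields
\begin{equation*}
\delta \|v_p^L - v_p\|_\infty \leq C\Bigl(\mu^{\tau/(\tau+1)} + \eta + \tfrac{1}{L\eta\,\mu^{1/(\tau+1)}}\Bigr) + o(L^{-\tau/(2\tau+1)}).
\end{equation*}
Optimizing by setting $\eta = \mu^{\tau/(\tau+1)}$ and then $\mu = L^{-(\tau+1)/(2\tau+1)}$ makes the three retained terms simultaneously of order $L^{-\tau/(2\tau+1)}$, which is the stated rate; a sanity check shows the periodic case $\tau \to \infty$ recovers $L^{-1/2}$.

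The main obstacle I expect is the chain-rule term (iii). The dependence of $\psi_\mu$ on the frozen momentum $q$ is only Lipschitz with constant $\mu^{-1/(\tau+1)}$, which blows up as $\mu \to 0$, and this interacts delicately with the distributional second derivative of $v_p^{\eta,\pm}$ whose $L^\infty$ norm blows up as $\eta \to 0$. Justifying that this product really contributes only at the claimed size requires either a careful freezing argument (evaluating $\psi_\mu$ at a frozen $q$ in a small neighborhood and estimating oscillation via \eqref{uniform a.p. of corrector}) or a doubling-variable argument that decouples the two regularization scales. The exponent $\tau/(2\tau+1)$ is precisely the one produced by matching these two competing blow-ups against the corrector accuracy $\mu^{\tau/(\tau+1)}$; once this balance is handled rigorously in the viscosity setting, the remaining estimates are routine applications of the Bernstein-type bounds already collected in \thref{correctorestimate}.
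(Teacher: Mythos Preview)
Your scale balancing is exactly the paper's: it takes the corrector scale $\delta=\mu=L^{-(\tau+1)/(2\tau+1)}$ and quadratic penalties at strength $L^{\beta}$ with $\beta=\tau/(2\tau+1)$, which is your $1/\eta$, so the three error contributions you list and the resulting exponent $L^{-\tau/(2\tau+1)}$ match. The difficulty you flag at the end is, however, a genuine gap in your implementation, not a technicality. Sup- and inf-convolution of a Lipschitz function yield only \emph{one-sided} second-derivative bounds, not $\|D^2v_p^{\eta,\pm}\|_\infty\le C/\eta$; and $Dv_p^{\eta,\pm}$ exists only almost everywhere, so the composite $\Phi_L^\pm(x)=v_p^{\eta,\pm}(x)+\tfrac{1}{L}\psi_\mu(Lx;x,Dv_p^{\eta,\pm}(x)-p)$ is not regular enough to run the chain-rule expansion you describe. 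The term (iii) moreover requires pointwise control of $\partial_q\psi_\mu$ and, through the full Hessian of $\Phi_L^\pm$, also of $\partial_{yq}\psi_\mu$ and $\partial_q^2\psi_\mu$, none of which are supplied by \thref{correctorestimate} or \eqref{Lipschitzestimate} (the latter bounds $\psi_\mu$ in $q$, not its derivatives).

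The paper sidesteps all of this by taking from the outset the doubling-variable route you mention as a fallback. It works with the \emph{tripled} auxiliary function
\[
\Phi(x,y,z)=\epsilon v^L(x)-\epsilon v(y)-\tfrac{1}{L}\psi_\delta\bigl(Lx;\,x,\,2L^{\beta}(z-y)\bigr)-L^{\beta}|x-y|^2-L^{\beta}|x-z|^2-\eta\sqrt{1+|x|^2},
\]
where the penalty $L^{\beta}|x-y|^2$ replaces your regularization of $v_p$, and the third variable $z$ with its own penalty $L^{\beta}|x-z|^2$ freezes the momentum argument of the corrector. This decoupling turns your chain-rule term (iii) into elementary estimates on $|\hat x-\hat y|$ and $|\hat x-\hat z|$ at the maximum point, obtained solely from \eqref{Lipschitzestimate} and the Lipschitz bound on $v$; no second derivatives of $v_p$ and no $q$-derivatives of $\psi_\mu$ enter. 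The supersolution inequality for $v$ at $\hat y$ is then extracted via a further doubling $(y,\xi)\mapsto\Psi(y,\xi)$, again to avoid differentiating $v$. In short, you have the right mechanism and the right scales, but the rigorous argument is the tripled-variable one, not the regularized-barrier one.
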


\begin{proof}
The idea is adapted from \cite{qian2024optimal},
so we only sketch the proof.
Note that by \eqref{bounds}, we may restrict attention to bounded values of $p$. 
Thus, the results are unaffected if we redefine $\tilde{H}(x,y,p,X) = \sum_{i,j=1}^{n}a^{ij}(y)X_{ij} + |p|^2$ for sufficiently large $|p|$. 
Under this modification, we have $\psi_\delta(\cdot;x,p) \equiv 0$ for all $x \in \RR^n$ and $\delta > 0$ whenever $|p|$ is large enough. 
This allows us to remove the dependence of the constant in (large) $p$ and $q$ appeared in the preceding estimates.
Therefore, when fixing $p$, we could assume without loss of generality that
$$\begin{array}{l}
\mathop{\mathrm{osc}}\limits_{\RR^n}\ \delta\psi_\delta(\cdot;x,q) \leq \tilde{\Delta}_{C(p)}(\delta), \\
\|D_y\psi_\delta(\cdot;x,q)\| + \|D_y^2\psi_\delta(\cdot;x,q)\| + \|D_{xy}\psi_\delta(\cdot;x,q)\| \leq C(p), \\
\|\delta D_x\psi_\delta(\cdot;x,q)\| + \|\delta D_x^2\psi_\delta(\cdot;x,q)\| \leq C(p), \\
\|\delta\psi_\delta(\cdot;x,q) - \delta\psi_\delta(\cdot;x,\tilde{q})\| \leq C(p)\tilde{\Delta}_1(\delta)|q-\tilde{q}|
\end{array}$$
for all $\delta > 0$ and $x,q,\tilde{q} \in \RR^n$.

We will adopt some notations from the proof of \thref{maintheoreminfinity}.
As the case $p=0$ involves no essential difference, 
we present the proof only for this case and henceforth omit the subscript from $v_0^L$ and $v_0$ in what follows.

Define the auxiliary function
\begin{equation*}
\begin{aligned}
\Phi (x,y,z) & := \epsilon v^L(x) - \epsilon v(y) - \psi_\delta(Lx;x,2L^{\beta}(z-y))/L \\
& - L^{\beta}|x-y|^2 - L^{\beta}|x-z|^2 - \eta \sqrt{1+|x|^2},
\end{aligned}
\end{equation*}
where $\eta \in (0,1)$,$\beta = \tau/(2\tau+1)$ and $\delta = L^{-\theta}$ with $\theta = (\tau+1)/(2\tau+1)$.
From the boundedness of $v^L, v$ and $\psi_\delta$, we could assume that $\Phi$ attains its maximum in some point $(\hat{x},\hat{y},\hat{z})$. 
Note that we could assume that this maximum is strict by adding to $\Phi$ a smooth function vanishing together with its first and second derivatives at $(\hat{x},\hat{y},\hat{z})$.

From $\Phi(\hat{x},\hat{y},\hat{z}) \geq \Phi(0,0,0)$, we have 
\begin{equation*}
\begin{aligned}
\eta\sqrt{1+|\hat{x}|^2} 
& \leq \epsilon(v^L(\hat{x})-v^L(0)) 
- \epsilon(v(\hat{y}) - v(0)) 
- L^{\beta}|\hat{x}-\hat{y}|^2 
- L^{\beta}|\hat{x}-\hat{z}|^2 
+ \eta \\
&\quad - (\psi_\delta(L\hat{x};\hat{x},2L^{\beta}(\hat{z} - \hat{y}))
- \psi_\delta(L\hat{x};\hat{x},0))/L 
- (\psi_\delta(L\hat{x};\hat{x},0) 
- \psi_\delta(0;0,0))/L \\
& \leq C 
- L^{\beta}(|\hat{x}-\hat{y}|^2 
+ |\hat{x} - \hat{z}|^2) 
+ CL^{\beta+\frac{\theta}{\tau+1}-1}(|\hat{z}-\hat{x}| 
+ |\hat{x}-\hat{y}|) 
+ CL^{\theta-1} \\
& \leq C 
+ CL^{\beta + \frac{2\theta}{\tau+1} - 2} 
+ C L^{\theta-1} 
\leq C
\end{aligned}
\end{equation*}
for $L \geq 1$. Here we have used Young's inequality in the last second inequality.

Inequality $\Phi(\hat{x},\hat{y},\hat{z}) \geq \Phi(\hat{x},\hat{y},\hat{x})$ gives 
\begin{equation*}
\begin{aligned}
L^{\beta}|\hat{x}-\hat{z}|^2 
&\leq (\psi_\delta(L\hat{x};\hat{x},2L^{\beta}(\hat{z}-\hat{y}))-\psi_\delta(L\hat{x};\hat{x},2L^{\beta}(\hat{x}-\hat{y})))/L \\
&\leq CL^{\beta}|\hat{x}-\hat{z}|/L^{1-\frac{\theta}{\tau+1}}.
\end{aligned}
\end{equation*}
Therefore we obtain that
$$|\hat{x}-\hat{z}| \leq CL^{\frac{\theta}{\tau+1}-1} = CL^{-2\beta}.$$

Moreover, from inequality $\Phi(\hat{x},\hat{y},\hat{z}) \geq \Phi(\hat{x},\hat{x},\hat{z})$ we obtain that
$$|\hat{x}-\hat{y}| \leq CL^{-\beta}$$
for $L \geq 1$ by similar arguments.
The above inequalities tell that
$$|\hat{y}-\hat{z}| \leq CL^{-\beta}.$$
which implies that $L^{\beta}|\hat{y}-\hat{z}|$ is uniformly bounded in $L$.

We claim that
\begin{equation}\label{upperbound}
\epsilon v^L(\hat{x}) \leq \overline{H}(\hat{x},2L^{\beta}(\hat{z}-\hat{y})) + CL^{-\beta} + C\eta,
\end{equation}
where $C$ only depends on $\tilde{H}$.
Indeed, observe that by the definition of $\Phi$, the function
$$x \mapsto \Phi(x,\hat{y},\hat{z})$$
has a maximum at $\hat{x}$. 
Using $v^L(x) - \epsilon^{-1}\Phi(x,\hat{y},\hat{z})$ as a subsolution at $\hat{x}$ to 
$$\epsilon v^L - \tilde{H}(Lx,x,Dv^L,D^2v^L/L) = 0,$$
then we have
\begin{equation*}
\begin{aligned}
\epsilon v^L(\hat{x}) 
&\leq \tilde{H}(L\hat{x},\hat{x},D_y\psi_\delta 
+ D_x\psi_\delta/L 
+ 2L^{\beta}(\hat{x}-\hat{y}) 
+ 2L^{\beta}(\hat{x}-\hat{z}) 
+ \eta \hat{x}/\sqrt{1+|\hat{x}|^2}, \\
&\quad D_y^2\psi_\delta + 2D_{xy}\psi_\delta/L 
+ D_x^2\psi_\delta/L^2 
+ 4L^{\beta-1} 
+ \frac{\eta}{(1+|x|^2)^{\frac{3}{2}}L}) \\
&\leq \delta \psi_\delta(L\hat{x};\hat{x},2L^{\beta}(\hat{z}-\hat{y})) 
+ \overline{H}(\hat{x},2L^{\beta}(\hat{z}-\hat{y})) 
+ 4CL^{\beta}(|\hat{x}-\hat{z}|) 
+ C\eta \\
&\quad + C|D_x\psi_\delta|/L 
+ C|D_x^2\psi_\delta|/L^2 
+ C|D_{xy}\psi_\delta|/L 
+ CL^{\beta-1} 
+ C\eta/L\\
&\leq \overline{H}(\hat{x},2L^{\beta}(\hat{z}-\hat{y})) 
+ \delta^{\frac{\tau}{\tau+1}} 
+ CL^{-\beta} 
+ C\eta 
+ CL^{\theta-1} 
+ CL^{\beta-2} 
+ CL^{\beta-1} \\
&\leq \overline{H}(\hat{x},2L^{\beta}(\hat{z}-\hat{y})) 
+ CL^{-\beta} 
+ C\eta.
\end{aligned}
\end{equation*}
Next we claim that
\begin{equation}\label{lowerbound}
\epsilon v(\hat{y}) \geq \overline{H}(2L^{\beta}(\hat{x}-\hat{y}),\hat{y}) - CL^{-\beta}.
\end{equation}  
Introduce
$$\Psi(y,\xi) := \epsilon v(y) + \psi_\delta(L\hat{x};\hat{x},2L^{\beta}(\hat{z}-\xi))/L + L^{\beta}|\hat{x}-y|^2 + \alpha|y-\xi|^2 + \eta|y-\hat{y}|^2.$$
Then there exists $y_\alpha,\xi_{\alpha} \in \RR^n$ such that the function $\Psi$ has a global minimuum at the point $(y_\alpha,\xi_{\alpha})$.

Using inequalities $\Psi(y_\alpha,\xi_{\alpha}) \leq \Psi(y_\alpha,y_\alpha), \Psi(y_\alpha,\xi_{\alpha}) \leq \Psi(\hat{x},\hat{x})$ we obtain that
$$\alpha|y_\alpha-\xi_{\alpha}| \leq CL^{-\beta} \quad \hbox{ and } \quad |\hat{x}-y_\alpha| \leq CL^{-\beta}/\sqrt{1+1/\alpha}$$
respectively. 
Moreover, 
from the above estimates one could find that as $\alpha \to +\infty$, 
$(y_\alpha,\xi_{\alpha})$ converges, 
up to subsequence, 
to $(\beta,\beta)$ for some $\beta \in \RR^n$ as $\alpha \to +\infty$. 
Note that $\Psi(\beta,\beta) = \lim\limits_{\alpha \to +\infty}\Psi(y_\alpha,\xi_{\alpha}) \leq \Psi(\hat{y},\hat{y})$, 
and $(\hat{y},\hat{y})$ is a strict global minimum of the map $y \mapsto \Psi(y,y)$, 
we conclude that $(y_\alpha,\xi_{\alpha}) \to (\hat{y},\hat{y})$ as $\alpha \to +\infty$. 
Define $\phi: \RR^n \to \RR$ by $\phi(y) = \epsilon v(y) - \Psi(y,\xi_{\alpha})$, 
we see that $\epsilon v-\phi$ has a global minimum at the point $y = y_\alpha$ and therefore we compute that
$$D\phi(y_{\alpha}) = 2L^{\beta}(\hat{x}-y_\alpha) + 2\alpha(\xi_{\alpha}-y_\alpha) + 2\eta(\hat{y}-y_\alpha).$$
By the viscosity supersolution test, we have
\begin{equation*}
\begin{aligned}
\epsilon v(y_\alpha) 
&\geq \overline{H}(y_\alpha,2L^{\beta}(\hat{x}-y_\alpha) 
+ 2\alpha(\xi_{\alpha}-y_\alpha)+2\eta(\hat{y}-y_\alpha)) \\
&\geq \overline{H}(\hat{y},2L^{\beta}(\hat{x}-\hat{y})) 
- C|\hat{y} - y_\alpha| 
- CL^{\beta}|y_\alpha-\hat{y}| 
- C\alpha|y_\alpha-\xi_{\alpha}| \\
&\geq \overline{H}(\hat{y},2L^{\beta}(\hat{x}-\hat{y})) 
-C L^{-\beta} 
- C(1+L^\beta)|y_\alpha - \hat{y}|.
\end{aligned}
\end{equation*}
Then \eqref{lowerbound} follows from letting $\alpha \to +\infty$.

Now combine \eqref{upperbound} and \eqref{lowerbound} we obtain that
\begin{equation*}
\begin{aligned}
\epsilon v^L(\hat{x}) - \epsilon v(\hat{y}) 
&\leq \overline{H}(\hat{x},2L^{\beta}(\hat{z}-\hat{y})) 
- \overline{H}(\hat{y},2L^{\beta}(\hat{x}-\hat{y})) 
+ CL^{-\beta} 
+ C\eta \\
&\leq CL^{\beta}|\hat{z}-\hat{x}| 
+ C|\hat{x}-\hat{y}| 
+ CL^{-\beta} 
+ C\eta \\
&\leq CL^{-\beta} 
+ C\eta.
\end{aligned}
\end{equation*}
Using the fact that $\Phi(x,x,x) \leq \Phi(\hat{x},\hat{y},\hat{z})$ we obtain
\begin{equation*}
\begin{aligned}
\epsilon v^L(x) - \epsilon v(x) 
&\leq CL^{-\beta} 
- \psi_\delta(L\hat{x};\hat{x},2L^{\beta}(\hat{z}-\hat{y}))/L 
+ \psi_\delta(Lx;x,0)/L 
+ C\eta 
+ \eta\sqrt{1+|x|^2} \\
&\leq CL^{-\beta} 
+ C\eta 
+ \eta\sqrt{1+|x|^2}.
\end{aligned}
\end{equation*}
Let $\eta \to 0$ then we obtain one side of the result. The other side follows from an analogous way and we omit it here.
\end{proof}

\begin{corollary}\thlabel{convergence rate infinity}
$|\lambda(L,p) - \Lambda(p)| \leq C(p)L^{\frac{-\tau}{2\tau+1}}$ for some $C(p) \in \RR$.
In the periodic case,
the rate improves to $L^{-\frac{1}{2}}$.
\end{corollary}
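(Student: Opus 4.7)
The plan is to deduce this corollary from \thref{convergence rate corrector} by passing to the limit $\epsilon \to 0^+$ in the estimate provided there. The key observation is that the constant $C(p)$ in \thref{convergence rate corrector} is independent of $\epsilon$, so the bound survives the limit.

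First, I would recall the setup. For each fixed $L > 0$ and $\epsilon > 0$, the function $v_p^L$ is the unique bounded Lipschitz continuous viscosity solution of
$$H(x,Lx,Dv_p^L - p,D^2v_p^L/L) = \epsilon v_p^L,$$
and by the characterization of the effective Hamiltonian established in the proof of \thref{maintheoreminfinity} (via the relatively dense set argument analogous to that used in \thref{maintheoremzero}), $\epsilon v_p^L$ converges uniformly on $\RR^n$ to the constant $\lambda(L,p)$ as $\epsilon \to 0^+$. Similarly, $v_p$ solves \eqref{HJ}, and the proposition immediately following \thref{smoothness of the effective Hamiltonian} ensures that $\mathop{\text{osc}}\limits_{\RR^n}\epsilon v_p \to 0$, so $\epsilon v_p$ converges uniformly to $\Lambda(p)$ as $\epsilon \to 0^+$.

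Second, I would apply \thref{convergence rate corrector}, which provides
$$\|\epsilon v_p^L - \epsilon v_p\|_{L^{\infty}(\RR^n)} \leq C(p) L^{-\tau/(2\tau+1)},$$
where the constant $C(p)$ depends only on $H$ and $p$. It is crucial to verify here that $C(p)$ does not depend on $\epsilon$: inspecting the proof of \thref{convergence rate corrector}, the constant arises from bounds on the approximate correctors $\psi_\delta$ and on $\|Dv_p^L\|$, $\|D^2v_p^L/L\|$, all of which are uniform in $\epsilon$ by \eqref{bounds} and \thref{correctorestimate}.

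Finally, evaluating the estimate at any single point $x_0 \in \RR^n$ and sending $\epsilon \to 0^+$, the left-hand side converges to $|\lambda(L,p) - \Lambda(p)|$ by the uniform convergences recalled above, while the right-hand side is independent of $\epsilon$. This yields the desired bound. I expect no genuine obstacle at this stage — all the hard analysis has been absorbed into \thref{convergence rate corrector}, and the remaining task is simply to verify the $\epsilon$-independence of the constant and pass to the limit.
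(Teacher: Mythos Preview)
Your proposal is correct and follows essentially the same route as the paper: the paper writes the triangle inequality at the origin,
\[
|\lambda(L,p) - \Lambda(p)| \leq |\lambda(L,p) - \epsilon v_p^L(0)| + |\epsilon v_p^L(0) - \epsilon v_p(0)| + |\epsilon v_p(0) - \Lambda(p)|,
\]
bounds the middle term by \thref{convergence rate corrector}, and lets $\epsilon \to 0$. Your explicit check that the constant $C(p)$ in \thref{convergence rate corrector} is independent of $\epsilon$ is a worthwhile clarification that the paper leaves implicit.
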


\begin{proof}
We have
\begin{equation*}
\begin{aligned}
|\lambda(L,p) - \Lambda(p)| 
&\leq &&|\lambda(L,p) - \epsilon v_p^L(0)| 
+ |\epsilon v_p^L(0) - \epsilon v_p(0)| 
+ |\epsilon v_p(0) - \Lambda(p)| \\
&\leq &&|\lambda(L,p) - \epsilon v_p^L(0)| 
+ |\epsilon v_p(0) - \Lambda(p)| 
+ C(p) L^{\frac{-\tau}{2\tau+1}}.
\end{aligned}
\end{equation*}
The conclusion follows by letting $\epsilon \to 0$.

In the periodic case
one readily verify that
$$\rho(R;\tilde{c}) = 0$$
for sufficiently large $R$. 
Hence there exists $C > 0$ such that
$$\rho(R;\tilde{c}) + \delta R \leq C \delta.$$
The same arguments as in the proof of \thref{convergence rate corrector} then yield
$$|\lambda(L,p) - \Lambda(p)| \leq C(p)L^{-\frac{1}{2}}.$$
\end{proof}

The proofs of \thref{infity limit} and \thref{infinityrate} follow from applying \thref{maintheoreminfinity} and \thref{convergence rate infinity} respectively
to the arguments established for \thref{zero limit} and \thref{zerorate}.

\section{Influence of the coefficients: The proof of the rest results}
This section presents the remaining proofs.

\begin{proof}
[\textbf{Proof of \thref{zeroexpression}}]
It follows from by taking $\psi \equiv 1$ as a test function in \eqref{zeroeigenvalues}.
\end{proof}

\begin{proof}
[\textbf{Proof of \thref{iota expression}}]
By the definition of $\iota$, consider the problem
$$a(x)D^2\psi + f(x) = \iota(f).$$
Multiplying both sides by $a^{-1}(x)$ and integrating over $[-R,R]$ gives
$$D\psi(R) - D\psi(-R) + \int_{-R}^{R}a^{-1}(x)f(x)dx = \iota(f)\int_{-R}^{R}a^{-1}(x)dx.$$
Dividing this equality by $2R$ and taking the limit as $R \to +\infty$ yields the result.
\end{proof}

\begin{lemma}
If $\tilde{c} = 0$, then 
\begin{equation}
\overline{H}(x,p) = \sum\limits_{i,j=1}^{n}a^{ij}(x)p_ip_j 
+ \sum\limits_{i=1}^{n}b^i(x)p_i 
+ c(x),
\end{equation}
where $\overline{H}$ is as defined by \eqref{transHamilton}.
\end{lemma}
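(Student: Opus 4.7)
The plan is very short: when $\tilde{c}=0$, the Hamiltonian
$$H(x,y,q,X)=a^{ij}(x)X_{ij}+a^{ij}(x)q_iq_j+b^i(x)q_i+c(x)$$
does not depend on the fast variable $y$ at all. Consequently, treating $x$ as a frozen parameter, the cell problem
$$H\bigl(x,y,D_y\psi(y)+p,D_y^2\psi(y)\bigr)=\overline{H}(x,p)$$
admits the trivial exact corrector $\psi\equiv 0$, which is certainly bounded and Lipschitz in $y$. Substituting this choice gives
$$H(x,y,p,0)=a^{ij}(x)p_ip_j+b^i(x)p_i+c(x),$$
a quantity independent of $y$.

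To conclude rigorously, I would invoke the characterization of the effective Hamiltonian in \thref{characterization of effective Ham} (applied in the $y$-variable with $x$ fixed; note that this characterization is precisely the one used in \thref{Ishii} and carries over verbatim to the Hamiltonian appearing in \eqref{transHamilton}). Testing the inequalities in that proposition against $\psi\equiv 0$ yields
$$\inf_{y\in\RR^n}H(x,y,p,0)\;\leq\;\overline{H}(x,p)\;\leq\;\sup_{y\in\RR^n}H(x,y,p,0),$$
and since both bounds coincide with $a^{ij}(x)p_ip_j+b^i(x)p_i+c(x)$, the identity follows.

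There is essentially no obstacle here: the whole content is that the $y$-dependence of $H$ is carried solely by $\tilde{c}(y)$, so switching it off collapses the homogenization problem to a pointwise identity. The only thing worth checking carefully is that $\psi\equiv 0$ is indeed an admissible test function in the class used to define $\overline{H}$ via \eqref{effectiveHamiltonian}, which is immediate from its boundedness and Lipschitz continuity.
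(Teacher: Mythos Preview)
Your proof is correct and follows essentially the same approach as the paper: both observe that constant functions are exact correctors of \eqref{transHamilton} when $\tilde{c}=0$, from which the claimed value of $\overline{H}(x,p)$ follows immediately. The paper simply states this and appeals to the definition of the effective Hamiltonian, whereas you additionally invoke \thref{characterization of effective Ham} to squeeze the value; this extra step is harmless but not needed, since an exact bounded Lipschitz corrector already pins down $\overline{H}(x,p)$ by uniqueness.
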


\begin{proof}
Since $\psi \equiv 1$ is a solution of \eqref{transHamilton},
the conclusion follows directly from the definition of the effective Hamiltonian.
\end{proof}

Therefore $\Lambda(p)$ is the effective Hamiltonian for the problem
\begin{equation}\label{infiPrinEigen}
\sum\limits_{i,j=1}^{n}a^{ij}(D_iu-p_i)(D_ju-p_j) 
+ \sum\limits_{i=1}^{n}b^i(D_iu-p_i) 
+ c = \lambda
\end{equation}
when $\tilde{c} = 0$.
Moreover,
in one dimension, 
$\Lambda(p)$ could be expressed in terms of integral quantities of the coefficients.

\begin{proof}
[\textbf{Proof of \thref{infinity in 1-dim}}]
For any $\delta > 0$, let $u$ be a $\delta-$approximate corrector of \eqref{infiPrinEigen}.
Then
$$\Lambda(p) \geq \overline{H}(x,Du - p) - \delta = a(x)(Du - p + \frac{b}{2a}(x))^2 + c(x) - \frac{b^2}{4a}(x) - \delta.$$
Since the above inequality holds for all $x \in \RR$, 
it follows that $\Lambda(p) \geq M - \delta$,
where
$$M := \sup\limits_{x \in \RR}\{c(x) - \frac{b^2}{4a}(x)\}.$$ 
Letting $\delta \to 0$ gives $\Lambda(p) \geq M$ for all $p \in \RR$.

For all $\lambda \geq j_+(M)$ with $j_+$ defined as in \eqref{infirelation}, 
let 
\begin{equation}
u_+(x) = px - \int_{0}^{x}\frac{b(y)}{2a(y)}dy + \int_{0}^{x}\sqrt{\frac{\lambda - c(y)}{a(y)} + \frac{b^2(y)}{4a^2(y)}}dy,
\end{equation}
where $p = j_+(\lambda)$. 
Then $u_+$ is sublinear at infinity and solves
$$\overline{H}(x,Du_+ - p) = (j_+)^{-1}(p).$$
By the uniqueness of the effective Hamiltonian, we have $\Lambda(p) = (j_+)^{-1}(p)$. 
A symmetric argument applies for $p \leq j_-(M)$.
Finally, note that $\Lambda(j_{\pm}(M)) = M$.
The convexity of $\Lambda(p)$,
inherited from that of $\overline{H}$ in $p$ \cite{tran2021hamilton},
together with the bound $\Lambda(p) \geq M$ for all $p \in \RR$, 
implies that $\Lambda(p) = M$ for all $p \in [j_-(M), j_+(M)]$.  
\end{proof}

\begin{proof}
[\textbf{Proof of \thref{smaller eigenvalues}}]
Since $b$ is almost periodic with $<b> = 0$,
one has $M := \sup_{x \in \RR^n}\{c - \frac{b^2}{4a}(x)\} = c$.
Consequently $|j_{\pm}(M)| = \frac{<b^2>}{2a} \neq 0$,
where $j_{\pm}$ is defined in \eqref{infirelation}.
For any $|p| \leq |j_{\pm}(M)|$,
\thref{infinity in 1-dim} implies that $\Lambda(p) = c \leq ap^2 + c$.
For $|p| > |j_{\pm}(M)|$,
we have
\begin{equation*}
\sqrt{a}|p| 
= \Xint{-}\sqrt{\Lambda(p) - c + \frac{b^2}{4a}(x)}dx 
> \sqrt{\Lambda(p) - c},
\end{equation*}
which yields $\Lambda(p) < ap^2 + c$.
\end{proof}

\begin{proof}
[\textbf{Proof of \thref{slow-down speeds}}]
To prove $\omega(1;L) < 2\sqrt{ac}$ for large $L$,
we only need to prove that
\begin{equation*}
\inf_{p > 0} \frac{\Lambda(p)}{p} < 2\sqrt{ac}.
\end{equation*}
In fact,
we have
\begin{equation*}
\inf_{p > 0} \frac{\Lambda(p)}{p} 
\leq \frac{\Lambda(\sqrt{\frac{c}{a}})}{\sqrt{\frac{c}{a}}} 
< \frac{a (\sqrt{\frac{c}{a}})^2 + c}{\sqrt{\frac{c}{a}}}
= 2\sqrt{ac}.
\end{equation*}
The inequality $\omega(-1;L) < 2\sqrt{ac}$ for large $L$ follows from a parallel arguments.
\end{proof}

To prove \thref{accelerate}, 
we denote the quantities by $\overline{H}(x,q;\tilde{c})$, 
$\Lambda(p;\tilde{c})$ and $\lambda(p;\tilde{c})$ 
to emphasize their dependence on $\tilde{c}$.

\begin{proposition}
Assume $\tilde{c}$ satisfies $<\tilde{c}> = 0$, then
$$\Lambda(p,\tilde{c}) \geq \Lambda(p,0) \quad \hbox{ and } \quad \lambda(p;\tilde{c}) \geq \lambda(p;0).$$
\end{proposition}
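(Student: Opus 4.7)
The plan is to prove both inequalities by the same averaging strategy: take a bounded Lipschitz $\delta$-approximate corrector of the equation defining the effective Hamiltonian, integrate over a ball $B(0,R)$, and send $R \to +\infty$ before letting $\delta \to 0$. The mean-zero hypothesis $\Xint-\tilde{c} = 0$, Jensen's inequality applied to the positive-definite quadratic form, and the divergence theorem applied to the second-derivative term together reduce the problem with $\tilde{c}$ to the problem with $\tilde{c}=0$.

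For $\lambda(p;\tilde{c}) \geq \lambda(p;0)$, let $u_\delta$ be a bounded Lipschitz $\delta$-approximate corrector of \eqref{zeroeigenvalues}, so that
$$\iota(a^{ij})D_{ij}u_\delta + \iota(a^{ij})(D_iu_\delta-p_i)(D_ju_\delta-p_j) + \iota(b^i)(D_iu_\delta-p_i) + \iota(c) + \tilde{c}(x) \leq \lambda(p;\tilde{c}) + \delta.$$
Integrate over $B(0,R)$, divide by $|B(0,R)|$, and send $R \to +\infty$. The second-derivative average is $O(R^{-1})$ by the divergence theorem (since $|Du_\delta|$ is bounded); the first-derivative average $\frac{1}{|B(0,R)|}\int_{B(0,R)} Du_\delta\,dx \to 0$ because $u_\delta$ is bounded at this fixed $\delta$; Jensen applied to the quadratic form $\iota(a^{ij})\xi_i\xi_j$, which is positive definite because $\iota(a^{ij})\xi_i\xi_j = \iota(a^{ij}\xi_i\xi_j) \geq \alpha_m|\xi|^2$ by \thref{characterization of iota}, yields the lower bound $\iota(a^{ij})p_ip_j + o(1)$; and the average of $\tilde{c}$ tends to the Bohr mean $\Xint-\tilde{c} = 0$. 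One therefore obtains $\iota(a^{ij})p_ip_j - \iota(b^i)p_i + \iota(c) \leq \lambda(p;\tilde{c}) + \delta$, whose left-hand side equals $\lambda(p;0)$ by the first lemma of this section; sending $\delta \to 0$ concludes.

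For $\Lambda(p;\tilde{c}) \geq \Lambda(p;0)$, I would first establish the pointwise comparison $\overline{H}(x,q;\tilde{c}) \geq \overline{H}(x,q;0) = a^{ij}(x)q_iq_j + b^i(x)q_i + c(x)$ at each $(x,q)$. Fix $(x,q)$ and apply the same averaging to a bounded Lipschitz $\delta$-approximate corrector $\psi_\delta(y)$ of \eqref{transHamilton} in the $y$-variable over $B(0,R) \subset \RR^n_y$. The three ingredients above yield $a^{ij}(x)q_iq_j + b^i(x)q_i + c(x) \leq \overline{H}(x,q;\tilde{c}) + \delta$; sending $\delta \to 0$ gives the pointwise comparison. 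The monotonicity of the effective Hamiltonian---an immediate consequence of \thref{characterization of effective Ham}: if $F_1 \leq F_2$ pointwise then $\overline{F_1} \leq \overline{F_2}$, since any $\delta$-approximate corrector of the $F_2$-problem satisfies $F_1(x,Du,D^2u) \leq F_2(x,Du,D^2u) \leq \overline{F_2}+\delta$---applied to $\overline{H}(x, Du-p;\cdot) = \Lambda$ then produces $\Lambda(p;\tilde{c}) \geq \Lambda(p;0)$.

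The main technical obstacle is justifying the divergence theorem and pointwise Jensen on the approximate correctors, which requires at least $W^{2,p}_{\text{loc}}$ regularity. This is available from standard uniformly elliptic theory: the correctors are constructed as $u_\epsilon$ (respectively $\psi_\epsilon$) solving $H(\cdot,Du,D^2u)=\epsilon u$ with smooth bounded coefficients and inherit the Bernstein-type estimates \eqref{bound} as well as those in \thref{correctorestimate}. One must also respect the order of limits---first $R \to +\infty$ at fixed $\delta$, then $\delta \to 0$---so that the bounds $\|u_\delta\|_{L^\infty}$ and $\|\psi_\delta\|_{L^\infty}$, which may degrade as $\delta \to 0$, are used only during the averaging step and never uniformly in $\delta$.
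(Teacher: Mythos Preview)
Your proposal is correct and follows essentially the same route as the paper: average a $\delta$-approximate corrector, use that the mean of $\tilde{c}$ vanishes together with the positivity of the quadratic form to obtain the pointwise inequality $\overline{H}(x,q;\tilde{c})\geq \overline{H}(x,q;0)$ (respectively the analogue for $\lambda$), and then pass to the outer effective Hamiltonian via \thref{characterization of effective Ham}. The only cosmetic difference is that the paper expands $(D_i\psi+q_i)(D_j\psi+q_j)$ and drops the nonnegative term $\Xint- a^{ij}D_i\psi D_j\psi$, whereas you invoke Jensen's inequality on the convex quadratic form; these are equivalent here.
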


\begin{proof}
For any $x \in \RR^n$ and $\delta > 0$,
let $\psi$ be a $\delta$-approximate corrector of \eqref{transHamilton}. 
Integrating the equation over $\RR^n$ and averaging yields
\begin{equation*}
\begin{aligned}
\overline{H}(x,q;\tilde{c}) + \delta 
\geq 
&\Xint{-} \sum\limits_{i,j=1}^{n}a^{ij}(x)D_{y_i}\psi(y)D_{y_j}\psi(y) dy  \\
&+ \sum\limits_{i,j=1}^{n}a^{ij}(x)q_iq_j 
+ \sum\limits_{i=1}^{n}b^i(x) q_i 
+ c(x) 
\geq \overline{H}(x,q;0).
\end{aligned}
\end{equation*}
Letting $\delta \to 0$ gives
$$\overline{H}(x,q;\tilde{c}) \geq \overline{H}(x,q;0).$$
Now suppose $\varphi$ satisfies
$$\|\overline{H}(\cdot, D\varphi - p; \tilde{c}) - \Lambda(p;\tilde{c})\| \leq \delta.$$
By \thref{characterization of effective Ham} one thus get that
$$\Lambda(p;\tilde{c}) + \delta \geq \overline{H}(x,D\varphi-p;\tilde{c}) \geq \overline{H}(x,D\varphi-p;0) \geq \Lambda(p,0)$$
for some $x \in \RR^n$.
The conclusion follows from letting $\delta \to 0$.

The proof of $\lambda(p;\cdot)$ is analogous.
\end{proof}

We will prove that the above inequalities are strict when $\tilde{c} \not \equiv 0$,
beginning with the one-dimensional case. 

\begin{theorem}\thlabel{positive when n=1}
Assume $a$ and $b$ are constants with $a > 0$,
and let $c \not \equiv 0$ is an almost periodic function with $<c> = 0$.
If $\lambda$ is the effective Hamiltonian of the following problem
\begin{equation}\label{positivity}
au'' + a(u')^2 + bu' + c = \lambda.
\end{equation}
Then $\lambda > 0$.
\end{theorem}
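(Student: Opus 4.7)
The plan is to establish $\lambda \geq 0$ by averaging, and then to rule out equality by a second-moment identity obtained by multiplying the approximate-corrector equation by $c$ itself. For the nonnegativity, pick for each $\delta > 0$ a $\delta$-approximate corrector $u_\delta$, which is bounded and Lipschitz by definition; the Bernstein-type estimate \eqref{bound} also gives $\|u_\delta'\|_{L^\infty} + \|u_\delta''\|_{L^\infty} \leq C$ uniformly in $\delta$. Integrating the two-sided inequality
\[
\lambda - \delta \;\leq\; a u_\delta'' + a(u_\delta')^2 + b u_\delta' + c \;\leq\; \lambda + \delta
\]
over $[-R, R]$ and dividing by $2R$: the boundary contributions from $a u_\delta''$ and $b u_\delta'$ vanish as $R \to \infty$ (both $u_\delta$ and $u_\delta'$ are bounded), while $\tfrac{1}{2R}\int_{-R}^R c \to \langle c \rangle = 0$ by almost periodicity. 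This pins down
\[
\lambda - \delta \;\leq\; a\,\limsup_{R\to\infty}\frac{1}{2R}\int_{-R}^R (u_\delta')^2 \;\leq\; \lambda + \delta ,
\]
yielding both $\lambda \geq 0$ and the quantitative consequence $\limsup_R \tfrac{1}{2R}\int (u_\delta')^2 \leq (\lambda+\delta)/a$.

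Suppose toward contradiction that $\lambda = 0$; then the preceding bound forces the $L^2$-density of $u_\delta'$ to be $O(\delta)$. To exploit this against $\langle c^2 \rangle > 0$, multiply the approximate-corrector equation pointwise by $c(x)$ and integrate:
\[
a\int_{-R}^R c u_\delta'' + a\int_{-R}^R c(u_\delta')^2 + b\int_{-R}^R c u_\delta' + \int_{-R}^R c^2 \;=\; O(\delta)\int_{-R}^R c .
\]
Integration by parts rewrites $\int c u_\delta'' = -\int c' u_\delta' + [c u_\delta']_{-R}^R$; after dividing by $2R$ and taking $R \to \infty$ the boundary term vanishes, the right-hand side vanishes (since $\langle c \rangle = 0$), and $\tfrac{1}{2R}\int c^2 \to \langle c^2 \rangle > 0$.

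Each of the three remaining cross integrals $\int c' u_\delta'$, $\int c(u_\delta')^2$ and $\int c u_\delta'$ is controlled by Cauchy--Schwarz or by an $L^\infty$ bound on $c$ against $\tfrac{1}{2R}\int(u_\delta')^2$; combining with the estimate $\limsup_R \tfrac{1}{2R}\int(u_\delta')^2 = O(\delta)$ from the first paragraph, these cross terms contribute at most $C\sqrt{\delta} + C\delta$. Consequently
\[
\langle c^2 \rangle \;\leq\; C\sqrt{\delta} + C\delta ,
\]
and letting $\delta \to 0$ contradicts $\langle c^2 \rangle > 0$. Combined with $\lambda \geq 0$, this forces $\lambda > 0$.

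The main technical obstacle I anticipate is bookkeeping the nested limits $R \to \infty$ followed by $\delta \to 0$: the honest limit $\lim_R \tfrac{1}{2R}\int(u_\delta')^2$ need not exist for a fixed $\delta$, so every estimate must be phrased with $\limsup$ and chained so that the residual constants remain finite and independent of $\delta$. Regularity is a minor matter — the Bernstein estimate plus Schauder theory promote $u_\delta$ to $C^{2,\gamma}(\RR)$, which legitimizes the pointwise multiplication by $c$ and the integration by parts performed above.
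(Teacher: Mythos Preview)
Your argument is correct and takes a genuinely different route from the paper's proof. The paper first rewrites the problem in terms of the family $\lambda(p)$ (with $H(x,p,X)=aX+ap^2+c(x)$), then invokes structural results from \cite{liang2022propagation,berestycki2012spreading} asserting the existence of an exact sublinear corrector with almost periodic derivative for $|p|>p_0$ and a flat plateau $\lambda(p)\equiv\lambda(p_0)$ for $|p|\le p_0$; the strict inequality on the plateau is then deduced from smoothness of $\lambda(\cdot)$, and the boundary case $p_0=0$ is delegated to \cite{gesztesy1992ground}. Your proof bypasses all of this: the averaging step gives $\lambda\ge 0$ together with the quantitative bound $\limsup_R \tfrac{1}{2R}\int(u_\delta')^2\le(\lambda+\delta)/a$, and the second-moment identity obtained by multiplying by $c$ and integrating converts the hypothesis $\langle c^2\rangle>0$ directly into a lower bound that contradicts $\lambda=0$. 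This is more elementary and entirely self-contained, at the cost of yielding less structural information about $p\mapsto\lambda(p)$; conversely, the paper's approach, while leaning on outside references, situates the result within the broader picture of the eigenvalue curve. Your bookkeeping of the nested limits (fixed $\delta$, $R\to\infty$, then $\delta\to0$) is exactly right, and the regularity needed for the pointwise manipulations is available since the approximate correctors arise from a uniformly elliptic equation with smooth coefficients.
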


\begin{proof}
There exists $p_0 \in \RR$ such that $b = 2ap$,
therefore \eqref{positivity} can be rewritten as
\begin{equation*}
au'' + a(u'+p)^2 + c = \lambda + ap^2. 
\end{equation*}	
Let $\lambda(p)$ be the effective Hamiltonian of the following problem
\begin{equation}\label{principaleigenvalue}
H(x,u'+p,u'') = \lambda(p),
\end{equation}
where
\begin{equation*}
H(x,p,X) = aX + ap^2 + c(x).
\end{equation*}
It is therefore equivalent to prove that $\lambda(p) > ap^2$ for all $p \in \RR$.

According to \cite{liang2022propagation,berestycki2012spreading},
there exists $p_0 \geq 0$ such that for any $|p| > p_0$,
\eqref{principaleigenvalue} admits a sub-linear solution $u$ with $u'$ almost periodic,
and for $|p| \leq p_0$, $\overline\lambda(p) \equiv\overline \lambda(p_0)$.
When $|p| > p_0$, 
let $u$ be the sub-linear solution.
Integrating \eqref{principaleigenvalue} and averaging yields
\begin{equation*}
\lambda =  a <(u')^2>. 
\end{equation*}
Since $c$ is almost periodic and non-trivial,
it follows that $u' \not\equiv 0$.
The almost periodicity of $u'$ then implies $\lambda > 0$.

If $p_0 > 0$,
we have
$\lambda(p) \equiv \lambda(p_0) \geq ap_0^2 > ap^2$
for all $|p| < p_0$.
Furthermore,
since $\lambda(p)$ is smooth by \thref{smoothness of the effective Hamiltonian},
we have $\lambda(p_0) > ap_0^2$.

If $p_0 = 0$, 
by \cite{markus1956oscillation},
we have 
\begin{equation*}
\begin{aligned}
[\lambda(0), +\infty) = \{\lambda : &\hbox{ every non-trivial solution of }\\ &(au')' + c(x)u = \lambda u 
\hbox{ has at most one zero on } \RR \}.
\end{aligned}
\end{equation*}
Since this set is a proper subset of $\RR_+$,
it follows that $\lambda(0) > 0$.
\end{proof}

For the case of higher dimension,
we have

\begin{theorem}\thlabel{heterogenerous}
Let $A = (a^{ij})_{i,j=1}^n$ be a constant positive definite matrix
and $B = (b^i)_{i=1}^n$ a constant vector.
Suppose $c \not \equiv 0$ is almost periodic with $<c> = 0$
and let $\lambda$ be the effective Hamiltonian of the following problem
\begin{equation}\label{positiveeigenvalue}
\sum\limits_{i,j=1}^{n}a^{ij}D_{ij}u 
+ \sum\limits_{i,j=1}^{n}a^{ij}D_iuD_ju 
+ \sum\limits_{i=1}^{n}b^iD_iu + c = \lambda.
\end{equation}
Then $\lambda > 0$.
\end{theorem}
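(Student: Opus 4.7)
The plan is to use a $\delta$-approximate corrector $u_\delta$ of \eqref{positiveeigenvalue} and exploit the zero-mean condition $\langle c\rangle=0$ through two successive averagings; the constancy of $A$ and $B$ is crucial for eliminating boundary and commutator obstructions.

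For each $\delta\in(0,1)$ take $u_\delta\in C^{2,\gamma}(\RR^n)$ bounded and Lipschitz with $\lambda-\delta\leq a^{ij}D_{ij}u_\delta+a^{ij}D_iu_\delta D_ju_\delta+b^iD_iu_\delta+c\leq\lambda+\delta$. By Bernstein's method and \eqref{bound}, $\|Du_\delta\|$ and $\|D^2u_\delta\|$ are bounded uniformly in $\delta$ (although $\|u_\delta\|$ may blow up as $\delta\to 0^+$). Averaging on $B_R$: since $A,B$ are constant, $\Xint{-}_{B_R} a^{ij}D_{ij}u_\delta = O(R^{-1})$ and $\Xint{-}_{B_R} b^iD_iu_\delta = O(\|u_\delta\|/R)$, both $o_R(1)$ for each fixed $\delta$, while $\Xint{-}_{B_R} c \to \langle c\rangle = 0$. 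Thus $\limsup_R \Xint{-}_{B_R} a^{ij}D_iu_\delta D_ju_\delta \in [\lambda-\delta,\lambda+\delta]$, and non-negativity of the quadratic form forces $\lambda\geq 0$.

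Suppose for contradiction that $\lambda = 0$; then $\limsup_R \Xint{-}_{B_R} |Du_\delta|^2 \leq \delta/\alpha_m$. Multiplying the corrector inequality by $c$ and averaging on $B_R$, and using that $A$ is constant to integrate by parts cleanly in the term $c\,a^{ij}D_{ij}u_\delta$, the passage $R\to\infty$ combined with Cauchy--Schwarz and the $L^2$-bound on $Du_\delta$ yields
\[
\Bigl|\Xint{-} a^{ij}D_jc\,D_iu_\delta\Bigr|,\ \Bigl|\Xint{-} c\,b^iD_iu_\delta\Bigr| \leq C\sqrt{\delta},\quad \Bigl|\Xint{-} c\,a^{ij}D_iu_\delta D_ju_\delta\Bigr|\leq C\delta.
\]
Together with $\Xint{-}_{B_R}c^2\to\langle c^2\rangle$ and $\lambda\,\Xint{-}_{B_R}c\to 0$, this gives $\langle c^2\rangle\leq C\sqrt{\delta}\to 0$. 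But $c\neq 0$ almost periodic forces $\langle c^2\rangle > 0$, a contradiction, so $\lambda > 0$.

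The main obstacle is the drift term $\Xint{-}c\,b^iD_iu_\delta$ appearing in the second averaging. Integrating by parts to transfer the derivative onto $c$ would produce $-\Xint{-} u_\delta b^iD_ic$, whose mean is not controllable uniformly in $\delta$ because $\|u_\delta\|$ may diverge as $\delta\to 0^+$. The remedy is to skip this integration by parts and apply Cauchy--Schwarz directly, giving $|\Xint{-}_{B_R} c\,b^iD_iu_\delta|\leq \|c\|\,|B|\,(\Xint{-}_{B_R}|Du_\delta|^2)^{1/2}=O(\sqrt{\delta})$, which uses only the $L^2$-mean control on $Du_\delta$ established in the first averaging.
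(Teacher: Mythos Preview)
Your argument is correct and takes a genuinely different route from the paper's. The paper proceeds structurally: it first shows that $\lambda=\lambda(c)$ is convex in $c$, then uses translation invariance $\lambda(c(\cdot))=\lambda(c(\cdot+y))$ together with a Riemann-sum approximation to average out all but one spatial variable, thereby reducing to the one-dimensional case (\thref{positive when n=1}). That one-dimensional result in turn leans on the spectral structure of almost periodic Schr\"odinger-type operators from \cite{liang2022propagation,berestycki2012spreading} and, in a boundary case, on \cite{gesztesy1992ground}. Your approach, by contrast, is a direct energy argument on the approximate corrector: one averaging yields $\lambda\geq 0$ and the $L^2$-mean bound $\limsup_R\Xint{-}_{B_R}|Du_\delta|^2\leq \delta/\alpha_m$ under the hypothesis $\lambda=0$; the second averaging against $c$ then forces $\langle c^2\rangle=O(\sqrt\delta)$. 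This is more elementary and entirely self-contained, bypassing both the convexity/dimension-reduction machinery and the external spectral results. The trade-off is that your integration by parts in $\Xint{-}_{B_R}c\,a^{ij}D_{ij}u_\delta$ requires $c\in C^1$ with bounded gradient, which is covered by the paper's standing regularity hypotheses in Section~2 but is not stated in \thref{heterogenerous} itself; the paper's route needs only continuity of $c$. Conversely, the paper's convexity-plus-reduction idea is a reusable device that could apply to other functionals of $c$, whereas your energy method is tailored to this specific positivity question.
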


\begin{proof}
Write $\lambda = \lambda(c)$ to emphasize the dependence of $\lambda$ on $c$.
We will prove that $\lambda$ is convex in $c$.
In fact,
for any $r \in (0,1)$ and almost periodic functions $c_0, c_1$,
let $u_0,u_r,u_1$ be the unique bounded and uniformly continuous solutions of
\begin{equation*}
\sum\limits_{i,j=1}^{n}a^{ij}D_{ij}u_s 
+ \sum\limits_{i,j=1}^{n}a^{ij}D_iu_sD_ju_s 
+ \sum\limits_{i=1}^{n}b^iD_iu_s + c_s = \epsilon u_s, \quad
s=0,r,1
\end{equation*} 
respectively,
where $\epsilon > 0$
and $c_r = rc_0 + (1-r)c_1$.
Let $v = ru_0 + (1-r)u_1$,
then we have
\begin{equation*}
\begin{aligned}
&\sum\limits_{i,j=1}^{n}a^{ij}D_{ij}v 
+ \sum\limits_{i,j=1}^{n}a^{ij}D_ivD_jv 
+ \sum\limits_{i=1}^{n}b^iD_iv + c_r \\
&= r \epsilon u_0 + (1-r)\epsilon u_1 
- r(1-r)\sum\limits_{i,j=1}^{n}a^{ij}D_i(u_0-u_1)D_j(u_0-u_1)
\end{aligned}
\end{equation*}
Letting $\epsilon \to 0$ and applying \thref{characterization of effective Ham} yields
\begin{equation*}
\lambda(rc_0 + (1-r)c_1) \leq r \lambda(c_0) + (1-r)\lambda(c_1).
\end{equation*}

Now since $c$ is almost periodic with $<c> = 0$,
there exists a direction,
without loss of generality, take $e_1$,
such that
\begin{equation*}
\overline{c}(x_1) = \Xint{-}c(x_1,x_2,\cdots,x_n)dx_2\cdots dx_n 
\end{equation*}
is non-trivial with $<\overline{c}> = 0$.
This claim is established in \thref{directionselect}.

Now for any $\epsilon > 0$,
there exists $R > 0$ such that
\begin{equation*}
\|\overline{c}(x_1) - \frac{1}{(2R)^{n-1}}\int_{-R}^{R}\cdots \int_{-R}^{R}c(x_1,x_2,\cdots,x_n)dx_2 \cdots dx_n\| \leq \epsilon.
\end{equation*}
Note that Riemann integral could be approximated by Riemann sum,
that is,
\begin{equation*}
\begin{aligned}
\sum\limits_{i_2,\cdots, i_n=-m}^{m-1}c(x_1,x_2 + \frac{i_2R}{m},\cdots,\frac{i_nR}{m})(\frac{1}{2m})^{n-1} \to \frac{1}{(2R)^{n-1}}\int_{-R}^{R}\cdots \int_{-R}^{R}c(x_1,x_2,\cdots,x_n)dx_2 \cdots dx_n
\end{aligned}
\end{equation*}
as $m \to \infty$.
Furthermore,
since for any $y \in \RR^n$,
$\lambda(c(\cdot)) = \lambda (c(\cdot + y))$, 
and $\lambda$ is convex in $c$,
we have
\begin{equation*}
\lambda(c) \geq \lambda(\sum\limits_{i_2=-m}^{m-1}\cdots \sum\limits_{i_n=-m}^{m-1}c(x_1,x_2 + \frac{i_2R}{m},\cdots,\frac{i_nR}{m})(\frac{1}{2m})^{n-1}).
\end{equation*}
Letting $m \to \infty$ gives
\begin{equation*}
\lambda(c) \geq \lambda(\frac{1}{(2R)^{n-1}}\int_{-R}^{R}\cdots \int_{-R}^{R}c(x_1,x_2,\cdots,x_n)dx_2 \cdots dx_n),
\end{equation*}
which implies that
\begin{equation*}
\lambda(c) \geq \lambda(\overline{c}) - \epsilon.
\end{equation*}
Taking $\epsilon \to 0$ yields
\begin{equation*}
\lambda(c) \geq \lambda(\overline{c}).
\end{equation*}
For $\lambda(\overline{c})$,
the problem \eqref{positiveeigenvalue} reduces to
\begin{equation*}
a^{11}D_{11}u + a^{11}D_1uD_1u + b^1D_1u + \overline{c}(x_1) = \lambda(\overline{c}),
\end{equation*}
whose positivity is guaranteed by \thref{positive when n=1}.
\end{proof}

\begin{lemma}\thlabel{directionselect}
Assume $c$ is an almost periodic function with $<c> = 0$,
then there exists a direction $e \in \mathbb{S}^{n-1}$ such that
the function
\begin{equation*}
h(z) := \Xint{-}_{\{e \cdot x = z\}}c(x)dx
\end{equation*}
is a nontrivial almost periodic function on $\RR$ with $<h> = 0$.
\end{lemma}

\begin{proof}
Define
\begin{equation*}
c_\lambda :=  \Xint{-}c(x)e^{-i \lambda \cdot x}dx
\end{equation*}
and put $C := \{\lambda \in \RR^n : c_\lambda \neq 0\}$.
Then by \cite{bochner1935almost}, $C$ is at most countable and we will associate with $c$ a series,
write
\begin{equation*}
c \sim \sum_{\lambda \in C} c_\lambda e^{i \lambda \cdot x}. 
\end{equation*}
The numbers $c_\lambda$ are the Fourier coefficients of $c$ and $\lambda \in C$ the Fourier exponents.

Now,
let us fixed $e$ and consider the function
\begin{equation*}
h(z) = \Xint{-}_{\{e \cdot x = z\}}c(x)dx.
\end{equation*}
Then
for any $\xi \in \RR$,
\begin{equation*}
\begin{aligned}
h_\xi := \Xint{-}h(z)e^{-i \xi z}dz
&= \lim\limits_{R \to \infty}\int_{-R}^{R}\Xint{-}_{e \cdot x = z}c(x)dSe^{-i \xi z}dz \\
&= \Xint{-}c(x) e^{-i \xi e \cdot x}dx = c_{\xi e},
\end{aligned}
\end{equation*}
where we have used Fubini's theorem.
Therefore,
we have
\begin{equation*}
h(z) \sim \sum\limits_{\lambda \in \Lambda(e)} c_\lambda e^{i (\lambda \cdot e)z},
\end{equation*}
where 
\begin{equation*}
\Lambda(e) = \{\lambda \in \RR^n : \lambda = t e \hbox{ for some } t \in \RR\}.
\end{equation*}
Therefore we only need to select $e = \lambda/\|\lambda\|$ for some $\lambda \in C$ (note $C$ is nonempty and $0 \notin C$ by assumption),
then the corresponding $h(z)$ is nonzero.
The almost periodicity and mean-zero property of $h$ are inherited from $c$. 
\end{proof}

As a conclusion, 
we have
\begin{theorem}
Assume $\tilde{c} \not \equiv 0$ satisfies $<\tilde{c}> = 0$, then
\begin{equation*}
\omega(e;+\infty,\tilde{c}) > \omega(e;+\infty,0), \quad 
\omega(e;0,\tilde{c}) > \omega(e;0,0).
\end{equation*}
\end{theorem}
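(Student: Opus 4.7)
My plan is to reduce the statement to the two strict pointwise inequalities $\Lambda(p;\tilde{c}) > \Lambda(p;0)$ and $\lambda(p;\tilde{c}) > \lambda(p;0)$ for every $p \in \RR^n$, and to obtain each of these by a single application of \thref{heterogenerous} after an algebraic rewriting of the corresponding cell problem. The reduction is routine: as in the proof of \thref{limit} together with \thref{boundedness of p}, the quadratic lower bound \eqref{eigenvalueestimate} (which persists in the limits $L \to 0, +\infty$) forces the infima defining $\omega(e;+\infty,\cdot)$ and $\omega(e;0,\cdot)$ to be attained at some $p^\star$ with $p^\star \cdot e$ bounded away from $0$ and $|p^\star|$ bounded above; evaluating the strict pointwise inequality at that $p^\star$ then yields the desired strict comparison of speeds.

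For the case $L \to 0^+$, I expand the quadratic term in the cell problem \eqref{zeroeigenvalues} defining $\lambda(p;\tilde{c})$ and use the formula $\lambda(p;0) = \iota(a^{ij})p_i p_j - \iota(b^i)p_i + \iota(c)$ from the lemma of Section~6 to move the $p$-dependent constants to the right-hand side. This rewrites \eqref{zeroeigenvalues} as
\[
\iota(a^{ij})D_{ij}u + \iota(a^{ij})D_iu\,D_ju + \bigl[\iota(b^i) - 2\iota(a^{ij})p_j\bigr]D_iu + \tilde{c}(x) = \lambda(p;\tilde{c}) - \lambda(p;0).
\]
The matrix $\bigl(\iota(a^{ij})\bigr)$ is constant and positive definite (by \thref{characterization of iota} applied to the quadratic form $a^{ij}\xi_i\xi_j \geq \alpha_m$), $\bigl(\iota(b^i) - 2\iota(a^{ij})p_j\bigr)$ is a constant vector, and $\tilde{c}$ is almost periodic with $\langle\tilde{c}\rangle = 0$ and $\tilde{c} \not\equiv 0$. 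Hence \thref{heterogenerous} with $\tilde{c}$ in the role of $c$ gives $\lambda(p;\tilde{c}) - \lambda(p;0) > 0$.

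For the case $L \to +\infty$, I first apply the same expansion to the inner cell problem \eqref{transHamilton}: recalling from the first lemma of Section~6 that $\overline{H}(x,q;0) = a^{ij}(x)q_iq_j + b^i(x)q_i + c(x)$, for each fixed $(x,q)$ the difference $\overline{H}(x,q;\tilde{c}) - \overline{H}(x,q;0)$ is the effective Hamiltonian in $y$ of
\[
a^{ij}(x)D_{y_iy_j}\phi + a^{ij}(x)D_{y_i}\phi D_{y_j}\phi + \bigl[b^i(x) + 2a^{ij}(x)q_j\bigr]D_{y_i}\phi + \tilde{c}(y) = \mu,
\]
whose coefficients are constant in $y$. \thref{heterogenerous} then yields $\mu > 0$, so $\overline{H}(x,q;\tilde{c}) > \overline{H}(x,q;0)$ pointwise. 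To upgrade this to $\Lambda(p;\tilde{c}) > \Lambda(p;0)$, I fix $p$, take a $\delta$-approximate corrector $v$ of $\overline{H}(\cdot,Dv-p;0) = \Lambda(p;0)$ (so that $Dv - p$ is bounded), and apply \thref{characterization of effective Ham} to obtain
\[
\Lambda(p;\tilde{c}) \geq \inf_{x \in \RR^n}\overline{H}(x,Dv-p;\tilde{c}) \geq \inf_{x \in \RR^n}\overline{H}(x,Dv-p;0) + \eta \geq \Lambda(p;0) - \delta + \eta,
\]
where $\eta > 0$ is a uniform lower bound on the pointwise gap; letting $\delta \to 0$ finishes the step.

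The main obstacle is establishing the uniform constant $\eta$, since \thref{heterogenerous} only supplies pointwise positivity. My plan is a compactness argument: the pair $(A(x),B(x)+2A(x)q)$ ranges over a compact subset of $\RR^{n\times n} \times \RR^n$ as $x \in \RR^n$ and $q$ varies in the bounded set $\{Dv(x) - p : x \in \RR^n\}$, by almost periodicity of $A, B$ and the Bernstein-type Lipschitz bound on $v$. The effective Hamiltonian produced by \thref{heterogenerous} depends continuously on its constant coefficients---a standard perturbation argument on approximate correctors, modeled on the proof of \thref{smoothness of the effective Hamiltonian} and the estimate \eqref{Lipschitz in x}---so this strictly positive quantity attains a positive minimum on the compact set, which supplies $\eta$.
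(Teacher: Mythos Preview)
Your proposal is correct and follows essentially the same approach as the paper: for the $L\to 0^+$ case both you and the paper obtain $\lambda(p;\tilde{c})>\lambda(p;0)$ by a direct application of \thref{heterogenerous} to the constant-coefficient problem \eqref{zeroeigenvalues}, and for the $L\to+\infty$ case both arguments first get the pointwise gap $\overline{H}(x,q;\tilde{c})>\overline{H}(x,q;0)$ from \thref{heterogenerous} and then upgrade it to a uniform gap on $\RR^n\times B(0,R)$ by compactness. The only cosmetic difference is that the paper runs the compactness step as a sequential contradiction argument (extracting limits $A(x_n)\to\overline{A}$, $B(x_n)\to\overline{B}$, $c(x_n)\to\overline{c}$ and invoking \thref{heterogenerous} at the limit), whereas you phrase it as continuity of the effective Hamiltonian in its constant coefficients over a compact parameter set; these are equivalent formulations of the same idea.
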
 

\begin{proof}
The latter inequality follows from the fact $\lambda(p;\tilde{c}) > \lambda(p;0)$ for all $p \in \RR^n$,
which is a direct consequence of \thref{heterogenerous}.
For the first one,
we only need to show that for any $R > 0$,
there exists $\epsilon > 0$ such that $\overline{H}(x,p;\tilde{c}) > \overline{H}(x,p;0) + \epsilon$ for $(x,p) \in \RR^n \times B(0,R)$.

Note that for any $(x,p) \in \RR^n \times B(0,R)$,
we have $\overline{H}(x,p;\tilde{c}) > \overline{H}(x,p;0)$ by \thref{heterogenerous}.
Assume by contradiction that there exists $\{x_n\}_{n=1}^{\infty} \subset \RR^n$ and 
$\{p_n\}_{n=1}^{\infty} \subset B(0,R)$ such that
\begin{equation*}
\lim\limits_{n \to \infty}(\overline{H}(x_n,p_n;\tilde{c}) - \overline{H}(x_n,p_n;0)) = 0.
\end{equation*}
Since $A,B$ and $c$ are almost periodic,
without loss of generality,
suppose that $A(x_n) \to \overline{A}$,
$B(x_n) \to \overline{B}$
and $c(x_n) \to \overline{c}$ as $n \to \infty$.
Assume $p_n \to p$ as $n \to \infty$,
then $\lim\limits_{n \to \infty}\overline{H}(x_n,p_n;\tilde{c})$ is the effective Hamiltonian of
\begin{equation*}
\sum\limits_{i,j=1}^{n}\overline{a}^{ij}D_{ij}u(y) 
+ \sum\limits_{i,j=1}^{n}\overline{a}^{ij}(D_iu(y)+p_i)(D_ju(y)+p_j) 
+ \sum\limits_{i=1}^{n}\overline{b}^i(D_iu(y)+p_i)+\overline{c} + \tilde{c}(y) = \lambda,
\end{equation*}
and $\lim\limits_{n \to \infty}\overline{H}(x_n,p_n;0) = \overline{a}^{ij}p_ip_j + \overline{b}^ip_i + \overline{c}$.
By \thref{heterogenerous},
we have
\begin{equation*}
\lim\limits_{n \to \infty}\overline{H}(x_n,p_n;\tilde{c}) > \lim\limits_{n \to \infty}\overline{H}(x_n,p_n;0),
\end{equation*}
a contradiction.
\end{proof}


\bibliographystyle{plain}
\bibliography{reference.bib}
\clearpage
\end{document}